\documentclass[12pt]{amsart} 

\usepackage{amsmath,amssymb}
\usepackage{amsthm,amscd}

\newtheorem{theorem}{Theorem}[section]
\newtheorem{cor}[theorem]{Corollary}
\newtheorem{lem}[theorem]{Lemma}
\newtheorem{pro}[theorem]{Proposition}
\newtheorem{rem}[theorem]{Remark }
\newtheorem{ex}[theorem]{Example }

\begin{document}

\title[Green functions and weights]{Green functions and weights
       of polynomial skew products on $\mathbb{C}^2$}
\author[K. Ueno]{Kohei Ueno}
\address{Toba National College of Maritime Technology, 
         Mie 517-8501, Japan}
\curraddr{}
\email{kueno@toba-cmt.ac.jp}
\urladdr{}
\dedicatory{}
\date{}
\thanks{}
\translator{}
\keywords{Complex dynamics, skew product, Green function}
\subjclass[2010]{32H50 (primary), 30D05 (secondary)}

\begin{abstract}
We study the dynamics of polynomial skew products on $\mathbb{C}^2$.
By using suitable weights, 
we prove the existence of several types of Green functions. 
Largely, 
continuity and plurisubharmonicity follow.
Moreover, 
it relates to the dynamics of the rational extensions 
to weighted projective spaces.
\end{abstract}

\maketitle

\section{Introduction}

The Green function $G_f$ of $f$ is a powerful tool
for the study of the dynamics of a polynomial map $f$ 
from $\mathbb{C}^{2}$ to $\mathbb{C}^{2}$,
which is defined as
\[
G_f(z,w) = \lim_{n \to \infty} \frac{1}{{\lambda}^n} \log^{+} |f^n(z,w)|,
\]
where $f^n$ denotes the $n$-th iterate of $f$,
$\lambda$ is the dynamical degree of $f$,
and $|(z,w)| = \max \{ |z|, |w| \}$.
In one dimension, it is well known 
that the Green function $G_p$ of a polynomial $p$ is defined,
continuous and subharmonic on $\mathbb{C}$ and 
that it coincides with the Green function of $K_p$ with pole at infinity,
where $K_p$ is the set of points whose orbits under $p$ are bounded.
However, in two dimension, it is not known 
whether the limit $G_f$ is defined on $\mathbb{C}^{2}$. 
One strategy to study the dynamics in two dimension is
to assume that $f$ has a good property that induces the existence of $G_f$.
For example, 
it is well known that if $f$ is regular, that is, 
if $f$ extends to a holomorphic map on the projective space $\mathbb{P}^{2}$,
then $G_f$ is defined,
continuous and plurisubharmonic on $\mathbb{C}^2$.
Moreover, it coincides with the pluricomplex Green function of $K_f$,
where $K_f$ is the set of points whose orbits under $f$ are bounded
(see e.g. \cite{bj}). 
To relax this regularity condition,
many authors have paid attention to 
another condition called algebraically stability
(e.g. \cite{fs}, \cite{g}, \cite{dg}, \cite{ddg1}, \cite{ddg2} and \cite{ddg3}).
Recently, 
Favre and Jonsson~\cite{fj} proved that any polynomial map has an extension 
that is algebraically stable in some weak sense, 
to a projective compactification of $\mathbb{C}^{2}$ 
with at worst quotient singularities. 
Their proof is based on the valuative techniques developed in \cite{fj-ev},
with which they proved that if $f$ is not conjugate to a skew product, 
then there is a plurisubharmonic function $u$,
which is close to $\log^{+} |(z,w)|$,
such that $\lambda^{-n} u \circ f^n$
decreases to a plurisubharmonic function, which is not identically zero.
Here $\circ$ denotes the composition. 
The distinctiveness of polynomial skew products  
was pointed out at numerous instances in their papers.

In this paper 
we study the dynamics of polynomial skew products on $\mathbb{C}^2$.
A polynomial skew product is a polynomial map of the form
$f(z,w) = (p(z),q(z,w))$.
We assume that $\delta = \deg p \geq 2$ and $d = \deg_w q \geq 2$.
Let $b(z)$ be the coefficient of $w^d$ and $\gamma$ its degree. 
Many authors have studied the dynamics of regular polynomial skew products
(e.g. \cite{h}, \cite{h2}, \cite{j} and \cite{dh});
$f$ is regular if and only if $\delta = d = \deg q$,
which implies $\gamma = 0$.
We have studied the dynamics of 
nondegenerate polynomial skew products in \cite{u-weight};
we say that $f$ is nondegenerate if $\gamma = 0$.
Besides giving examples of polynomial skew products
whose Green functions are not defined on some curves,
we introduced the weighted Green function $G_f^{\alpha}$ of $f$ in \cite{u-weight},
\[
G_f^{\alpha} (z,w) = \lim_{n \to \infty} 
\frac{1}{{\lambda}^n} \log^{+} |f^n(z,w)|_{\alpha},
\]
where $|(z,w)|_{\alpha} = \max \{ |z|^{\alpha},|w| \}$,
which is defined, continuous and plurisubharmonic on $\mathbb{C}^2$
for a suitable rational number $\alpha \geq 0$.
Moreover, $f$ extends to an algebraically stable map 
on a weighted projective space,
whose dynamics relates to $G_f^{\alpha}$.
Although the dynamics becomes much more difficult 
without the nondegeneracy condition,
the idea of imposing suitable weights is still effective. 

The dynamics of $f$ consists of 
the dynamics of $p$ on the base space and the dynamics on fibers:
$f^n(z,w) = (p^n(z),Q_z^n(w))$, where
$Q_z^n = q_{p^{n-1}(z)} \circ \cdots \circ q_{p(z)} \circ q_{z}$
and $q_z(w) = q(z,w)$.
As the Green function $G_p$ of $p$,
we define the fiberwise Green function of $f$ as
\[
G_z (w) = \lim_{n \to \infty} \frac{1}{d^n} \log^{+} | Q_z^n(w) |.
\]
Since $G_p$ exists on $\mathbb{C}$,
the existence of $G_z$ implies the existence of $G_f$ and $G_f^{\alpha}$. 
Favre and Guedj proved the existence of $G_z$ on $K_p \times \mathbb{C}$
in \cite[Theorem 6.1]{fg},
which is continuous and plurisubharmonic if $b^{-1}(0) \cap K_p = \emptyset$,
and gave examples 
whose fiberwise Green functions are discontinuous over $J_p = \partial K_p$
in \cite[Proposition 6.5]{fg}.
In this paper, we investigate the existence, 
continuity and plurisubharmonicity of $G_z$ on $A_p \times \mathbb{C}$,
where $A_p$ denotes the set of points whose orbits under $p$ tend to infinity.

A summary of our results is as follows.
We replace the definition of $|(z,w)|_{\alpha}$
by $\max \{ |z|^{max \{ \alpha, 0 \}},|w| \}$
or $\max \{ (|z|+1)^{\alpha},|w| \}$.
In the case $\delta \neq d$,
the weighted Green function $G_f^{\alpha}$ is defined on $\mathbb{C}^2$ 
and continuous and plurisubharmonic on $A_p \times \mathbb{C}$
for a suitable rational number $\alpha$, 
which can be negative if $\delta < d$.
If $\delta > d$, then $\alpha$ is nonnegative,
$\limsup_{n \to \infty} \delta^{-n} \log^{+} |Q_z^n| \leq \alpha G_p$
and so $G_f^{\alpha} = \alpha G_p$ on $\mathbb{C}^2$.
Moreover,
if $\alpha = \gamma /(\delta - d)$,
then the limit 
\[
G_z^{\alpha} (w) = \lim_{n \to \infty} \frac{1}{d^n} \log^{+} 
         \left| \dfrac{Q_z^n(w)}{p^n(z)^{\alpha}} \right| 
\]
is defined, continuous and plurisubharmonic on $A_p \times \mathbb{C}$.
If $\delta < d$, 
then $G_z$ coincides with $G_f$ and $G_f^{\alpha}$ on $\mathbb{C}^2$,
and with $G_z^{\alpha}$ on $A_p \times \mathbb{C}$.
Moreover,
if $\delta \neq d$ and $\alpha = \gamma /(\delta - d)$,
then we obtain certain uniform convergence to $G_z^{\alpha}$ 
on $A_p \times \mathbb{C}$
and the asymptotics of $G_z^{\alpha}$ near infinity.
In the case $\delta = d$, the dynamics differs
depending on whether $f$ is nondegenerate.
If $\gamma \neq 0$, then $G_f^{\alpha}$ is defined on $\mathbb{C}^2$
if we admit plus infinity
for a suitable rational number $\alpha$, which can be negative. 
Moreover, the limit
$\lim_{n \to \infty} (n \gamma d^{n-1})^{-1} \log^{+} |Q_z^n(w)|$
is defined and plurisubharmonic on $\mathbb{C}^2$. 
It coincides with $G_p$ on a region in $A_p \times \mathbb{C}$ 
and with $0$ on the complement of the region in $A_p \times \mathbb{C}$.
Furthermore, the limit
\[
G(z,w) = \lim_{n \to \infty} \frac{1}{d^n} \log 
         \left| \dfrac{Q_z^n(w)}{p^n(z)^{n \frac{\gamma}{d}}} \right| 
\]
is defined and plurisubharmonic on $A_p \times \mathbb{C}$
if we admit minus infinity.
It is continuous on the region in $A_p \times \mathbb{C}$. 

Throughout the paper
we make use of the extensions of $f$
to rational maps on weighted projective spaces,
which can be algebraically stable 
if and only if $\gamma = 0$ or $\delta > d$.
Some results on the Green functions 
can be illustrated with these extensions.
In particular,
if $\delta > d$ and $\alpha = \gamma /(\delta - d)$
or if $\delta \leq d$,
then $f \sim (z^{\delta}, z^{\gamma} w^d)$ on a region
in the attracting basin of the indeterminacy point $[0:1:0]$,
which induces the results on the Green functions.
Here the notation $f \sim g$ means that 
the ratios of the first and second components
of the maps tend to $1$, respectively.
Note that  
Guedj pointed out in \cite[Example 3.2]{g} 
that $f$ extends to algebraically stable maps 
on Hirzebruch surfaces if $\delta \leq d$.
Under an additional condition, 
he derived the existence of a weighted Green function of $f$
in \cite[Theorem 4.1]{g},
which is continuous and plurisubharmonic on $\mathbb{C}^2$. 

This paper is organized as follows.   
First, 
we briefly recall the dynamics of polynomial skew products
and give the definition of rational extensions on weighted projective spaces
in Section 2.
In Section 3 we recall the results for the nondegenerate case.
The generalized results 
in the cases $\delta > d$, $\delta < d$ and $\delta = d$
are presented in Sections 4, 5 and 6, respectively.
In these sections we offer two examples:
monomial maps, and skew products that are semiconjugate to polynomial products.
The weight $\alpha$ contributes also to the degree growth of $f$;
in Section 7 we provide a result on the degree growth
and a corollary on a weighted Green function that relates to the degree growth.

\section{Preliminaries}

Let $f(z,w)=(p(z),q(z,w))$ be 
a polynomial skew product such that   
$p(z) = a z^{\delta} + O( z^{\delta - 1} )$ and 
$q(z,w) = b(z) w^{d} + O_z( w^{d - 1} )$,
and let $\gamma = \deg b$.
We assume that $\delta \geq 2$ and $d \geq 2$.
Then we may assume that polynomials $p$ and $b$ are monic
by taking an affine conjugate;
$p(z) = z^{\delta} + O( z^{\delta - 1} )$ and
$b(z) = z^{\gamma} + O( z^{\gamma - 1} )$.
Let $\lambda = \max \{ \delta, d \}$ and 
$\deg f$ be the algebraic degree of $f$,
i.e. $\max \{ \deg p, \deg q \}$.
In general, $\deg f$ may be greater than $\lambda$.
However, the dynamical degree of $f$, 
$\lim_{n \to \infty} \sqrt[n]{\deg (f^n)}$, 
is equal to $\lambda$.

Let us briefly recall  
the dynamics of polynomial skew products.
Roughly speaking, 
the dynamics of $f$ consists of 
the dynamics on the base space and the dynamics on the fibers.
The first component $p$ defines 
the dynamics on the base space $\mathbb{C}$.
Note that $f$ preserves the set of vertical lines in $\mathbb{C}^{2}$.
For this reason, 
we often use the notation $q_z (w)$ instead of $q(z,w)$.
The restriction of $f^n$ to the vertical line $\{ z \} \times \mathbb{C}$
can be viewed as the composition of $n$ polynomials on $\mathbb{C}$, 
$q_{p^{n-1}(z)} \circ \cdots \circ q_{p(z)} \circ q_{z}$.

A useful tool in the study of the dynamics of $p$ on the base space 
is the Green function $G_p$ of $p$, 
\[
G_p(z) = \lim_{n \to \infty} \frac{1}{\delta^n} \log^{+} |p^n(z)|.
\]
It is well known that $G_p$ is defined, 
continuous and subharmonic on $\mathbb{C}$.
More precisely, 
$G_p$ is harmonic and positive on $A_p$ and zero on $K_p$, 
and $G_p(z)= \log |z| + o(1)$ as $z \to \infty$. 
Here $A_p = \{ z :  p^n (z) \to \infty \}$,
$K_p = \{ z : \{ p^n (z) \}_{n \geq 1} \ \text{bounded} \}$
and $A_p \sqcup K_p = \mathbb{C}$.
By definition, $G_p(p(z))= \delta G_p(z)$.
In a similar fashion, 
we consider the fiberwise Green function of $f$, 
\[
G_z(w) = \lim_{n \to \infty} \frac{1}{d^n} \log^{+} |Q_z^n(w)|
\text{ or }
G_z^{\lambda} (w) = \lim_{n \to \infty} \frac{1}{{\lambda}^n} \log^{+} | Q_z^n(w) |,
\]
where 
$Q_z^n  = q_{p^{n-1}(z)} \circ \cdots \circ q_{p(z)} \circ q_{z}$.
By definition, $G_{p(z)}(q_z(w)) = d G_z(w)$ if it exists.
Since the limit $G_p$ exists on $\mathbb{C}$,
the existence of $G_z$ or $G_z^{\lambda}$ implies those of $G_f$ and $G_f^{\alpha}$.
Since the existence of $G_z$ 
on $K_p \times \mathbb{C}$ is proved in \cite{fg},
the remaining problem lies in investigating 
the existence on $A_p \times \mathbb{C}$.
We note that it is unclear even if $f$ is regular. 
For the nondegenerate case,
using an argument in the proof of \cite[Theorem 6.1]{fg},
the author showed the existence of $G_z$ 
on an open subset of $A_p \times \mathbb{C}$ in \cite[Lemma 2.3]{u-sym}
with the assumption $\delta \leq d$,
which was improved in \cite{u-weight}.
In this paper we generalize these results for the case $\gamma \neq 0$.

It is useful to consider the dynamics of the rational extensions of $f$. 
Let $r$ and $s$ be any two positive integers.
The weighted projective space $\mathbb{P} (r,s,1)$ is 
a quotient space of $\mathbb{C}^{3} - \{ O \}$,
\[
\mathbb{P} (r,s,1) = \mathbb{C}^{3} - \{ O \} \ \big/ \sim,
\]
where $(z, w, t) \sim (c^r z, c^s w, c t)$
for any $c$ in $\mathbb{C} - \{ 0 \}$. 
Thus $\mathbb{P} (r,s,1) = \mathbb{C}^{2} \sqcup L_{\infty}$,
where $L_{\infty}$ denotes the line at infinity $\{ t = 0 \}$.
We denote a point in $\mathbb{P} (r,s,1)$
by weighted homogeneous coordinates $[ z : w : t ]$.
It follows that  
$f$ extends to a rational map $\tilde{f}$ on $\mathbb{P} (r,s,1)$,
\[
\tilde{f} [ z : w : t ] 
= \left[ p \left( \frac{z}{t^r} \right) t^{\lambda r} 
: q \left( \frac{z}{t^r}, \frac{w}{t^s} \right) t^{\lambda s} 
: t^{\lambda} \right].
\]
We say that $\tilde{f}$ is algebraically stable
if there is no integer $n$ and no hypersurface $V$
such that ${\tilde{f}}^n (V - I_{{\tilde{f}}^n}) \subset I_{\tilde{f}}$,
where $I_{\tilde{f}}$ denotes the indeterminacy set of $\tilde{f}$.
It is well known that if $\tilde{f}$ is algebraically stable on $\mathbb{P}^2$,
then $\deg (f^n) = (\deg f)^n$.
In the final section
we present a similar claim on $\mathbb{P} (r,s,1)$. 
We define the Fatou set $F_{\tilde{f}}$ of $\tilde{f}$ as 
the maximal open set of $\mathbb{P} (r,s,1)$
where the family of iterates $\{ \tilde{f}^n \}_{n \geq 0}$ is normal.
The Julia set $J_{\tilde{f}}$ of $\tilde{f}$ is defined as 
the complement of the Fatou set of $\tilde{f}$.

\section{Nondegenerate case}

To recall the main results in \cite{u-weight},
we assume that $f$ is nondegenerate throughout this section.
We defined the rational number $\alpha$ as 
\[
\min 
\left\{ l \in \mathbb{Q} \ \Big| 
\begin{array}{lcr}
l \lambda \geq n_j + l m_j
\text{ for any integers $n_j$ and $m_j$} \\ 
\text{s.t. } c_j z^{n_j} w^{m_j} \text{ is a term in } q 
\text{ for some } c_j \neq 0
\end{array} 
\right\}
\]
if $\deg_z q > 0$
and as $0$ if $\deg_z q = 0$.  
Since $q$ has only finitely many terms, one can take the minimum.
Indeed, $\alpha$ is equal to 
\[
\max \left\{ \frac{n_j}{\lambda - m_j} \ \Big| 
\begin{array}{lcr}
\ c_j z^{n_j} w^{m_j} \text{ is a term in } q \\
\text{ with $c_j \neq 0$ and } m_j < \lambda 
\end{array}
\right\}.
\]
Clearly, $\alpha \geq 0$, 
and $\alpha = 0$ if and only if $f$ is a polynomial product.
By definition, 
$\alpha \leq \deg_z q$ and $\alpha < \deg q$.
Moreover, 
$\lambda^n \leq \deg (f^n) \leq 
\max \{ 1, \alpha \} \lambda^n$ for any positive integer $n$.

We define the weight of a monomial $z^n w^m$ mainly as $n + \alpha m$.
The weight of $q$ is defined as the maximum of the weights of all terms in $q$.
Then the weight of $q$ is $\alpha \lambda$
and the weight of $Q_z^n(w)$ is $\alpha \lambda^n$.
Let $h$ be the weighted homogeneous part of $q$ of highest weight $\alpha \lambda$. 
In the case $\delta \leq d$, 
it follows from definition that $h$ contains $w^d$.
On the other hand,
if we replace $\alpha$ 
in the definition of the weight of a monomial
by a positive number which is larger than $\alpha$,
then $h$ coincides with $w^d$,
and if we replace $\alpha$ by a positive number which is smaller than $\alpha$,
then $h$ does not contain $w^d$.

We explain the importance of $\alpha$ in terms of 
the rational extensions of $f$,
assuming that $f$ is not a polynomial product.
We saw that $f$ extends to a rational map $\tilde{f}$
on $\mathbb{P} (r,s,1)$ for any two positive integers $r$ and $s$.
Moreover, it extends to 
a weighted homogeneous polynomial on $\mathbb{C}^{3}$ 
if and only if $s/r \geq \alpha$.
Similarly, 
$\tilde{f}$ is algebraically stable 
if and only if $s/r \geq \alpha$;
if $s/r < \alpha$
then $\tilde{f}$ contracts $L_{\infty} - I_{\tilde{f}}$ 
to the indeterminacy point $p_{\infty} = [0:1:0]$.
The map $\tilde{f}$ is holomorphic 
if and only if $\delta = d$ and $s/r \geq \alpha$.
Note that the holomorphic extensions of polynomial maps
to weighted projective spaces are also mentioned in \cite[Section 5.3]{fj}.

Let us list some more details of $\tilde{f}$ 
on $\mathbb{P} (r,s,1)$ for $s/r \geq \alpha$,
which deffer depending on the magnitude relation of $\delta$ and $d$. 
If $\delta > d$, then $I_{\tilde{f}} = \{ p_{\infty} \}$.
Moreover, 
if $s/r = \alpha$ 
then the dynamics on $L_{\infty} - \{ p_{\infty} \}$ is induced by the polynomial $h$,
and if $s/r > \alpha$ 
then $\tilde{f}$ contracts $L_{\infty} - \{ p_{\infty} \}$ 
to the attracting fixed point $[1:0:0]$.
On the other hand, 
$p_{\infty}$ becomes an attracting fixed point if $\delta \leq d$. 
If $\delta < d$,  
then $\tilde{f}$ contracts $L_{\infty} - I_{\tilde{f}}$ to $p_{\infty}$.
If $\delta = d$, 
then the dynamics on $L_{\infty}$ is determined by $h$.  
Here $h$ is the weighted homogeneous part of $q$ of highest weight $sd/r$, 
which coincides with $w^d$ if $s/r > \alpha$.

Via $\alpha$, 
we obtain the results on the Green functions of $f$.
In the case $\delta > d$,
the dynamics of $\tilde{f}$ described above implies the following upper estimate.

\begin{theorem}[\cite{u-weight}] \label{}
Let $\gamma = 0$.
If $\delta > d$, then 
$\tilde{G}_{z}^{\lambda} \leq \alpha G_p$ 
on $\mathbb{C}^{2}$, where 
$\tilde{G}_{z}^{\lambda} = \limsup_{n \to \infty}$ $\delta^{-n} \log^{+}$ $|Q_z^n|$.
In particular, if $\alpha \leq 1$ then $G_f = G_p$ on $\mathbb{C}^{2}$.
\end{theorem}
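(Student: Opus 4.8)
The plan is to establish the pointwise inequality $\limsup_{n\to\infty}\delta^{-n}\log^{+}|Q_z^n(w)|\le\alpha G_p(z)$ separately on $K_p\times\mathbb{C}$ and on $A_p\times\mathbb{C}$; since $\delta>d$ gives $\lambda=\delta$ and $\mathbb{C}=A_p\sqcup K_p$, this yields the theorem. On $K_p\times\mathbb{C}$ the right-hand side is $0$ and the bound is soft: $K_p$ is compact and $p$-invariant, so the coefficients of $q_{p^n(z)}(w)$ stay bounded along the orbit of a fixed $z\in K_p$, whence $|Q_z^{n+1}(w)|\le C(1+|Q_z^n(w)|)^d$ for a constant $C=C(z)$; a routine induction then gives $\log^{+}|Q_z^n(w)|\le c\,d^n$ with $c=c(z,w)<\infty$, and dividing by $\delta^n$ kills this because $d<\delta$. (One could instead quote \cite[Theorem 6.1]{fg}: as $\gamma=0$ forces $b\equiv1$, the fiberwise Green function $G_z$ exists and is finite on $K_p\times\mathbb{C}$, so $\delta^{-n}\log^{+}|Q_z^n|=(d/\delta)^n\,d^{-n}\log^{+}|Q_z^n|\to 0$.)

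The substance is on $A_p\times\mathbb{C}$, and here I would exploit the weight. Since $\gamma=0$ and $b$ is monic, $b\equiv1$, so the only monomial of $w$-degree $d$ in $q$ is $w^d$; write $q(z,w)=w^d+\sum_j c_j z^{n_j}w^{m_j}$ with $m_j\le d-1$, and recall that by the definition of $\alpha$ every term satisfies $n_j+\alpha m_j\le\alpha\delta$. Fix $z\in A_p$; choosing $R\ge2$ large enough that $|p(\zeta)|\ge\frac12|\zeta|^{\delta}$ for $|\zeta|\ge R$ (possible as $p$ is monic of degree $\delta$) makes $\{|\zeta|\ge R\}$ forward invariant under $p$, so there is $N$ with $|z_n|\ge R$ for all $n\ge N$, where $z_n:=p^n(z)$ and $w_n:=Q_z^n(w)$. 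The key step is an induction showing that for $n\ge N$ one has $|w_n|\le K_n|z_n|^{\alpha}$, where $K_N:=\max\{1,|w_N|\,|z_N|^{-\alpha}\}$ and $K_{n+1}:=2^{\alpha}(1+C_1)K_n^{\,d}$ with $C_1:=\sum_j|c_j|$: assuming $|w_n|\le K_n|z_n|^{\alpha}$ and using $|z_n|\ge1$, $K_n\ge1$, $m_j\le d-1\le\delta$ and $n_j+\alpha m_j\le\alpha\delta$, every monomial of $q(z_n,w_n)$ is bounded, up to its coefficient, by $K_n^{\,d}|z_n|^{\alpha\delta}$, so $|w_{n+1}|\le(1+C_1)K_n^{\,d}|z_n|^{\alpha\delta}$; since $|z_n|^{\delta}\le 2|z_{n+1}|$ this rewrites as $|w_{n+1}|\le K_{n+1}|z_{n+1}|^{\alpha}$.

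From the recursion $\log K_{n+1}=d\log K_n+\log(2^{\alpha}(1+C_1))$ one gets $\log K_n\le M\,d^n$ for $n\ge N$, with $M=M(z,w)<\infty$, hence $\log^{+}|Q_z^n(w)|\le\log K_n+\alpha\log^{+}|p^n(z)|\le M\,d^n+\alpha\log^{+}|p^n(z)|$ and
\[
\frac{1}{\delta^n}\log^{+}|Q_z^n(w)|\le M\Bigl(\frac{d}{\delta}\Bigr)^{n}+\alpha\cdot\frac{1}{\delta^n}\log^{+}|p^n(z)|.
\]
Letting $n\to\infty$ and using $d<\delta$ together with $\delta^{-n}\log^{+}|p^n|\to G_p$ gives $\tilde G_z^{\lambda}(w)\le\alpha G_p(z)$, which finishes the first assertion. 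For the ``in particular'', I would note that $\delta^{-n}\log^{+}|f^n(z,w)|=\max\{\delta^{-n}\log^{+}|p^n(z)|,\delta^{-n}\log^{+}|Q_z^n(w)|\}$, whose $\liminf$ is at least $G_p(z)$ and, when $\alpha\le1$, whose $\limsup$ is at most $\max\{G_p(z),\alpha G_p(z)\}=G_p(z)$ since $G_p\ge0$; so the limit $G_f(z,w)$ exists and equals $G_p(z)$.

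The main obstacle — and the reason one cannot simply look for a forward-invariant region $\{|w|\le K|z|^{\alpha}\}$ with a uniform $K$ — is that $q$ has $w$-degree $d\ge2$, which forces the constant $K_n$ in the induction to grow instead of stabilizing. The point that makes the argument go through is that $K_n$ grows only at rate $d^n$, which is negligible after dividing by $\delta^n$ precisely because $\delta>d$; so the real content is the bookkeeping insight that one should let the constant grow and control its exponential rate rather than its size.
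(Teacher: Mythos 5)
Your argument is correct. Both the reduction to the two cases $z\in K_p$ and $z\in A_p$, the inductive estimate $|w_n|\le K_n|z_n|^{\alpha}$ on $A_p$, the solution of the recursion for $\log K_n$, and the ``in particular'' deduction via $\limsup\max\{a_n,b_n\}=\max\{\limsup a_n,\limsup b_n\}$ all hold up under scrutiny. One point worth noting: the inequality $n_j+\alpha m_j\le\alpha\delta$ that you use is exactly the defining property of $\alpha$ (with $\lambda=\delta$ here), and the $K_n^d$ growth indeed comes from the $w^d$ term, which is precisely the term where this inequality is strict ($\alpha d<\alpha\delta$); this is the structural reason the constant can be allowed to grow at rate $d^n$ and still vanish after division by $\delta^n$.

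The paper itself does not reproduce the proof of this theorem (it is cited from the author's earlier paper), but in Section~4 it sketches, for the analogous Proposition~4.1 in the degenerate case, a dynamical proof via the rational extension $\tilde f$ on $\mathbb{P}(r,s,1)$: for any $l=s/r>\alpha$ the point $[1\!:\!0\!:\!0]$ is a strongly attracting fixed point on $L_{\infty}$, which yields $\tilde G_z^{\lambda}\le l\,G_p$ for every $l>\alpha$, and then $\le\alpha G_p$ by letting $l\downarrow\alpha$. Your approach is a direct coordinate computation that works at the critical weight $l=\alpha$ itself, compensating for the loss of strict contraction there by letting the multiplicative constant $K_n$ grow and controlling its exponential rate. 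The two routes exchange different pieces of slack: the paper's sketch uses slack in the weight $l>\alpha$ to keep a uniform constant, while yours uses slack in the base $d<\delta$ to absorb a constant that grows like $d^n$. Your version has the virtue of being self-contained and not requiring the compactification, and it also makes transparent exactly where $\delta>d$ is used; the paper's route has the virtue of clarifying the geometric picture (the basin of $[1\!:\!0\!:\!0]$) and of generalizing directly to Proposition~4.1.
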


Hence if $\delta > d$, then $G_f^{\alpha} = \alpha G_p$ on $\mathbb{C}^{2}$.
In the case $\delta \leq d$,
it follows from definition that $f \sim (z^{\delta}, w^d)$ on $W_R$
and that $f(W_R) \subset W_R$ for large $R > 0$,
where $W_R = \{ |w| > R|z|^{\alpha}, |w| > R^{\alpha + 1} \}$.
This implies that $G_z$ is defined, continuous and pluriharmonic on $W_R$.
Although we use the same notation $W_R$ in the sections below,
the definition of $W_R$ differs depending on whether $f$ is nondegenerate.
Let $A_f = \cup_{n \geq 0} f^{-n} (W_R)$.

\begin{theorem}[\cite{u-weight}]\label{} 
Let $\gamma = 0$.
If $\delta \leq d$, 
then $G_z$ is defined, continuous and pluriharmonic on $A_f$.
Moreover, 
$G_z (w)$ tends 
to $0$ if $\delta < d$ and
to $\alpha G_p (z)$ if $\delta = d$
as $(z,w)$ in $A_f$ tends to $\partial A_f$.
\end{theorem}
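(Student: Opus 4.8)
The plan is to first upgrade the three properties of $G_z$ from $W_R$ to all of $A_f$ by iterating the fiberwise functional equation, and then to identify the boundary behaviour by following each orbit until its first entry into $W_R$. Since $f(W_R)\subset W_R$, the open sets $A_f^{(n)}:=f^{-n}(W_R)$ form an increasing exhaustion of $A_f$. Taking as input that $G_z$ is defined, continuous and pluriharmonic on $W_R$, where moreover $G_{p(z)}(q_z(w))=dG_z(w)$, I would set $G_z(w):=d^{-n}G_{p^n(z)}(Q_z^n(w))$ for $(z,w)\in A_f^{(n)}$, with $f^n(z,w)=(p^n(z),Q_z^n(w))\in W_R$. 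Nestedness of the $A_f^{(n)}$ together with the functional equation on $W_R$ makes this independent of $n$; since the forward orbit escapes to infinity once inside $W_R$, one has $\log^{+}=\log$ for large $m$ and $d^{-m}\log^{+}|Q_z^m(w)|\to d^{-n}G_{p^n(z)}(Q_z^n(w))$, so the defining limit exists and equals this value. On $A_f^{(n)}$ the function $G_z(w)$ is the pullback of a continuous pluriharmonic function on $W_R$ by the polynomial map $f^n$, and as these are local properties they pass to $A_f$.

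For the boundary behaviour, fix a sequence $(z_j,w_j)\in A_f$ converging to $(z_0,w_0)\in\partial A_f$ and let $N_j$ be the least $n$ with $f^n(z_j,w_j)\in W_R$, so that $G_{z_j}(w_j)=d^{-N_j}G_{p^{N_j}(z_j)}(Q_{z_j}^{N_j}(w_j))$ with $f^{N_j}(z_j,w_j)\in W_R$. I would first show $N_j\to\infty$: a direct computation on the two boundary pieces $\{|w|=R|z|^{\alpha}\}$ and $\{|w|=R^{\alpha+1}\}$ of $\partial W_R$, using $\delta\le d$, $d\ge2$, the relation $f\sim(z^{\delta},w^d)$ near $W_R$ and $R$ large, gives $f(\overline{W_R})\subset W_R$; so if $N_j$ stayed bounded along a subsequence, then $f^{n_0}(z_0,w_0)\in\overline{W_R}$ for a fixed $n_0$, forcing $f^{n_0+1}(z_0,w_0)\in W_R$ and $(z_0,w_0)\in A_f$, a contradiction. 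I would then record three uniform estimates: on $W_R$, $\bigl|G_\zeta(\omega)-\log^{+}|\omega|\bigr|\le C$ (from $f\sim(z^{\delta},w^d)$ and telescoping $\log|Q_\zeta^n(\omega)|$); the weighted growth bound $\log^{+}|q(z,w)|\le d\log^{+}\max\{|z|^{\alpha},|w|\}+C$, which follows from the inequality $n_k+\alpha m_k\le\alpha\lambda=\alpha d$ satisfied by every term $z^{n_k}w^{m_k}$ of $q$; and the standard facts that $\delta^{-n}\log^{+}|p^n(z)|\to G_p(z)$ locally uniformly and that $G_p\ge0$ is continuous.

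To obtain an upper bound, write $Q_{z_j}^{N_j}(w_j)=q(z',w')$ with $z'=p^{N_j-1}(z_j)$; since $f^{N_j-1}(z_j,w_j)\notin W_R$ we have $|w'|\le\max\{R|z'|^{\alpha},R^{\alpha+1}\}$, so $\max\{|z'|^{\alpha},|w'|\}\le R\max\{|z'|,R\}^{\alpha}$, and combining the three estimates yields
\[
G_{z_j}(w_j)\le d^{-(N_j-1)}\bigl(\log R+\alpha\max\{\log^{+}|p^{N_j-1}(z_j)|,\log R\}\bigr)+C\,d^{-N_j}.
\]
Writing $d^{-(N_j-1)}\log^{+}|p^{N_j-1}(z_j)|=(\delta/d)^{N_j-1}\,\delta^{-(N_j-1)}\log^{+}|p^{N_j-1}(z_j)|$ and letting $j\to\infty$, the right-hand side tends to $0$ when $\delta<d$ and to $\alpha G_p(z_0)$ when $\delta=d$. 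Since $G_z\ge0$, this already proves the claim for $\delta<d$. For $\delta=d$ I would add the matching lower bound: on $W_R$ one has $G_\zeta(\omega)\ge\log|\omega|-C$ and $|Q_{z_j}^{N_j}(w_j)|>\max\{R|p^{N_j}(z_j)|^{\alpha},R\}$, whence $G_{z_j}(w_j)\ge d^{-N_j}\bigl(\log R+\alpha\log^{+}|p^{N_j}(z_j)|-C\bigr)\to\alpha G_p(z_0)$; by continuity of $G_p$ this is the asserted convergence.

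I expect the boundary analysis to be the main obstacle. Two points there need care: proving $N_j\to\infty$, which rests entirely on the case-by-case verification that $f(\overline{W_R})\subset W_R$ and is exactly where the hypothesis $\delta\le d$ enters; and extracting matching upper and lower bounds, which requires the sharp weight inequality for the terms of $q$ rather than a bound by the algebraic degree, and attention to $\log^{+}$, to whether $z_0$ lies in $A_p$ or $K_p$, and to the local uniformity of $\delta^{-n}\log^{+}|p^n|\to G_p$. The first part of the argument is routine once the statement on $W_R$ is taken as given.
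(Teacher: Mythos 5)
Your proof is correct and follows the same basic blueprint as the paper's own proofs of the analogous statements for $\gamma\neq0$ (Theorems~4.4 and~5.4): extend $G_z$ from $W_R$ to $A_f$ via the functional equation $G_{p(z)}(q_z(w))=dG_z(w)$ using the invariance $f(W_R)\subset W_R$, then estimate the boundary behaviour by pushing the estimate on $W_R$ through iterates. Where you diverge is in how the boundary limit is extracted. The paper establishes the estimate $|G_z(w)-d^{-n}\log R|\leq d^{-n}C_R$ only on the preimage sets $f^{-n}(E)$ of a fixed boundary hypersurface $E\subset\partial W_R$ and then invokes, without much justification, that ``$f^{-n}(E)$ converges to $\partial A_f$''; this does not directly control an arbitrary sequence $(z_j,w_j)\to\partial A_f$, whose orbits generically jump over $\partial W_R$ rather than hitting $E$. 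Your first-entry-time argument fills exactly this gap: by using the global weight inequality $n_k+\alpha m_k\leq\alpha d$ to bound $q$ at the last point $f^{N_j-1}(z_j,w_j)\notin W_R$, and the local estimate $|G_\zeta(\omega)-\log|\omega||\leq C$ at the first point inside $W_R$, you get a two-sided bound valid for any sequence. The auxiliary fact you isolate, $f(\overline{W_R})\subset W_R$, is exactly what is needed to get $N_j\to\infty$, and it does follow from the same strict inequalities (with $d\geq2$) used to prove $f(W_R)\subset W_R$. The one small slip is cosmetic: in the nondegenerate case $W_R=\{|w|>R|z|^\alpha,\,|w|>R^{\alpha+1}\}$, so the lower bound on $|Q_{z_j}^{N_j}(w_j)|$ should read $\max\{R|p^{N_j}(z_j)|^\alpha,R^{\alpha+1}\}$ rather than $\max\{\,\cdot\,,R\}$; this does not affect the limit. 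You also correctly flag the need to distinguish $z_0\in A_p$ (where the lower bound $d^{-N_j}\log^+|p^{N_j}(z_j)|\to G_p(z_0)>0$ is required) from $z_0\in J_p$ (where $G_z\geq0$ suffices). Overall this is the paper's strategy executed with a cleaner and more rigorous boundary argument.
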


Hence if $\delta \leq d$, 
then $G_f$ is defined, continuous and pluriharmonic on $A_f$.
Note that $A_f$ is the restriction of 
the attracting basin of $p_{\infty}$ to $\mathbb{C}^{2}$.
If $\delta < d$, 
then $G_z$ is defined, continuous and plurisubharmonic on $\mathbb{C}^{2}$
and it coincides with both $G_f$ and $G_f^{\alpha}$. 
If $\delta = d$, 
then $G_f^{\alpha}$ is defined, continuous and plurisubharmonic on $\mathbb{C}^{2}$;
roughly speaking, it is the maximum of $\alpha G_p$ and $G_z$.
Moreover,
$G_f^{\alpha}$ determines the Fatou and Julia sets of 
the holomorphic map $\tilde{f}$ on $\mathbb{P} (r,s,1)$, 
where $s/r \geq \alpha$. 

Since two theorems above hold with suitable modifications for any $l \geq \alpha$,
we arrive at the following corollary.

\begin{cor}[\cite{u-weight}]
Let $\gamma = 0$.
For any $l \geq \alpha$, the limit $G_f^{l}$ is defined,
continuous and plurisubharmonic on $\mathbb{C}^{2}$.
\end{cor}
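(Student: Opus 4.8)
The plan is to observe that the proofs of the two theorems above use $\alpha$ only through the single structural feature that, once the weight of a monomial $z^{n}w^{m}$ is declared to be $n+\alpha m$, the weighted homogeneous part $h$ of $q$ of highest weight contains $w^{d}$; equivalently, that the rational extension $\tilde{f}$ to $\mathbb{P}(r,s,1)$ with $s/r=\alpha$ is algebraically stable, with $I_{\tilde{f}}$ and the dynamics on $L_{\infty}$ as described in Section 3. For any rational $l\geq\alpha$ the same holds with $n+\alpha m$ replaced by $n+lm$: since $s/r=l\geq\alpha$, the extension $\tilde{f}$ on $\mathbb{P}(r,s,1)$ stays algebraically stable (and holomorphic when $\delta=d$), $p_{\infty}$ is still attracting when $\delta\leq d$, and $f\sim(z^{\delta},w^{d})$ on the region $W_{R}=\{\,|w|>R|z|^{l},\ |w|>R^{l+1}\,\}$ with $f(W_{R})\subset W_{R}$ for $R$ large. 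Hence every estimate in \cite{u-weight} used to prove the two theorems goes through after substituting $l$ for $\alpha$, yielding their $l$-analogues; as before set $A_{f}=\bigcup_{n\geq0}f^{-n}(W_{R})$.

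Granting this, I would dispose of the cases $\delta>d$ and $\delta<d$ first. If $\delta>d$, then $\lambda=\delta$ and $\tilde{G}_{z}^{\lambda}\leq\alpha G_{p}\leq l G_{p}$ on $\mathbb{C}^{2}$ by the first theorem above, so, using $\log^{+}(x^{l})=l\log^{+}x$ for $l\geq0$,
\[
G_{f}^{l}=\max\{\,l G_{p},\ \tilde{G}_{z}^{\lambda}\,\}=l G_{p}
\]
as a genuine limit (the $\max$ of a convergent sequence and one whose $\limsup$ it dominates), and since $l\geq0$ and $G_{p}$ is continuous and subharmonic on $\mathbb{C}$, $G_{f}^{l}$ is continuous and plurisubharmonic on $\mathbb{C}^{2}$. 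If $\delta<d$, then $\lambda=d$ and $d^{-n}l\log^{+}|p^{n}(z)|=(\delta/d)^{n}\cdot l\,\delta^{-n}\log^{+}|p^{n}(z)|\to0$ locally uniformly, so $G_{f}^{l}$ equals the fiberwise Green function $G_{z}$ wherever $G_{z}$ exists; the $l$-analogue of the second theorem above gives $G_{z}$ on $A_{f}$, pluriharmonic and with boundary value $0$, and \cite{fg} gives $G_{z}$ on $K_{p}\times\mathbb{C}$, whence $G_{z}=G_{f}^{l}$ is defined, continuous and plurisubharmonic on all of $\mathbb{C}^{2}$, independently of $l$.

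There remains the case $\delta=d$, where $\lambda=d$ and, morally, $G_{f}^{l}=\max\{\,l G_{p},\ G_{z}\,\}$. On $A_{f}$ the $l$-analogue of the second theorem above gives $G_{z}$ pluriharmonic with boundary value $l G_{p}$ along $\partial A_{f}$; on the remaining part of $A_{p}\times\mathbb{C}$ one has $G_{f}^{l}=l G_{p}$; and on $K_{p}\times\mathbb{C}$ one has $G_{p}=0$ while $G_{z}$ exists by \cite{fg}. Since the candidate pieces agree along the common boundaries (where $l G_{p}$ vanishes), they glue to a function continuous on $\mathbb{C}^{2}$, and plurisubharmonicity follows either from the decreasing-limit description $G_{f}^{l}=\lim_{n}\lambda^{-n}\log^{+}|f^{n}|_{l}$ or from Hartogs' lemma together with pluriharmonicity off $\partial A_{f}$. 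The hard part will be precisely this last step: verifying that $f\sim(z^{\delta},w^{d})$ on the modified region $W_{R}$ for $l>\alpha$ (so the sub-top-weight terms of $q$ are genuinely negligible there) and that the boundary values match well enough to upgrade upper semicontinuity to continuity across $\partial A_{f}$. Both points are handled exactly as in \cite{u-weight}, but the boundary bookkeeping is where care is needed.
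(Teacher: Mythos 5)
Your proposal is correct and matches the paper's approach, which consists of the single observation that Theorems 3.1 and 3.2 continue to hold with $\alpha$ replaced by any $l\geq\alpha$ (since the structural properties used in their proofs---$h$ contains $w^{d}$, $\tilde f$ algebraically stable on $\mathbb{P}(r,s,1)$ with $s/r=l$, $p_{\infty}$ attracting when $\delta\leq d$, $f\sim(z^{\delta},w^{d})$ on the modified $W_{R}$---persist), after which the corollary follows by the case analysis you give. Your elaboration is sound, modulo a minor slip in the $\delta=d$ discussion: the pieces $G_{z}$ and $lG_{p}$ match along $\partial A_{f}\cap(A_{p}\times\mathbb{C})$ because $G_{z}\to lG_{p}$ there (not because $lG_{p}$ vanishes; it vanishes only on $J_{p}\times\mathbb{C}$).
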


The $\alpha$ is optimal in the sense that
in this theorem $\alpha$ can not be replaced by any smaller number.
See \cite[Remark 2, Examples 5.2 and 5.3]{u-weight} for details.

\section{$\delta > d$}

We first study the case $\delta > d$ toward a generalization of 
the results for nondegenerate polynomial skew products.
Assume that $f$ satisfies the condition $\delta > d$
throughout this section.
Besides showing the same upper estimate of $G_z$ as the nondegenerate case,
we also analyze the new phenomena that does not appear in the nondegenerate case.

The definition of $\alpha$ is the same,
which is positive unless $f$ is a polynomial product.
The map $f$ extends to the rational map $\tilde{f}$ on $\mathbb{P} (r,s,1)$,
which is algebraically stable if $s/r \geq \alpha$.
The dynamics of $\tilde{f}$ implies 
the upper estimate of $G_z$ on $\mathbb{C}^2$,
which induces $G_f^{\alpha} = \alpha G_p$ on $\mathbb{C}^2$.
Moreover,
if $\alpha = \gamma / (\delta - d)$, 
then the dynamics of $f$ is much more understandable;
we show the existence of $G_z^{\alpha}$ on $A_p \times \mathbb{C}$. 
This function relates to the dynamics of $\tilde{f}$,
and induces the existence of $G_z$ and $G_f$ 
on some region in $A_p \times \mathbb{C}$. 

The organization of this section is as follows.
In Section 4.1,
we state the definition of $\alpha$,
whose importance is illustrated 
with the weighted homogeneous part of $q$ or $Q_z^n$,
and with the rational map $\tilde{f}$ on $\mathbb{P} (r,s,1)$.
Then the upper estimate of $G_z$ is described without a complete proof.
In addition,
we present two types of examples of polynomial skew products
whose dynamics is rather understandable.
The first type is monomial maps,
whose Green functions are completely described.
The second type is polynomial skew products
that are semiconjugate to polynomial products,
whose Green functions are well understood.
With the assumption $\alpha = \gamma /(\delta - d)$,
we prove the existence of $G_z^{\alpha}$ in Section 4.2
and the uniform convergence to $G_z^{\alpha}$ in Section 4.3, 
which implies the asymptotics of $G_z^{\alpha}$ near infinity.

\subsection{Weights}

The definition of $\alpha$ is the same as in the nondegenerate case:
\[
\min \left\{ l \in \mathbb{Q} \ \Big|
\begin{array}{lcr}
l \delta \geq n_j + l m_j \text{ for any integers $n_j$ and $m_j$ s.t.} \\ 
z^{n_j} w^{m_j} \text{ is a term in } q \text{ with nonzero coefficient}
\end{array} 
\right\},
\]
which is equal to
\[
\max \left\{ \dfrac{n_j}{\delta - m_j} \ \Big|
\begin{array}{lcr}
z^{n_j} w^{m_j} \text{ is a term in } q \\
\text{with nonzero coefficient}
\end{array} 
\right\}. 
\]
Clearly, $\alpha \geq 0$, 
and $\alpha = 0$ if and only if $f$ is a polynomial product.
By definition, 
$\alpha \leq \deg_z q$ and $\alpha < \deg q$.
Moreover, 
$\delta^n \leq \deg (f^n) \leq \max \{ 1, \alpha \} \delta^n$ 
for any positive integer $n$.

We define the weight of a monomial $z^n w^m$ mainly as $n + \alpha m$.
As same as the nondegenerate case,
the weight of $q$ is $\alpha \delta$
and the weight of $Q_z^n(w)$ is $\alpha \delta^n$.
Let $h$ be the weighted homogeneous part of $q$
of highest weight $\alpha \delta$,
which contains $z^{\gamma} w^d$ if $\alpha = \gamma /(\delta - d)$
and does not if $\alpha > \gamma /(\delta - d)$.
To begin with,
let us consider the case where the rational number $\alpha$ is an integer.
Put $w = cz^{\alpha}$, then 
$h(z, cz^{\alpha}) = h(1,c) z^{\alpha \delta}$.
Fix $c$ so that $q(z,cz^{\alpha})$ can be regarded as a polynomial in $z$.
By letting $h(c) = h(1,c)$, it follows from definition that 
$h(c) z^{\alpha \delta}$ is the homogeneous part of 
$q(z,cz^{\alpha})$ of degree $\alpha \delta$.
Moreover,
$h^n(c) z^{\alpha {\delta}^n}$ is the homogeneous part of 
$Q_z^n (cz^{\alpha})$ of degree $\alpha {\delta}^n$.
Therefore,
$z^{\alpha {\delta}^n} h^n(z^{- \alpha} w)$ is the weighted homogeneous part 
of $Q_z^n (w)$ of weight $\alpha \delta^n$.
If $\alpha$ is not an integer, then
$z^{\alpha}$ is not well defined 
and $c$ is not uniquely determined by $z$ and $w$.
However, in that case,
the polynomial $h$ has some symmetries 
related to the denominator of $\alpha$,
and these notations are still helpful.

The dynamics of $\tilde{f}$ on $\mathbb{P} (r,s,1)$ is also the same as the nondegenerate case.
The details are as follows.
If $s/r < \alpha$, 
then $\tilde{f}$ contracts $L_{\infty} - I_{\tilde{f}}$ 
to the indeterminacy point $p_{\infty} = [0:1:0]$;
thus it is not algebraically stable.
On the other hand,
if $s/r \geq \alpha$, then $\tilde{f}$ is algebraically stable.
If $s/r = \alpha$, 
then $\tilde{f} [z:w:t] = [z^{\delta} + tu(z,t): h(z,w) + tv(z,w,t): t^{\delta}]$,
where $u$ and $v$ are polynomials.
Since $h$ is divisible by $z$,
it follows that $I_{\tilde{f}} = \{ p_{\infty} \}$.
Since $\tilde{f} [z:w:0] = [z^{\delta}: h(z,w): 0]$,
the dynamics of $\tilde{f}$ on $L_{\infty} - \{ p_{\infty} \}$ is induced 
by the dynamics of $h(1,w)$. 
If $s/r > \alpha$, 
then $\tilde{f} [z:w:t] = [z^{\delta} + tu(z,t): tv(z,w,t): t^{\delta}]$.
Hence $I_{\tilde{f}} = \{ p_{\infty} \}$ and 
$\tilde{f}$ contracts $L_{\infty} - I_{\tilde{f}}$ to the attracting fixed point $[1:0:0]$.
Because the fixed point $[1:0:0]$ is attracting in a very strong sense as above,
we obtain the upper estimate
$\tilde{G}_z^{\lambda} \leq l G_p$ on $A_p \times \mathbb{C}$
for any $l = s/r > \alpha$, where $\tilde{G}_z^{\lambda} 
= \limsup_{n \to \infty} \delta^{-n} \log^{+} |Q_z^n|$.
Therefore,

\begin{pro} 
If $\delta > d$, then
$\tilde{G}_z^{\lambda} \leq \alpha G_p$ on $A_p \times \mathbb{C}$.
\end{pro}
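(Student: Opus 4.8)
The plan is to establish the pointwise bound $\limsup_{n\to\infty}\delta^{-n}\log^{+}|Q_z^n(w)|\le\alpha G_p(z)$ for every $z\in A_p$ and $w\in\mathbb{C}$; recall that $\lambda=\delta$ here. If $\alpha=0$ then $f$ is a polynomial product and $q$ is a polynomial in $w$ alone of degree $d<\delta$, so $\delta^{-n}\log^{+}|q^n(w)|\to 0$ and there is nothing to prove; hence assume $\alpha>0$. The only input about $q$ that I need is the sharp defining property of $\alpha$: for every monomial $z^{n_j}w^{m_j}$ of $q$ one has $0\le m_j\le d<\delta$ and $n_j\le\alpha(\delta-m_j)$, with no slack. (In particular the coefficient $b$ of $w^d$ carries admissible weight, $\gamma+\alpha d\le\alpha\delta$, i.e.\ $\alpha\ge\gamma/(\delta-d)$; in the nondegenerate case this was supplied by $b\equiv 1$, and here it is forced by the definition of $\alpha$.) Bounding each term for $|Z|\ge 1$ by $|Z|^{n_j}|W|^{m_j}=|Z|^{n_j+\alpha m_j}(|W|/|Z|^{\alpha})^{m_j}\le|Z|^{\alpha\delta}\max\{1,|W|/|Z|^{\alpha}\}^{m_j}$ and summing over the finitely many terms, I obtain with $C=\sum_j|c_j|$ the elementary estimate
\[
|q(Z,W)|\ \le\ C\,|Z|^{\alpha\delta}\,\max\!\left\{1,\ \frac{|W|}{|Z|^{\alpha}}\right\}^{d}\qquad(|Z|\ge 1).
\]

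Next I would iterate. Fix $z\in A_p$ and $w\in\mathbb{C}$ and write $Z_n=p^n(z)$, $W_n=Q_z^n(w)$, so that $W_{n+1}=q(Z_n,W_n)$. Since $p$ is monic of degree $\delta$, choose $R_0\ge 1$ and $c_1>0$ with $|p(Z)|\ge c_1|Z|^{\delta}$ for $|Z|\ge R_0$; since $z\in A_p$, choose $n_0$ with $|Z_n|\ge R_0$ for all $n\ge n_0$, and for such $n$ set $U_n=|W_n|/|Z_n|^{\alpha}$. Feeding the two displayed inequalities into $W_{n+1}=q(Z_n,W_n)$, the powers $|Z_n|^{\alpha\delta}$ cancel:
\[
U_{n+1}\ =\ \frac{|q(Z_n,W_n)|}{|p(Z_n)|^{\alpha}}\ \le\ \frac{C\,|Z_n|^{\alpha\delta}\max\{1,U_n^{d}\}}{c_1^{\alpha}\,|Z_n|^{\alpha\delta}}\ =\ C'\max\{1,U_n^{d}\},\qquad C':=C/c_1^{\alpha}.
\]
Hence $\log^{+}U_{n+1}\le\log^{+}C'+d\,\log^{+}U_n$ for $n\ge n_0$, and by induction $\log^{+}U_n\le B\,d^{n}$ for a constant $B=B(z,w)$. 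Since $\alpha\ge 0$ we then have $\log^{+}|W_n|=\log^{+}\!\bigl(U_n|Z_n|^{\alpha}\bigr)\le\log^{+}U_n+\alpha\log^{+}|Z_n|$, so
\[
\delta^{-n}\log^{+}|Q_z^n(w)|\ \le\ B\,(d/\delta)^{n}\ +\ \alpha\,\delta^{-n}\log^{+}|p^n(z)|.
\]
Letting $n\to\infty$, the first term vanishes because $d<\delta$, while $\delta^{-n}\log^{+}|p^n(z)|\to G_p(z)$; hence $\tilde G_z^{\lambda}(w)\le\alpha G_p(z)$, and since $z\in A_p$ and $w$ are arbitrary this gives $\tilde G_z^{\lambda}\le\alpha G_p$ on $A_p\times\mathbb{C}$.

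I do not expect a serious obstacle: the computation is short, and the only delicate point is arranging the recursion so that the factors $|Z_n|^{\alpha\delta}$ cancel \emph{exactly} — this is precisely where the sharpness of $\alpha$ (the inequalities $n_j\le\alpha(\delta-m_j)$, attained with equality in the worst term) is used, and it is what lets us land on $\alpha G_p$ rather than on $lG_p$ for some $l>\alpha$. The same mechanism can be phrased on $\mathbb{P}(r,s,1)$ with $s/r>\alpha$, where it expresses that any orbit whose base iterates $p^n(z)$ tend to infinity is drawn into the strongly attracting fixed point $[1:0:0]$, giving $\tilde G_z^{\lambda}\le(s/r)G_p$; letting $s/r\downarrow\alpha$ then recovers the Proposition. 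Finally, the argument genuinely uses $|p^n(z)|\to\infty$, which is why it is stated on $A_p\times\mathbb{C}$.
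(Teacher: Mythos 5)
Your argument is correct, and it is a clean, self-contained version of the direct estimate the paper outsources to the proof of Theorem~3.2 of the cited reference (the paper's own displayed sketch goes instead through the rational extension to $\mathbb{P}(r,s,1)$ with $s/r = l > \alpha$, deducing $\tilde G_z^\lambda \le l G_p$ for every such rational $l$, and then letting $l \downarrow \alpha$). One small efficiency of your route is worth noting: because the defining inequalities $n_j + \alpha m_j \le \alpha\delta$ are already non-strict, the factors $|Z_n|^{\alpha\delta}$ in the recursion for $U_n = |W_n|/|Z_n|^{\alpha}$ cancel exactly, so you land on $\alpha G_p$ in one pass rather than needing the auxiliary slack $l > \alpha$ and a limiting step. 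The two points that require (and receive) care in your writeup are the early dismissal of $\alpha = 0$, which is needed both so that $f$ is not a polynomial product and so that raising $|p(Z_n)| \ge c_1 |Z_n|^\delta$ to the power $\alpha$ preserves the inequality, and the use of $z \in A_p$ to guarantee $|Z_n| \ge R_0 \ge 1$ for all large $n$, which is where the hypothesis on $A_p \times \mathbb{C}$ enters.
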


One can also prove this proposition along the same line 
as the proof of \cite[Theorem 3.2]{u-weight}.
Hence if $\delta > d$, then
$G_p \leq \tilde{G}_f \leq \max \{ \alpha, 1 \} G_p$ 
on $A_p \times \mathbb{C}$,
where $\tilde{G}_f
= \limsup_{n \to \infty} \delta^{-n} \log^{+} |f^n|$.
In particular, if $\alpha \leq 1$, 
then $G_f = G_p$ on $\mathbb{C}^2$.
In addition,
the existence of $G_z$ on $K_p \times \mathbb{C}$ implies that 
$G_f^{\alpha} = G_f = G_z^{\lambda} = 0$ on $K_p \times \mathbb{C}$,
since $\lambda = \delta > d$.
Consequently,
the proposition above implies the following corollary.

\begin{cor}\label{delta > d; main cor}
If $\delta > d$, then
$G_f^{\alpha} = \alpha G_p$ on $\mathbb{C}^2$.
\end{cor}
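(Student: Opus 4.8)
The plan is to combine the upper estimate from Proposition~\ref{...} (i.e. $\tilde{G}_z^{\lambda} \le \alpha G_p$ on $A_p \times \mathbb{C}$) with a matching lower estimate, and to handle the base $K_p \times \mathbb{C}$ separately using the known vanishing there. Recall $G_f^{\alpha}(z,w) = \lim_n \lambda^{-n}\log^{+}|f^n(z,w)|_{\alpha}$ with $|(z,w)|_{\alpha} = \max\{|z|^{\max\{\alpha,0\}}, |w|\}$ and, in this section, $\lambda = \delta$. First I would observe that, by definition of the weighted modulus and since $\alpha \ge 0$ here, one has $\log^{+}|f^n(z,w)|_{\alpha} = \max\{\alpha\log^{+}|p^n(z)|,\ \log^{+}|Q_z^n(w)|\}$ up to an $O(1)$ additive term (from the $+1$ or $\max$ adjustments in the revised definition), so that
\[
\frac{1}{\delta^n}\log^{+}|f^n(z,w)|_{\alpha}
 = \max\Bigl\{\alpha\,\frac{1}{\delta^n}\log^{+}|p^n(z)|,\ \frac{1}{\delta^n}\log^{+}|Q_z^n(w)|\Bigr\} + o(1).
\]
The first term converges to $\alpha G_p(z)$ since $G_p$ exists on all of $\mathbb{C}$; so the existence and the value of $G_f^{\alpha}$ hinge on controlling $\delta^{-n}\log^{+}|Q_z^n(w)|$.

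Second, for the region $A_p \times \mathbb{C}$: Proposition~\ref{...} gives $\limsup_n \delta^{-n}\log^{+}|Q_z^n(w)| \le \alpha G_p(z)$. For the reverse inequality one uses the skew-product structure: since $f^n(z,w) = (p^n(z), Q_z^n(w))$, we trivially have $|f^n(z,w)|_{\alpha} \ge |p^n(z)|^{\alpha}$ (again up to the additive normalization), whence $\liminf_n \delta^{-n}\log^{+}|f^n(z,w)|_{\alpha} \ge \alpha G_p(z)$. Combining with the upper bound from the displayed max-formula (where the $Q_z^n$-term contributes at most $\alpha G_p(z)$ in the $\limsup$), the limit exists and equals $\alpha G_p(z)$ on $A_p \times \mathbb{C}$.

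Third, for $K_p \times \mathbb{C}$: here $G_p(z) = 0$, and the existence of $G_z$ on $K_p \times \mathbb{C}$ (from \cite{fg}, as noted in the text) together with $\lambda = \delta > d$ forces $G_f^{\alpha} = G_z^{\lambda} = 0 = \alpha G_p$ there — the point being that the fiberwise iterates grow at most like $d^n$, which is dominated by $\delta^n$, so $\delta^{-n}\log^{+}|Q_z^n(w)| \to 0$. Since $A_p \sqcup K_p = \mathbb{C}$, patching the two regions yields $G_f^{\alpha} = \alpha G_p$ on all of $\mathbb{C}^2$.

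The main obstacle is making the first step genuinely rigorous despite the modified definition of $|(z,w)|_{\alpha}$: one must check that replacing $|z|^{\alpha}$ by $(|z|+1)^{\alpha}$ (or the $\max\{\alpha,0\}$ truncation) changes $\log^{+}|f^n|_{\alpha}$ only by a quantity that is $o(\delta^n)$ — indeed bounded, or at worst $O(\log^{+}|p^n(z)|)$ which is $o(\delta^n)$ — so that it is invisible in the limit. Once that bookkeeping is settled, the rest is a direct consequence of Proposition~\ref{...} and the elementary lower bound coming from the first coordinate.
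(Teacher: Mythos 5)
Your argument is correct and follows essentially the same route the paper takes: combine the upper estimate $\tilde{G}_z^{\lambda} \leq \alpha G_p$ from the preceding proposition with the trivial lower bound $\delta^{-n}\log^{+}|f^n|_{\alpha} \geq \alpha\,\delta^{-n}\log^{+}|p^n| \to \alpha G_p$ on $A_p\times\mathbb{C}$, and handle $K_p\times\mathbb{C}$ via the Favre--Guedj existence of $G_z$ together with $\delta > d$, which forces $\delta^{-n}\log^{+}|Q_z^n|\to 0$ there. One small simplification worth noting: since $\delta>d$ implies $\alpha\geq 0$, the definition $|(z,w)|_{\alpha}=\max\{|z|^{\max\{\alpha,0\}},|w|\}$ gives $\log^{+}|f^n|_{\alpha}=\max\{\alpha\log^{+}|p^n|,\log^{+}|Q_z^n|\}$ exactly, with no $O(1)$ correction needed, so the bookkeeping you flag as the main obstacle is in fact vacuous here.
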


We end this subsection with two examples of polynomial skew products 
whose dynamics are well understood:
monomial maps, and skew products that are semiconjugate to polynomial products.

\begin{ex}[monomial maps]
Let $f = (z^{\delta}, z^{\gamma} w^d)$, $\delta > d$ and $\gamma \neq 0$.
Then $\alpha = \gamma /(\delta - d) > 0$ and
$f^n = (z^{\delta^n}, z^{\gamma_n} w^{d^n})$,
where
\[
\gamma_n = (\delta^{n - 1} + \delta^{n - 2} d + \cdots + d^{n - 1}) \gamma
= \alpha {\delta}^n \left\{ 1 - \left( \dfrac{d}{\delta} \right)^n \right\}.
\]
Hence $G_z$ is $\infty$ on $\{ |z| > 1, w \neq 0 \}$, 
$\log^{+} |w|$ on $\{ |z| = 1 \}$, and
$0$ on $\{ |z| < 1 \} \cup \{ w = 0 \}$.
Since $G_p = \log^{+} |z|$, 
\[
G_z^{\lambda} = 
\begin{cases}
\alpha \log^{+} |z| & \text{ on } \{ w \neq 0 \} \\
0 & \text{ on } \{ w = 0 \},
\end{cases}
\] 
which is not continuous on $\{ |z| \geq 1, w = 0 \}$, and
\[
G_f = 
\begin{cases}
\max \left\{ \alpha, 1 \right\} \log^{+} |z| & \text{ on } \{ w \neq 0 \} \\
\log^{+} |z| & \text{ on } \{ w = 0 \},
\end{cases}
\]
which is continuous on $\mathbb{C}^2$ if $\alpha \leq 1$ 
and is not on $\{ |z| \geq 1, w = 0 \}$ if $\alpha > 1$.
Therefore, $G_f^{\alpha} = \alpha \log^{+} |z|$ on $\mathbb{C}^2$,
which is continuous on $\mathbb{C}^2$.
The limits $G_z^{\lambda}$, $G_f$ and $G_f^{\alpha}$
are all plurisubharmonic on $\mathbb{C}^2$.
\end{ex}

\begin{ex}\label{delta > d; ex2} 
Let $f = (z^{\delta}, q(z,w))$ be a polynomial skew product,
where $q = z^{\gamma} w^d + O_z(w^{d-1})$, 
$\gamma \neq 0$ and $\delta > d$, 
that is semiconjugate to $f_0 = (z^{\delta}, h(w))$
by $\pi = (z^r,z^s w)$ for some positive integers $r$ and $s$;
$f \pi = \pi f_0$.
Note that $h(w) = q(1,w)$ and so the degree of $h$ is $d$.
The identity $q(z^r,z^s w) = z^{s \delta} q(1,w)$ implies
that $\alpha = s/r > 0$ and 
$q(z,w) = z^{\alpha \delta} h(w/z^{\alpha})$.
Moreover,
$f^n = (z^{\delta^n}, z^{\alpha \delta^n} h^n (w/z^{\alpha}))$.
Hence 
$w_n/z_n^{\alpha} = h^n (w/z^{\alpha})$ and so
\[
G_z^{\alpha} (w) =
\lim_{n \to \infty} \frac{1}{d^n}
\log^{+} \left| \dfrac{w_n}{z_n^{\alpha}} \right|
= G_h \left( \dfrac{w}{z^{\alpha}} \right) 
\text{ on } \mathbb{C}^{2} - \{ z = 0 \},
\]
where $(z_n,w_n) = f^n(z,w)$. 
Define 
\[
E_f = \bigcup_{|z| > 1} \{ z \} \times z^{\alpha} E_h
\text{ and } 
E_h = \bigcap_{l \geq 0} \overline{ \bigcup_{n \geq l} h^{-n} (0) }
\]
as in \cite{u-semiconj}.
Then 
$G_z^{\lambda} = \alpha \log^{+} |z|$ on $\mathbb{C}^{2} - E_f$,
which implies that 
$G_f = \max \{ \alpha, 1 \} \log^{+} |z|$ on $\mathbb{C}^{2} - E_f$
and that
$G_f^{\alpha} = \alpha \log^{+} |z|$ on $\mathbb{C}^{2}$.
If $0 \not\in E_h$ then the equalities above extend to $\mathbb{C}^{2}$,
and if $\alpha \leq 1$ then $G_f = \log^{+} |z|$ on $\mathbb{C}^{2}$.
\end{ex}

The claims in this example follow from the same line as in \cite{u-semiconj}.
These maps can be characterized by the symmetries of the Julia sets,
see \cite[Theorems 5.2 and 5.5]{u-sym2} for details.

\subsection{Existence of Green functions: $\alpha = \gamma/(\delta - d)$}

We saw that,
for the special map $f$ in Example \ref{delta > d; ex2},
the limit $G_z^{\alpha}$ is defined, 
continuous and plurisubharmonic on $\mathbb{C}^{2} - \{ z = 0 \}$. 
In this subsection,
assuming that $\alpha = \gamma/(\delta - d)$, 
we derive the existence, continuity and plurisubharmonicity 
of $G_z^{\alpha}$ on $A_p \times \mathbb{C}$.
We also assume that $f$ is not a polynomial product
for simplicity,
which implies that $\alpha > 0$ and $\gamma \neq 0$.

Let $W_R = \{ |z| > R, |w| > R|z|^{\alpha} \}$ for large $R > 0$. 
Note that the definition of $W_R$ differs from the nondegenerate case.
We often use the new variety $c = z^{- \alpha} w$.
Although $c$ depends on the choice of the branch of $z^{- \alpha}$,
$|c|$ does not and $W_R = \{ |z| > R, |c| > R \}$.
The following important lemma follows from 
the definition of $\alpha$.

\begin{lem}\label{delta > d: main lem}
If $\delta > d$ and $\alpha = \gamma/(\delta - d)$, then
\[
\left| \dfrac{q(z,w)}{p(z)^{\alpha}} \right| 
\sim \left| \dfrac{w}{z^{\alpha}} \right|^d 
\text{ on } W_R
\]
and $f$ preserves $W_R$; that is, $f(W_R) \subset W_R$.
\end{lem}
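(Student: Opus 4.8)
The plan is to reduce both assertions to a bookkeeping of weights, exactly as encoded in the definition of $\alpha$ from Section 4.1, combined with the elementary asymptotics $|p(z)| = |z|^\delta\bigl(1 + O(1/|z|)\bigr)$ near infinity, and the observation that only moduli enter, so that $|z^\alpha| = |z|^\alpha$ and $|p(z)^\alpha| = |p(z)|^\alpha$ irrespective of the chosen branch. Write $q(z,w) = \sum_j c_j z^{n_j} w^{m_j}$ with the leading monomial normalised so that $c_{j_0} = 1$, $n_{j_0} = \gamma$, $m_{j_0} = d$ (possible since $b$ is monic). Because $\deg_w q = d < \delta$ we have $m_j \le d < \delta$ for every $j$, so the defining inequality $n_j + \alpha m_j \le \alpha\delta$ holds for \emph{all} terms, and the hypothesis $\alpha = \gamma/(\delta - d)$ says precisely that the leading monomial attains equality, $\gamma + \alpha d = \alpha\delta$.

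First I would pass to the variable $c = z^{-\alpha}w$ and rewrite
\[
\frac{q(z,w)}{(w/z^{\alpha})^d} \;=\; \sum_j c_j\, z^{\,n_j + \alpha m_j}\, c^{\,m_j - d} \;=\; z^{\alpha\delta}\Bigl(1 + \sum_{j \ne j_0} c_j\, z^{\,(n_j + \alpha m_j) - \alpha\delta}\, c^{\,m_j - d}\Bigr).
\]
For each $j \ne j_0$ the exponent $(n_j + \alpha m_j) - \alpha\delta$ is $\le 0$, the exponent $m_j - d$ is $\le 0$, and they are not both $0$. Since $q$ has finitely many terms and $\alpha$ is rational, whenever $(n_j + \alpha m_j) - \alpha\delta < 0$ it is bounded above by $-\varepsilon_0$ for a fixed $\varepsilon_0 \in (0,1]$, and whenever it equals $0$ one has $m_j - d \le -1$. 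Hence on $W_R = \{|z| > R,\ |c| > R\}$ each such term is bounded by $|c_j|\,R^{-\varepsilon_0}$, so the whole sum is at most $C_f R^{-\varepsilon_0}$ with $C_f$ depending only on $f$. Dividing by $p(z)^{\alpha}$ and using $|p(z)^{\alpha}| = |z|^{\alpha\delta}\,|1 + O(1/|z|)|^{\alpha}$ then gives
\[
\left|\frac{q(z,w)/p(z)^{\alpha}}{(w/z^{\alpha})^d}\right| \;=\; \bigl|1 + O(R^{-\varepsilon_0})\bigr|\,\bigl|1 + O(R^{-1})\bigr|^{-\alpha},
\]
which tends to $1$ uniformly on $W_R$ as $R \to \infty$; this is the first assertion.

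For the invariance $f(W_R) \subset W_R$, I would fix $R$ large enough that the ratio just estimated lies in $[\tfrac12, 2]$ on $W_R$. Given $(z,w) \in W_R$, the asymptotics of $p$ give $|p(z)| \ge \tfrac12 |z|^{\delta} \ge \tfrac12 R^{\delta} > R$ since $\delta \ge 2$; and, writing $c' = q(z,w)/p(z)^{\alpha}$ for the new fibre coordinate, the bound above yields $|c'| \ge \tfrac12\, |c|^{d} \ge \tfrac12 R^{d} > R$ since $d \ge 2$. Thus $f(z,w) = (p(z), q(z,w))$ again satisfies $|z| > R$ and $|w| > R\,|z|^{\alpha}$, i.e.\ it lies in $W_R$, after possibly enlarging $R$.

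The point that needs the most care — and where I expect the real content to lie — is the separation estimate on the non-leading exponents: one must check that monomials of top weight $\alpha\delta$ but $w$-degree strictly less than $d$ (which can occur when $\alpha = n_j/(\delta - m_j)$ for several terms) still contribute a small quantity, and this is exactly where the factor $c^{\,m_j - d}$ with $|c| > R$ is used; all remaining terms are controlled by a genuine $z$-power gap $\varepsilon_0 > 0$. Everything else is the elementary behaviour of $p$ near infinity and the bounding of a finite sum.
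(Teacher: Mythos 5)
Your proof follows the same strategy as the paper's: introduce the auxiliary modulus $|c| = |z^{-\alpha}w|$, compare each monomial $z^{n_j}w^{m_j}$ of $q$ to the leading one $z^{\gamma}w^d$ via the exponent deficits in $z$ and in $c$, observe that the defining inequality for $\alpha$ together with the hypothesis $\alpha = \gamma/(\delta - d)$ makes at least one of these deficits strict, absorb the error from $p(z)^{\alpha}$ versus $z^{\alpha\delta}$, and deduce invariance from $\delta \ge 2$, $d \ge 2$. Your write-up is a bit more explicit about uniformity (the $\varepsilon_0$-gap argument), but it is not a different route.
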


\begin{proof}
We explain this claim by using the notation $|c| = |z^{- \alpha} w|$.
Let $z^{n_j} w^{m_j}$ be a term of $q$ with nonzero coefficient.
If $z^{n_j} w^{m_j} \neq z^{\gamma} w^d$, then
\[
\left| \dfrac{z^{n_j} w^{m_j}}{c^d z^{\alpha \delta}} \right| 
= \left| \dfrac{c^{m_j} z^{n_j + \alpha m_j}}{c^d z^{\alpha \delta}} \right| 
\to 0 \text{ as $z$ and $c$} \to \infty 
\] 
since at least one of the inequalities 
$d > m_j$ and $\alpha \delta > n_j + \alpha m_j$ holds.
Thus 
\[
\left| \dfrac{q(z,w)}{z^{\alpha \delta}}  \right| 
= \left| \dfrac{q(z,cz^{\alpha})}{z^{\alpha \delta}}  \right| 
= |c|^d \{ 1 + o(1) \}
= \left| \dfrac{w}{z^{\alpha}} \right|^d \{ 1 + o(1) \}.
\]
Hence there exist positive constants $r_1 < 1 < r_2$ such that
\begin{equation}\label{delta > d: eq1}
r_1 \left| \dfrac{w}{z^{\alpha}} \right|^d 
< \left| \dfrac{q(z,w)}{p(z)^{\alpha}} \right| 
< r_2 \left| \dfrac{w}{z^{\alpha}} \right|^d
\end{equation}
on $W_R$ for large $R$,
since $p(z) \sim z^{\delta}$ as $z \to \infty$.

Let us show that $f$ preserves $W_R$.
It is clear that if $R$ is large enough,
then $|p(z)| > R$
since $p(z) \sim z^{\delta}$ as $z \to \infty$.
Hence it is enough to show that 
$|q(z,w)| > R|p(z)|^{\alpha}$ on $W_R$ for large $R$,
which follows from inequality $(\ref{delta > d: eq1})$
since $d \geq 2$.
\end{proof}

\begin{rem}
We can also show the following
as Lemmas \ref{delta < d: main lem} and \ref{delta = d: main lem}
with a slight change of the proof:
if $\delta > d$ and $\alpha = \gamma/(\delta - d)$, then
$q(z,w) \sim z^{\gamma} w^d$ on $W_R$ for large $R > 0$.
\end{rem}

Let $\tilde{f}$ be the rational extension to $\mathbb{P} (r,s,1)$,
where $s/r = \alpha$. 
Recall that the dynamics of $\tilde{f}$ 
restricted to $L_{\infty} - \{ p_{\infty} \}$
is induced by the polynomial $h(1,w)$ of degree $d$.
Hence $p_{\infty}$ attracts most nearby points in $A_p \times \mathbb{C}$,
and $W_R$ is included in the attracting basin of $p_{\infty}$.
Therefore, 
the inclusion $f(W_R) \subset W_R$ is natural.
Let $A_f = \cup_{n \geq 0} f^{-n} (W_R)$,
which is the restriction of the attracting basin of $p_{\infty}$
to $A_p \times \mathbb{C}$.

\begin{theorem}\label{delta > d: main thm}
If $\delta > d$ and $\alpha = \gamma /(\delta - d)$, then
the limit $G_z^{\alpha}$ is defined, 
continuous and pluriharmonic on $A_f$.
Moreover, 
$G_z^{\alpha} = \log |z^{-\alpha} w| + o(1)$ on $W_R$, 
$G_z^{\alpha} \sim \log |w|$ as $w \to \infty$ for fixed $z$ in $A_p$, 
and $G_z^{\alpha}$ tends to $0$ as $(z,w)$ in $A_f$ 
tends to $\partial A_f - J_p \times \mathbb{C}$.  
\end{theorem}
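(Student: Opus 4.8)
The plan is to prove the assertion first on $W_R$ by a telescoping estimate, then to transport it to $A_f=\bigcup_{n\ge0}f^{-n}(W_R)$ using the functional equation $d\,G_z^\alpha(w)=G_{p(z)}^\alpha(q_z(w))$, and finally to extract the three asymptotic statements. Write $f^n(z,w)=(z_n,w_n)$ and $|c_n|=|z_n^{-\alpha}w_n|$; on $W_R$ this is well defined and $|c_n|>R>1$ for every $n$, because $f(W_R)\subset W_R$ by Lemma~\ref{delta > d: main lem}, so $d^{-n}\log^{+}|c_n|=d^{-n}\log|c_n|$ there. Refining that lemma one may write $q(z,w)/p(z)^\alpha=(w/z^\alpha)^d\bigl(1+\varepsilon(z,w)\bigr)$, where the computation proving the lemma shows that $\bigl|\log|1+\varepsilon|\bigr|$ is bounded on $W_R$ by a constant $C_0$ (this is essentially $(\ref{delta > d: eq1})$) and that $\varepsilon(z,w)\to0$ as $|z|$ and $|z^{-\alpha}w|$ tend to infinity. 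From the identity
\[
\frac{1}{d^n}\log|c_n|-\frac{1}{d^{n-1}}\log|c_{n-1}|=\frac{1}{d^n}\log\bigl|1+\varepsilon(z_{n-1},w_{n-1})\bigr|
\]
the sequence $d^{-n}\log|c_n|$ converges uniformly on $W_R$, which gives the existence of $G_z^\alpha$ there together with $G_z^\alpha(w)=\log|z^{-\alpha}w|+\sum_{n\ge1}d^{-n}\log\bigl|1+\varepsilon(z_{n-1},w_{n-1})\bigr|$, a series bounded by $C_0/(d-1)$. Each $d^{-n}\log|c_n|=d^{-n}\bigl(\log|w_n|-\alpha\log|z_n|\bigr)$ is pluriharmonic on $W_R$, since $w_n=Q_z^n(w)$ and $z_n=p^n(z)$ are holomorphic and zero-free there, so the uniform limit $G_z^\alpha$ is pluriharmonic on $W_R$; and since each $\varepsilon(z_{n-1},w_{n-1})\to0$ the series tends to $0$, which is the estimate $G_z^\alpha=\log|z^{-\alpha}w|+o(1)$ on $W_R$.

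Next I transport this to $A_f$. For $(z,w)\in A_f$ choose $N$ with $f^N(z,w)\in W_R$; for $n\ge N$ the term $d^{-n}\log^{+}|c_n|$ at $(z,w)$ equals $d^{-N}$ times the $(n-N)$-th such term at $(z_N,w_N):=f^N(z,w)$, so the limit exists at $(z,w)$ and $G_z^\alpha(w)=d^{-N}G_{z_N}^\alpha(w_N)$. Since $A_f=\bigcup_{n\ge0}f^{-n}(W_R)$ is an increasing union of open sets and on each $f^{-N}(W_R)$ the function $(z,w)\mapsto G_z^\alpha(w)$ is $d^{-N}$ times the composition of the pluriharmonic function $(\zeta,\omega)\mapsto G_\zeta^\alpha(\omega)$ on $W_R$ with the holomorphic map $f^N$, the function $G_z^\alpha$ is pluriharmonic, hence continuous, on $A_f$.

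It remains to prove the three limits. For $G_z^\alpha\sim\log|w|$: fix $z\in A_p$ and pick $N$ with $|p^N(z)|>R$; for $|w|$ large the top-$w$-degree term of $Q_z^N$ dominates, so $f^N(z,w)\in W_R$, and the bound $G_\zeta^\alpha(\omega)=\log|\zeta^{-\alpha}\omega|+O(1)$ on $W_R$ gives $G_z^\alpha(w)=d^{-N}\bigl(\log|w_N|-\alpha\log|z_N|+O(1)\bigr)=\log|w|+O(1)$. For the boundary statement we use $G_z^\alpha\ge0$ and bound it above. Let $(z^*,w^*)\in\partial A_f$ with $z^*\in A_p$ (equivalently, $(z^*,w^*)\in\partial A_f\setminus(J_p\times\mathbb{C})$); since $A_p$ is open and $p^n(z^*)\to\infty$, there are $n_0$ and a neighbourhood $U\times V$ of $(z^*,w^*)$ with $|p^n(\zeta)|>2R$ for all $\zeta\in U$ and $n\ge n_0$. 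For $(z,w)\in(U\times V)\cap A_f$ let $N=N(z,w)$ be the least $n$ with $f^n(z,w)\in W_R$; if $N>n_0$ then $f^{N-1}(z,w)\notin W_R$ while $|z_{N-1}|>2R$, which forces $|c_{N-1}|\le R$, and then the inequality $n_j+\alpha m_j\le\alpha\delta$ from the definition of $\alpha$, together with $|z_N|=|p(z_{N-1})|\ge\frac12|z_{N-1}|^\delta$, bounds $|c_N|$ by an absolute constant $C_2$; combined with $G_\zeta^\alpha(\omega)\le\log|\zeta^{-\alpha}\omega|+C_0$ on $W_R$ this yields $G_z^\alpha(w)=d^{-N}G_{z_N}^\alpha(w_N)\le C_3\,d^{-N(z,w)}$. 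Finally $N(z,w)\to\infty$ as $(z,w)\to(z^*,w^*)$: if not, along a subsequence $f^m(z^{(k)},w^{(k)})\in W_R$ for a fixed $m$, so $f^m(z^*,w^*)\in\overline{W_R}$; if this point lies in $W_R$ then $(z^*,w^*)\in f^{-m}(W_R)\subset A_f$, while if it lies on $\partial W_R$ a direct estimate with Lemma~\ref{delta > d: main lem} (still valid near $\partial W_R$ for $R$ large, since $d\ge2$) shows that $f$ maps it into $W_R$, so again $(z^*,w^*)\in A_f$; either case contradicts $(z^*,w^*)\in\partial A_f$, because $A_f$ is open. Hence $G_z^\alpha(z,w)\to0$, completing the proof.

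The substantial point is this last limit. One has to show that the first-passage time $N(z,w)$ into $W_R$ tends to infinity along every sequence approaching $\partial A_f\setminus(J_p\times\mathbb{C})$ — which reduces to verifying $\partial W_R\cap(A_p\times\mathbb{C})\subset A_f$ — and that the quantity $|z^{-\alpha}w|$ at that first passage stays bounded; the latter is exactly where the arithmetic defining $\alpha$ has to be exploited along a whole orbit segment rather than for a single iterate as in Lemma~\ref{delta > d: main lem}.
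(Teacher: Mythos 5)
Your argument is correct and follows the paper's proof quite closely in its first half: the telescoping estimate on $W_R$ (via Lemma~\ref{delta > d: main lem}, i.e.\ inequality~(\ref{delta > d: eq1})) giving uniform convergence of $d^{-n}\log|z_n^{-\alpha}w_n|$, the observation that each partial sum is pluriharmonic so the uniform limit is, and the extension to $A_f$ by the functional equation $G_z^\alpha=d^{-N}G_{p^N(z)}^\alpha\circ Q_z^N$ are all exactly the paper's steps. Where you diverge is the boundary limit. The paper introduces $E=\{|w|=R|z|^\alpha,\ |z|>R\}\subset\partial W_R$, derives the estimate~(\ref{delta > d: eq3}) on $f^{-n}(E)$, and then concludes from the single sentence ``$f^{-n}(E)$ converges to $\partial A_f - J_p\times\mathbb{C}$'' that $G_z^\alpha\to0$ there; this leaves implicit how to control $G_z^\alpha$ at points of $A_f$ near $\partial A_f$ that do not lie on any $f^{-n}(E)$. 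Your first-passage argument makes this rigorous: you show that the first passage time $N(z,w)$ into $W_R$ tends to infinity along any sequence in $A_f$ approaching $\partial A_f\setminus(J_p\times\mathbb{C})$ (using $f(\partial W_R\cap(A_p\times\mathbb{C}))\subset W_R$), and that the quantity $|z_N^{-\alpha}w_N|$ at first passage is bounded by an absolute constant (using $|c_{N-1}|\le R$, $|z_{N-1}|$ large, and the defining inequality $n_j+\alpha m_j\le\alpha\delta$), whence $G_z^\alpha(w)\le C_3\,d^{-N(z,w)}\to0$. This buys you an explicit pointwise bound where the paper argues by a somewhat informal approximation to $f^{-n}(E)$, and is a genuine strengthening of the exposition while remaining the same circle of ideas. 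Two small remarks: your final sentence claims the arithmetic of $\alpha$ must be exploited ``along a whole orbit segment,'' but in fact your own estimate (from $(z_{N-1},w_{N-1})$ to $(z_N,w_N)$) is a single-iterate bound, exactly as in Lemma~\ref{delta > d: main lem}; and both your proof and the paper's tacitly use that the leading $w$-coefficient $b(p^j(z))$ does not vanish along the orbit when deducing $G_z^\alpha\sim\log|w|$ for a fixed $z\in A_p$, which is consistent with the paper's subsequent observation that degenerate fibers avoid $A_f$.
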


\begin{proof}
First, 
we prove the uniform convergence of $G_n$ to $G_z^{\alpha}$ on $W_R$,
where $G_n(z,w) = d^{-n} \log | z_n^{- \alpha} w_n |$
and $(z_n,w_n) = f^n (z,w)$.
It follows from inequality $(\ref{delta > d: eq1})$ 
in the proof of Lemma {\rmfamily \ref{delta > d: main lem}} that,
for any $(z,w)$ in $W_R$,
\[
r_1 \left| \dfrac{w_n}{z_n^{\alpha}} \right|^d 
< \left| \dfrac{w_{n+1}}{z_{n+1}^{\alpha}} \right| 
< r_2 \left| \dfrac{w_n}{z_n^{\alpha}} \right|^d. 
\]
Hence, for any $(z,w)$ in $W_R$ and for any positive integer $n$,
\[
\left| G_{n+1} (z,w) - G_n (z,w) \right| 
= \left| \dfrac{1}{d^{n+1}} \log \left| \dfrac{w_{n+1}}{z_{n+1}^{\alpha}} \right| 
\cdot \left| \dfrac{w_{n}}{z_{n}^{\alpha}} \right|^{-d} \right| 
< \dfrac{\log r}{d^{n+1}},
\]
where $\log r = \max \{ - \log r_1, \log r_2 \}$.
Therefore, 
$G_n$ converges uniformly to $G_z^{\alpha}$ on $W_R$.
Since $G_n$ is continuous and pluriharmonic on $W_R$,
the limit $G_z^{\alpha}$ is also continuous and pluriharmonic on $W_R$.
By the inequality above, for any $(z,w)$ in $W_R$,
\begin{eqnarray}\label{delta > d: eq2}
 \begin{split}
 \left| G_z^{\alpha}(w) - \log \left| \dfrac{w}{z^{\alpha}} \right| \right| 
 \leq & \sum_{n=0}^{\infty} \left| G_{n+1} (z,w) - G_n (z,w) \right| \\ 
 < & \sum_{n=0}^{\infty} \dfrac{\log r}{d^{n+1}} = \dfrac{\log r}{d-1} =: C_R.  
 \end{split}
\end{eqnarray}
In particular, 
$G_z^{\alpha} \sim \log |w|$ as $w \to \infty$ for fixed $z$ in $W_R$.

We can extend the domain of $G_z^{\alpha}$ from $W_R$ to $A_f$.
Indeed, for any $(z,w)$ in $A_f$, 
there exists a positive integer $n$ 
such that $f^n (z,w)$ belongs to $W_R$.
Then we define $G_z^{\alpha}(w)$ as $d^{-n} G_{p^n(z)}^{\alpha}(Q_z^n(w))$.
Clearly, $G_z^{\alpha}$ is continuous and pluriharmonic on $A_f$,
and $G_z^{\alpha} \sim \log |w|$ as $w \to \infty$ for fixed $z$ in $A_p$.

Next, we use inequality $(\ref{delta > d: eq2})$ 
to calculate the asymptotic value of $G_z^{\alpha}(w)$ as $(z,w)$ in $A_f$ 
tends to $\partial A_f - J_p \times \mathbb{C}$.
Let $E = \{ |w| = R|z|^{\alpha}, |z| > R \} \subset \partial W_R$.
It then follows from inequality $(\ref{delta > d: eq2})$ that
\[
\left| \ G_{p^n(z)}^{\alpha}(Q_z^n(w)) - \log R \ \right| 
\leq C_R \ \text{ on } f^{-n} (E),
\]
since $f^n (z,w)$ belongs to $E$ 
for any $(z,w)$ in $f^{-n} (E)$.
Thus it follows from equation 
$G_{p^n(z)}^{\alpha}(Q_z^n(w)) = d^n G_z^{\alpha}(w)$
that 
\begin{equation}\label{delta > d: eq3}
\left| \ G_z^{\alpha}(w) - d^{-n} \log R \ \right| \leq d^{-n} C_R
\ \text{ on } f^{-n} (E).
\end{equation}
Since $f^{-n} (E)$ converges to 
$\partial A_f - J_p \times \mathbb{C}$ as $n$ tends to infinity, 
$G_z^{\alpha}(w)$ converges to $0$ 
as $(z,w)$ in $A_f$ tends to $\partial A_f - J_p \times \mathbb{C}$. 
\end{proof}

We remark that the set $\partial A_f - J_p \times \mathbb{C}$
in Theorem {\rmfamily \ref{delta > d: main thm}}
can be replaced by its closure.
Let $B_f = A_p \times \mathbb{C} - A_f$.
Since $G_z^{\alpha} = 0$ on $B_f$,
we get the following corollary.

\begin{cor}
If $\delta > d$ and $\alpha = \gamma /(\delta - d)$, then
the limit $G_z^{\alpha}$ is defined, 
continuous and plurisubharmonic on $A_p \times \mathbb{C}$.
Moreover,
it is pluriharmonic on $A_f$ and int$B_f$.
\end{cor}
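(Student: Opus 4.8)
The plan is to deduce the corollary from Theorem~\ref{delta > d: main thm} together with the fact that $G_z^{\alpha}$ vanishes on $B_f$. First I would verify this vanishing. If $(z,w)\in B_f$ then $z\in A_p$, so $|p^n(z)|\to\infty$ and in particular $|p^n(z)|>R$ for all large $n$; since $(z,w)\notin A_f$ we have $f^n(z,w)\notin W_R$ for every $n$, so the definition $W_R=\{|z|>R,\ |w|>R|z|^{\alpha}\}$ forces $|Q_z^n(w)|\le R|p^n(z)|^{\alpha}$ for all large $n$. Hence $0\le d^{-n}\log^{+}\bigl|Q_z^n(w)/p^n(z)^{\alpha}\bigr|\le d^{-n}\log^{+}R\to 0$, so the limit $G_z^{\alpha}$ exists on $B_f$ and equals $0$ there. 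Combined with Theorem~\ref{delta > d: main thm}, this shows $G_z^{\alpha}$ is defined on $A_p\times\mathbb{C}=A_f\sqcup B_f$; it is pluriharmonic on $A_f$ by that theorem, and pluriharmonic on $\mathrm{int}\,B_f$ because it is identically $0$ there.

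Next I would prove continuity on $A_p\times\mathbb{C}$. By Theorem~\ref{delta > d: main thm} the function is continuous on the open set $A_f$, and it is continuous (being constant) on $\mathrm{int}\,B_f$, so only the points of $\partial A_f\cap(A_p\times\mathbb{C})$ need attention. Such a point $q_0=(z_0,w_0)$ lies in $B_f$; moreover $z_0\in A_p$ and $J_p=\partial K_p\subseteq K_p$, so $z_0\notin J_p$ and hence $q_0\in\partial A_f-J_p\times\mathbb{C}$. By the last assertion of Theorem~\ref{delta > d: main thm}, $G_z^{\alpha}\to 0$ as one approaches $q_0$ through $A_f$, while $G_z^{\alpha}=0$ on all of $B_f$; therefore $G_z^{\alpha}$ is continuous at $q_0$ with value $0$, and continuity on $A_p\times\mathbb{C}$ follows.

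For plurisubharmonicity on $A_p\times\mathbb{C}$, write $u=G_z^{\alpha}$ viewed as a function on $A_p\times\mathbb{C}$. Then $u$ is nonnegative and continuous, and it is pluriharmonic, hence plurisubharmonic, on the open set $\{u>0\}$, which is contained in $A_f$ (since $u=0$ on $B_f$). I would then invoke the general fact that a nonnegative continuous function which is plurisubharmonic on the set where it is positive is plurisubharmonic everywhere: restrict $u$ to an arbitrary complex line $\ell$, take a small disc $D$ with $\overline D\subset\ell\cap(A_p\times\mathbb{C})$, and let $h$ be the harmonic function on $D$ with the nonnegative boundary values of $u|_{\ell}$ on $\partial D$, so $h\ge 0$ on $D$. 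If $u|_{\ell}-h$ had a positive interior maximum $M$, it would be attained at a point where $u>0$, and the one-variable maximum principle on the component of $\{u|_{\ell}>0\}\cap D$ containing that point would force $u|_{\ell}-h\equiv M$ there; letting the point run to the boundary of this component inside $D$, continuity gives $-h=M>0$, contradicting $h\ge 0$ (and if the component exhausts $D$, one concludes directly from plurisubharmonicity on $\{u>0\}$). Hence $u|_{\ell}\le h$ on $D$, so $u|_{\ell}$ is subharmonic, and since $\ell$ is arbitrary, $u$ is plurisubharmonic on $A_p\times\mathbb{C}$.

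The only genuine subtlety I anticipate is this last gluing step across $\partial A_f$: that set need not be pluripolar, and the natural approximants $d^{-n}\log^{+}\bigl|Q_z^n/p^n(\,\cdot\,)^{\alpha}\bigr|$ are not themselves plurisubharmonic on $A_p\times\mathbb{C}$ (the summand $-\alpha\log|p^n|$ is superharmonic), so one cannot simply pass to a limit of plurisubharmonic functions. What makes the maximum-principle comparison above succeed is precisely that $G_z^{\alpha}$ is nonnegative and vanishes on all of $B_f$.
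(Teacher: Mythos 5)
Your proof is correct and matches the paper's intended approach: the paper deduces this corollary in a one-line remark from the vanishing $G_z^{\alpha}=0$ on $B_f$ together with the boundary limit in Theorem~\ref{delta > d: main thm}, and you spell out the details that are left implicit. In particular, your maximum-principle argument for plurisubharmonicity is exactly the standard gluing lemma for psh functions, and you are right that it is needed here since the natural approximants $d^{-n}\log^{+}|Q_z^n/p^n(\cdot)^{\alpha}|$ are not plurisubharmonic (the term $-\alpha\log|p^n|$ being plurisuperharmonic), so a naive limit argument would not suffice.
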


Here are some properties of $A_f$ and $B_f$.
First, $A_f$ and $B_f$ are invariant under $f$;
that is, $f(A_f) \subset A_f = f^{-1} (A_f)$
and $f(B_f) \subset B_f = f^{-1} (B_f)$.
Theorem {\rmfamily \ref{delta > d: main thm}} 
guarantees that the set
$\{ \{ z \} \times \mathbb{C} : \deg_w q_z = 0 \}$ of degenerate fibers 
does not intersect with $A_f$.
Hence $B_f$ is not empty; more precisely,
$B_f \cap (\{ z \} \times \mathbb{C}) \neq \emptyset$
for any $z$ in $A_p$.

The existence of $G_z^{\alpha}$ implies 
the following two corollaries on the existence of other Green functions.

\begin{cor}
If $\delta > d$ and $\alpha = \gamma /(\delta - d)$, then
$G_z = \infty$ on $A_f$.
\end{cor}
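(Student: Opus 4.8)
The claim is that $G_z = \infty$ on $A_f$ when $\delta > d$ and $\alpha = \gamma/(\delta-d)$. Recall $G_z(w) = \lim_{n\to\infty} d^{-n}\log^+|Q_z^n(w)|$ and $G_z^\alpha(w) = \lim_{n\to\infty} d^{-n}\log^+|z_n^{-\alpha}w_n|$, where $(z_n,w_n) = f^n(z,w)$. The key observation is the algebraic identity $\log|w_n| = \log|z_n^{-\alpha}w_n| + \alpha\log|z_n|$, so that after dividing by $d^n$ and taking $\log^+$ we should get, at least formally,
\[
\frac{1}{d^n}\log^+|w_n| \;\approx\; \frac{1}{d^n}\log^+|z_n^{-\alpha}w_n| + \frac{\alpha}{d^n}\log^+|z_n|.
\]
The first term on the right converges to $G_z^\alpha(w)$, which is finite and (by Theorem~\ref{delta > d: main thm}) strictly positive on $A_f$ since $G_z^\alpha = \log|z^{-\alpha}w| + o(1) > 0$ on $W_R$ for large $R$ and $A_f = \cup_n f^{-n}(W_R)$. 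The second term is $\alpha d^{-n}\log^+|z_n|$; since $z_n = p^n(z)$ and $z \in A_p$, we have $\log^+|p^n(z)| \sim \delta^n G_p(z)$ with $G_p(z) > 0$, so this term behaves like $\alpha (\delta/d)^n G_p(z) \to \infty$ because $\delta > d$. Hence $d^{-n}\log^+|w_n| \to \infty$, i.e. $G_z = \infty$ on $A_f$.

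\emph{Steps in order.} First I would restrict to $W_R$: for $(z,w) \in W_R$ we have $|w_n| > R|z_n|^\alpha$ for all $n$ (by Lemma~\ref{delta > d: main lem}, $f(W_R)\subset W_R$), so $\log^+|w_n| = \log|w_n| = \log|z_n^{-\alpha}w_n| + \alpha\log|z_n|$ exactly, with both summands positive once $R>1$. Divide by $d^n$: the term $d^{-n}\log|z_n^{-\alpha}w_n| \to G_z^\alpha(w) \in (0,\infty)$ by Theorem~\ref{delta > d: main thm}, and $\alpha d^{-n}\log|p^n(z)| = \alpha (\delta/d)^n \cdot \delta^{-n}\log|p^n(z)| \to \infty$ since $\delta^{-n}\log^+|p^n(z)| \to G_p(z) > 0$ (as $z\in A_p$) and $(\delta/d)^n \to \infty$. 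Therefore $G_z(w) = +\infty$ on $W_R$. Second, I extend to all of $A_f$: given $(z,w)\in A_f$, pick $n_0$ with $f^{n_0}(z,w) = (z_{n_0},w_{n_0}) \in W_R$; then $G_z(w) = d^{-n_0} G_{z_{n_0}}(w_{n_0})$ by the functional equation $G_{p(z)}(q_z(w)) = d\,G_z(w)$ (which propagates $+\infty$), and since $z_{n_0}\in A_p$ as well, the first step gives $G_{z_{n_0}}(w_{n_0}) = \infty$, hence $G_z(w) = \infty$.

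\emph{Main obstacle.} The only delicate point is making the decomposition $\log^+|w_n| = \log|z_n^{-\alpha}w_n| + \alpha\log|z_n|$ legitimate and ensuring the right-hand terms are each eventually positive so that no cancellation of $\log^+$ truncations occurs; this is exactly why working inside $W_R$ (where $|w_n| > R|z_n|^\alpha > R > 1$ and $|z_n| > R > 1$) is essential, and it is handed to us by Lemma~\ref{delta > d: main lem}. A secondary technical point is that the convergence $\delta^{-n}\log^+|p^n(z)| \to G_p(z)$ only gives $\log^+|p^n(z)| = \delta^n(G_p(z) + o(1))$, so one should check that the $o(1)$ error, multiplied by $\alpha(\delta/d)^n$, does not spoil divergence — but since $G_p(z) > 0$ strictly on $A_p$, for $n$ large $\log^+|p^n(z)| > \tfrac{1}{2}\delta^n G_p(z)$, and $\alpha(\delta/d)^n\cdot\tfrac12 G_p(z) \to \infty$, which suffices. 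The extension from $W_R$ to $A_f$ via the functional equation is routine. I expect the whole argument to be short; the substance is entirely in combining Theorem~\ref{delta > d: main thm} with the elementary fact $\delta > d$.
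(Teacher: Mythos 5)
Your proof is correct and is essentially the argument the paper has in mind (the paper omits a proof, simply noting that the corollary "follows from the existence of $G_z^\alpha$"): on $W_R$ the identity $d^{-n}\log^+|Q_z^n(w)| = d^{-n}\log^+\bigl|p^n(z)^{-\alpha}Q_z^n(w)\bigr| + \alpha\,(\delta/d)^n\cdot\delta^{-n}\log^+|p^n(z)|$ holds exactly, the first term converges to the finite $G_z^\alpha(w)$ by Theorem~\ref{delta > d: main thm}, and the second diverges since $\alpha>0$, $G_p>0$ on $A_p$ and $\delta>d$; the extension to $A_f$ via the functional equation is routine. Your attention to the $\log^+$ truncations and to using $W_R$ to make the decomposition exact is exactly the right care to take.
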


We note that there exist polynomial skew products such that
$G_z = \infty$ on $A_p \times \mathbb{C}$. 
Indeed,
let $f = (z^{\delta}, q(z,w))$ be a polynomial skew product
that is semiconjugate to $(z^{\delta}, h(w))$
by $\pi = (z^r, z^s w)$.
If $0 \in A_h$, 
then $G_z = \infty$ on $\{ |z| > 1 \}$. 

\begin{cor}
If $\delta > d$ and $\alpha = \gamma /(\delta - d)$, then
$G_z^{\lambda} = \alpha G_p$ and
$G_f = \max \{ \alpha, 1 \} G_p$ on $\mathbb{C}^2 - B_f$.
\end{cor}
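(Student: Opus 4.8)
The plan is to reduce the statement to two regions. Since $B_f = A_p \times \mathbb{C} - A_f \subset A_p \times \mathbb{C}$, the complement decomposes as $\mathbb{C}^2 - B_f = A_f \sqcup (K_p \times \mathbb{C})$, and I will treat these pieces separately. On $K_p \times \mathbb{C}$ there is nothing to prove: as already recorded in this section, the existence of $G_z$ on $K_p \times \mathbb{C}$ (from \cite{fg}) forces $G_z^{\lambda} = G_f = 0$ there, while $G_p = 0$ on $K_p$, so both asserted identities read $0 = 0$. Hence the real content is concentrated on $A_f$, and in particular one has to show that on $A_f$ the limit defining $G_z^{\lambda}$ (not merely the $\limsup$) exists and equals $\alpha G_p$.

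On $A_f$ the key observation is that, by the definition of $A_f$ together with $f(W_R) \subset W_R$ from Lemma \ref{delta > d: main lem}, every point $(z,w) \in A_f$ has an $N$ with $f^n(z,w) \in W_R$ for all $n \geq N$; writing $(z_n,w_n) = f^n(z,w)$ this gives $|z_n| > R$ and $|w_n| > R|z_n|^{\alpha} > R$, so for $n \geq N$ the truncations $\log^{+}$ are honest logarithms. I would then use the elementary splitting
\[
\frac{1}{\delta^n} \log |w_n| = \left( \frac{d}{\delta} \right)^{n} \cdot \frac{1}{d^n} \log \left| \frac{w_n}{z_n^{\alpha}} \right| + \alpha \cdot \frac{1}{\delta^n} \log |z_n|,
\]
valid for $n \geq N$; here $|z_n^{-\alpha} w_n|$ is well defined even when $\alpha \notin \mathbb{Z}$, as explained just before Lemma \ref{delta > d: main lem}. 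By Theorem \ref{delta > d: main thm} the factor $d^{-n} \log |z_n^{-\alpha} w_n|$ converges to the finite number $G_z^{\alpha}(w)$, so the first term on the right tends to $0$ because $d/\delta < 1$; and $\delta^{-n} \log |z_n| \to G_p(z)$ by the definition of $G_p$, since $z \in A_p$. The finitely many initial terms $n < N$ are irrelevant to the limit, so $\delta^{-n} \log^{+} |Q_z^n(w)| \to \alpha G_p(z)$, i.e. $G_z^{\lambda} = \alpha G_p$ on $A_f$.

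For the second identity I would combine this with the base dynamics: for $n \geq N$,
\[
\frac{1}{\delta^n} \log^{+} |f^n(z,w)| = \max \left\{ \frac{1}{\delta^n} \log^{+} |z_n|, \ \frac{1}{\delta^n} \log^{+} |w_n| \right\},
\]
and letting $n \to \infty$ the right-hand side converges to $\max \{ G_p(z), \alpha G_p(z) \} = \max \{ 1, \alpha \} G_p(z)$; together with the trivial case on $K_p \times \mathbb{C}$ this yields $G_f = \max \{ \alpha, 1 \} G_p$ on $\mathbb{C}^2 - B_f$. I do not expect a genuine obstacle here: the only point needing care is the bookkeeping of the two exponential scales $d^n$ and $\delta^n$ and the reduction, after finitely many iterates, to the region $W_R$ where the $\log^{+}$'s may be replaced by $\log$'s. (One could instead quote the earlier upper estimate $\widetilde{G}_z^{\lambda} \leq \alpha G_p$ and prove only the reverse inequality, but the direct computation above gives both inequalities at once and simultaneously establishes that the limits exist on $\mathbb{C}^2 - B_f$.)
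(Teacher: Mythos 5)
Your proof is correct, and it supplies precisely the argument the paper leaves implicit (the corollary is stated without proof as a consequence of Theorem~\ref{delta > d: main thm} and the preceding discussion). The decomposition $\mathbb{C}^2 - B_f = A_f \sqcup (K_p \times \mathbb{C})$ is right; the $K_p \times \mathbb{C}$ case is indeed already settled by the remark recorded just before Corollary~\ref{delta > d; main cor}; and the essential content on $A_f$ is the two-scale splitting $\delta^{-n}\log|w_n| = (d/\delta)^n\, d^{-n}\log|z_n^{-\alpha}w_n| + \alpha\,\delta^{-n}\log|z_n|$, which correctly isolates a term controlled by the convergence of $d^{-n}\log|z_n^{-\alpha}w_n|$ to the finite value $G_z^{\alpha}(w)$ (killed by $(d/\delta)^n \to 0$) and a term converging to $\alpha G_p(z)$. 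The preliminary reduction to $W_R$ after finitely many iterates via $f(W_R)\subset W_R$, the discarding of the finitely many initial terms, and the observation that $|z_n^{-\alpha}w_n|$ is well defined even for non-integer $\alpha$ are all handled correctly. One small advantage of your route over simply citing the earlier upper bound $\tilde G_z^{\lambda}\le\alpha G_p$ and proving a matching lower bound is that it yields the existence of the genuine limit in $G_z^{\lambda}$ on $A_f$ in one stroke, which is part of what the corollary asserts.
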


In particular, 
if $\delta > d$ and $\alpha = \gamma /(\delta - d)$, 
then $G_z^{\lambda}$ exists on $A_f$.
We can insist on the optimality of $\alpha$ and $A_f$
as in \cite[Remark 2, Examples 5.2 and 5.3]{u-weight},
using polynomial skew products 
that are semiconjugate to polynomial products. 

We end this subsection with a description of the dynamics of 
$\tilde{f}$ on $\mathbb{P} (r,s,1)$,
where $s/r = \alpha$.  
Let $U = \text{int} \overline{A_p \times \mathbb{C}}$,
where the interior and closure are taken in $\mathbb{P} (r,s,1)$.
Then $U = (A_p \times \mathbb{C}) \cup (L_{\infty} - \{ p_{\infty} \})$.
Let $A_{\tilde{f}}$ be the restriction of 
the attracting basin of $p_{\infty}$ to $U$.
It is equal to the union of preimages
$\tilde{f}^{-n} (\text{int} \overline{W_R})$ and hence open.
Moreover,
$A_{\tilde{f}} = A_f \cup A_h$,
where $A_h$ denotes the set of points in $L_{\infty} - \{ p_{\infty} \}$
whose orbits converge to $p_{\infty}$.
Since $A_{\tilde{f}}$ is included in the attracting basin of $p_{\infty}$,
it follows that $A_{\tilde{f}} \subset F_{\tilde{f}}$.
Let $B_{\tilde{f}} = U - A_{\tilde{f}}$.
Then $B_{\tilde{f}} = B_f \cup K_h$,
where $K_h$ denotes the set of points in $L_{\infty} - \{ p_{\infty} \}$
whose orbits do not converge to $p_{\infty}$.
Since int$B_{\tilde{f}} \cap \text{int} \overline{\{ |z| > R \}}$ is Kobayashi hyperbolic
and preserved by $\tilde{f}$,
it follows int$B_{\tilde{f}} \subset F_{\tilde{f}}$
(see \cite{u-weight} for details).

\begin{pro}\label{delta > d: Fatou and Julia}
Let $\delta > d$ and $\alpha = \gamma/(\delta - d)$.
The restriction of $F_{\tilde{f}}$ to $U$ 
consists of $A_{\tilde{f}}$ and int$B_{\tilde{f}}$.
The restriction of $J_{\tilde{f}}$ to $U$ 
is equal to the restriction of $\partial A_{\tilde{f}}$ to $U$,
and to the restriction of $\partial B_{\tilde{f}}$ to $U$.
Moreover, it coincides with the restriction of the closure of
$\{ (z,w) \in A_p \times \mathbb{C} : 
G_z^{\alpha} \text{ is not pluriharmonic} \}$ to $U$.
\end{pro}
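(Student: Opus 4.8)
The plan is to prove the three assertions in the order stated; the first two are point-set topology once one knows $\partial A_{\tilde f}\cap U\subset J_{\tilde f}$, and the third needs in addition an analysis of the fiberwise dynamics near $L_\infty$. For the bookkeeping, recall that $A_{\tilde f}$ is open and $B_{\tilde f}=U-A_{\tilde f}$, so $B_{\tilde f}$ is closed in $U$ and $U=A_{\tilde f}\sqcup\operatorname{int}B_{\tilde f}\sqcup(B_{\tilde f}-\operatorname{int}B_{\tilde f})$; a point of $U$ lying in $B_{\tilde f}$ but not in $\operatorname{int}B_{\tilde f}$ has every neighbourhood meeting $A_{\tilde f}$, so $B_{\tilde f}-\operatorname{int}B_{\tilde f}=\partial A_{\tilde f}\cap U=\partial B_{\tilde f}\cap U$. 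Since $A_{\tilde f}\cup\operatorname{int}B_{\tilde f}\subset F_{\tilde f}$ has already been established, both of the first two assertions follow from the inclusion $\partial A_{\tilde f}\cap U\subset J_{\tilde f}$.

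To prove that inclusion, suppose some $x_0\in\partial A_{\tilde f}\cap U$ lies in $F_{\tilde f}$ and pick a connected open $V$ with $x_0\in V\subset U$ on which $\{\tilde f^n\}$ is normal. Because $\tilde f(U)\subset U$ and $\tilde f^{-1}(p_\infty)\cap U=\emptyset$ (the polynomial $f$ takes values in $\mathbb{C}^2$, and $\tilde f$ maps $L_\infty-\{p_\infty\}$ into itself), every $\tilde f^n$ is holomorphic on $V$. Passing to a subsequence, $\tilde f^{n_k}\to g$ locally uniformly on $V$ with $g\colon V\to\mathbb{P}(r,s,1)$ holomorphic. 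The open set $V\cap A_{\tilde f}$ is nonempty and every point of it has $\tilde f$-orbit converging to $p_\infty$, so $g\equiv p_\infty$ there; as $V$ is connected, the analytic set $g^{-1}(p_\infty)$ equals $V$, whence $\tilde f^{n_k}(x_0)\to p_\infty$. But $x_0\in\partial A_{\tilde f}\cap U\subset B_{\tilde f}$, which is forward invariant, so $\tilde f^{n_k}(x_0)\in B_{\tilde f}$ for all $k$. On the other hand a sufficiently small punctured neighbourhood of $p_\infty$ meets $U$ only inside $A_{\tilde f}$: its finite points have $|z|$ and $|z^{-\alpha}w|$ as large as we wish, hence lie in $W_R\subset A_f$, and its points on $L_\infty-\{p_\infty\}$ have $|z^{-\alpha}w|$ as large as we wish, hence lie in the basin $A_h$. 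So $\tilde f^{n_k}(x_0)\in A_{\tilde f}$ for large $k$, a contradiction. This proves $\partial A_{\tilde f}\cap U\subset J_{\tilde f}$, hence the first two assertions.

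For the third assertion, first identify the locus where $G_z^\alpha$ is not pluriharmonic. By Theorem~\ref{delta > d: main thm} and the corollary following it, $G_z^\alpha$ is pluriharmonic on $A_f$ and on $\operatorname{int}B_f$, positive on $A_f$, and zero on $B_f$. At a point $x\in\partial A_f\cap(A_p\times\mathbb{C})$ it cannot be pluriharmonic: $G_z^\alpha\ge 0$ and $G_z^\alpha(x)=0$, yet every neighbourhood of $x$ contains points of $A_f$ with $G_z^\alpha>0$, which contradicts the minimum principle for the restriction of $G_z^\alpha$ to a complex line through $x$. Hence $\{(z,w)\in A_p\times\mathbb{C}:G_z^\alpha\text{ not pluriharmonic}\}=\partial A_f\cap(A_p\times\mathbb{C})$. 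This set is contained in $\partial A_{\tilde f}\cap U=J_{\tilde f}\cap U$ and $J_{\tilde f}$ is closed, so its closure meets $U$ inside $J_{\tilde f}\cap U$; moreover on $A_p\times\mathbb{C}$ one has $\partial A_{\tilde f}\cap(A_p\times\mathbb{C})=\partial A_f\cap(A_p\times\mathbb{C})$, because a small neighbourhood of such a point is disjoint from $L_\infty\supset A_h$ and so meets $A_{\tilde f}$ only in $A_f$. Thus everything reduces to showing that each point of $\partial A_{\tilde f}\cap(L_\infty-\{p_\infty\})$ is a limit of points of $\partial A_f\cap(A_p\times\mathbb{C})$.

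This last point is the main obstacle, and it is handled by a uniform (as $z\to\infty$) analysis of the fiber maps near $L_\infty$, in the spirit of \cite{u-weight}. One part is easy: $\overline{A_h}\subset\overline{A_f}$, since the $\tilde f$-orbit of a point of $A_h$ eventually enters the open forward-invariant set $W_R\subset A_f$, so a neighbourhood of that point is carried into $W_R$ and its finite part lies in $A_f$; hence every point of $\partial A_{\tilde f}\cap(L_\infty-\{p_\infty\})$, being in $\overline{A_{\tilde f}}=\overline{A_f}\cup\overline{A_h}\subset\overline{A_f}$ but not in $A_{\tilde f}$, is already a limit of finite points of $A_f$. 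It then suffices to produce finite points of $B_f$ near each such point. Here one uses that, after the substitution $c=z^{-\alpha}w$, the fiber map $w\mapsto q(z,w)$ is a perturbation of the degree-$d$ polynomial $h(1,c)$ that is uniform as $z\to\infty$ (Lemma~\ref{delta > d: main lem} and the convergence estimates in the proof of Theorem~\ref{delta > d: main thm}), that $\partial A_{\tilde f}\cap(L_\infty-\{p_\infty\})=\partial A_h$, the Julia set of the induced degree-$d$ dynamics, and that the repelling periodic points of $h(1,w)$, dense in $\partial A_h$, persist by the implicit function theorem as repelling periodic points of the perturbed fiber maps for $|z|$ large; each such point has bounded $f$-orbit, hence for $R$ large never enters $W_R$, so it gives a finite point of $B_f$, and letting $|z|\to\infty$ these accumulate at every point of $\partial A_h$. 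Passing to small connected finite neighbourhoods then yields points of $\partial A_f$ accumulating at each point of $\partial A_{\tilde f}\cap(L_\infty-\{p_\infty\})$, which completes the proof. Making these uniform estimates precise — in particular near the possibly singular point $[1:0:0]$ of $\mathbb{P}(r,s,1)$ — is the delicate part; the rest is the bookkeeping above.
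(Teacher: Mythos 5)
The paper states this proposition without a proof block: the preceding text establishes only the inclusion $A_{\tilde{f}}\cup\operatorname{int}B_{\tilde{f}}\subset F_{\tilde{f}}$ (and defers the Kobayashi-hyperbolicity step to \cite{u-weight}), so there is no in-paper proof to compare against. Evaluating your argument on its own merits: the treatment of the first two assertions is sound. The decomposition $U=A_{\tilde f}\sqcup\operatorname{int}B_{\tilde f}\sqcup(\partial A_{\tilde f}\cap U)$, the observation that all $\tilde f^n$ are holomorphic on $U$ (since $\tilde f^{-1}(p_\infty)\cap U=\emptyset$), and the normal-families contradiction showing $\partial A_{\tilde f}\cap U\subset J_{\tilde f}$ all work; the identity principle for the limit map $g$ into $\mathbb{P}(r,s,1)$ is legitimate (embed the weighted projective space in some $\mathbb{P}^N$ if you want to avoid worrying about the quotient singularity at $p_\infty$). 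The identification of the non-pluriharmonic locus with $\partial A_f\cap(A_p\times\mathbb{C})$, using that $G_z^\alpha\geq 0$ vanishes on $B_f$ and the minimum principle, is also correct, as is the reduction to showing that every point of $\partial A_{\tilde f}\cap(L_\infty-\{p_\infty\})$ lies in $\overline{\partial A_f\cap(A_p\times\mathbb{C})}$, and the observation $\overline{A_h}\subset\overline{A_f}$.

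The gap is in the final step, the production of finite points of $B_f$ near a boundary point $x$ on $L_\infty$. You propose to persist repelling periodic points of $h$ by the implicit function theorem to "repelling periodic points of the perturbed fiber maps" with bounded $f$-orbit. But the fiber dynamics over a base point $z\in A_p$ is the composition sequence $q_{p^{n-1}(z)}\circ\cdots\circ q_z$, not the iteration of a single polynomial, and the effective one-variable maps in the $c=z^{-\alpha}w$ coordinate change at every step as $z\mapsto p(z)$. There is no single map being perturbed, so the implicit function theorem does not yield a genuine bounded orbit of the composition from a repelling cycle of $h$; likewise, genuine periodic points of the two-dimensional map $\tilde f$ near $L_\infty$ lie on $L_\infty$ itself (the transverse direction is superattracting, since the normal coordinate transforms roughly as $1/z\mapsto 1/z^\delta$), so they are not finite points. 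Also, "bounded $f$-orbit" should read "bounded orbit in the $c$-coordinate", since $p^n(z)\to\infty$ for $z\in A_p$. A more robust route is to use Proposition~\ref{delta > d: asy pro} directly: if a neighbourhood of $x$ in $U$ had finite part entirely in $A_f$, then $c\mapsto G_z^\alpha(z^\alpha c)$ would be harmonic and positive on a fixed disc around $c_0$ for all large $|z|$, and the uniform convergence $G_z^\alpha(z^\alpha c)\to G_h(c)$ would force $G_h$ to be harmonic there, contradicting $c_0\in J_h$ (where the Laplacian of $G_h$ carries the equilibrium measure). One must also deal with the possibility $c_0\in\operatorname{int}K_h$ — you assert $\partial A_{\tilde f}\cap(L_\infty-\{p_\infty\})=\partial A_h$ without justification, and it is not immediate from $x\in\overline{A_f}$ alone that $c_0\notin\operatorname{int}K_h$. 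These points need to be filled in before the third assertion is proved.
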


The dynamics of $\tilde{f}$ on $B_{\tilde{f}}$ is as follows.
Any point in $B_{\tilde{f}}$ is attracted to $L_{\infty}$ under iteration.
Eventually, the dynamics on $L_{\infty}$,
which is induced by $h$,
should determine the dynamics of $\tilde{f}$ on $B_{\tilde{f}}$.
It follows that $h(c) := h(1,c)$ can be written as $c^{l} H(c^r)$
for some integer $l \geq 0$ and some polynomial $H$,
which is semiconjugate to $c^{l} H(c)^r$ by $c^r$.
The restriction of $\tilde{f}$ to $L_{\infty}$ is conjugate to $c^{l} H(c)^r$.

\subsection{Uniformly convergence and Asymptotics: $\alpha = \gamma/(\delta - d)$}
Continuing with the assumption $\alpha = \gamma/(\delta - d)$,
we show that the convergence to $G_z^{\alpha}$ is uniform 
on some region in $A_p \times \mathbb{C}$,
which induces the asymptotic of $G_z^{\alpha}$ near infinity. 

\begin{pro}\label{delta > d: unif}
If $\delta > d$ and $\alpha = \gamma /(\delta - d)$, then
the convergence to $G_z^{\alpha}$ is uniform on $V \times \mathbb{C}$,
where $\overline{V} \subset A_p$.
\end{pro}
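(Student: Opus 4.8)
The plan is to show that the sequence $G_n(z,w) := d^{-n}\log^{+}|Q_z^n(w)/p^n(z)^{\alpha}|$, whose pointwise limit on $A_p\times\mathbb{C}$ is $G_z^{\alpha}$ by Theorem~\ref{delta > d: main thm} and its corollaries, is uniformly Cauchy on $V\times\mathbb{C}$. The engine is a telescoping identity: writing $(z_n,w_n)=f^n(z,w)$ and using $Q_z^{n+1}(w)=q(z_n,w_n)$ and $p^{n+1}(z)=p(z_n)$, one gets
\[
G_{n+1}(z,w)-G_n(z,w)=d^{-n}\,\phi(z_n,w_n),\qquad
\phi(z,w):=\frac{1}{d}\log^{+}\Bigl|\frac{q(z,w)}{p(z)^{\alpha}}\Bigr|-\log^{+}\Bigl|\frac{w}{z^{\alpha}}\Bigr|,
\]
valid whenever $p(z_n)\neq 0$; since $\phi$ depends only on $|q(z,w)|$, $|p(z)|$, $|w|$, $|z|$, it is a well-defined function on $\{\,|z|>R_0\,\}\times\mathbb{C}$ once $R_0$ is large enough that $p$ has no zero there. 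So everything reduces to two facts: $\phi$ is bounded on $\{\,|z|>R_0\,\}\times\mathbb{C}$, and $|p^n(z)|>R_0$ for all $z\in\overline{V}$ once $n$ is large.

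For the boundedness of $\phi$ I would fix $R_0$ so large that $|p(z)^{\alpha}|/|z|^{\alpha\delta}\in[\tfrac12,2]$ on $\{\,|z|>R_0\,\}$, which is possible since $p(z)\sim z^{\delta}$. Writing $q(z,w)=\sum_j c_j z^{n_j}w^{m_j}$ and $c=z^{-\alpha}w$, the identity $q(z,w)/z^{\alpha\delta}=\sum_j c_j z^{\,n_j+\alpha m_j-\alpha\delta}c^{m_j}$ together with the defining inequalities $n_j+\alpha m_j\le\alpha\delta$ and $m_j\le d$ gives $|q(z,w)/z^{\alpha\delta}|\le C\max\{1,|c|\}^{d}$ on $\{\,|z|>R_0\ge 1\,\}$, hence the upper bound $\phi\le d^{-1}\log(2C)$. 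For the lower bound I would split on $|c|$: when $|c|\le R$ (the constant of Lemma~\ref{delta > d: main lem}) one has $\phi\ge 0-\log^{+}R$; when $|c|>R$ the point $(z,w)$ lies in $W_R$, so inequality $(\ref{delta > d: eq1})$ gives $|q(z,w)/p(z)^{\alpha}|>r_1|c|^{d}$ and hence $\phi\ge d^{-1}\log r_1$. Thus $M:=\sup_{\{|z|>R_0\}\times\mathbb{C}}|\phi|<\infty$.

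For the uniform escape, note that $\overline{V}$ is a closed subset of the open set $A_p$, on which $G_p$ is positive and continuous, and $G_p(z)\to\infty$ as $z\to\infty$; a short argument splitting a minimizing sequence in $\overline{V}$ into its bounded and unbounded parts then shows $c_0:=\inf_{\overline{V}}G_p>0$. Combining $G_p(p^n(z))=\delta^n G_p(z)\ge\delta^n c_0$ with $G_p(z)=\log^{+}|z|+O(1)$ yields $\log^{+}|p^n(z)|\ge\delta^n c_0-O(1)$, so there is $N=N(V)$ with $|p^n(z)|>R_0$ for all $n\ge N$ and all $z\in\overline{V}$. Then for $n\ge N$ and $(z,w)\in V\times\mathbb{C}$ we get $|G_{n+1}(z,w)-G_n(z,w)|=d^{-n}|\phi(z_n,w_n)|\le M d^{-n}$, whence $|G_n-G_z^{\alpha}|\le M d^{-n}\cdot d/(d-1)$ on $V\times\mathbb{C}$ for $n\ge N$; this is exactly the asserted uniform convergence.

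The step I expect to be the real work is the boundedness of $\phi$. Its upper bound is a transparent restatement of the optimality of the weight $\alpha$, but the lower bound genuinely forces the case split above: outside $W_R$ the quantity $|q(z,w)/p(z)^{\alpha}|$ can vanish, so there is no clean two-sided comparison with $|w/z^{\alpha}|^{d}$ valid for all $w$; one must instead observe that in that regime $\log^{+}|w/z^{\alpha}|$ is itself bounded, and only in the regime $W_R$, where Lemma~\ref{delta > d: main lem} applies, does the comparison carry content.
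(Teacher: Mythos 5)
Your proof is correct, and it takes a genuinely different route from the paper's. The paper partitions $V\times\mathbb{C}$ by the level set $\{G_z^{\alpha}\geq\epsilon/3\}$: on the superlevel set it shows orbits enter $W_R$ uniformly (via the asymptotic estimate for $G_z^{\alpha}$ on $f^{-n}(E)$) and transports the $W_R$-estimate forward using $G_n\circ f=dG_{n+1}$; on the sublevel set it appeals to the maximum principle for the subharmonic slices $w\mapsto G_n(z,w)$. You instead bound the one-step increment $\phi$ on all of $\{\,|z|>R_0\,\}\times\mathbb{C}$, supplementing the two-sided comparison of Lemma~\ref{delta > d: main lem} (which only lives on $W_R$) with the elementary observation that both $\log^{+}|w/z^{\alpha}|$ and $\log^{+}|q(z,w)/p(z)^{\alpha}|$ are bounded when $|c|\le R$; combined with the uniform escape of the $z$-orbit past $R_0$ on $\overline{V}$, this yields an explicit geometric tail $|G_n-G_z^{\alpha}|\le Md^{-n}d/(d-1)$ for $n\ge N$ and avoids the maximum principle entirely. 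Your case split on $|c|$ is morally the same dichotomy the paper runs on $G_z^{\alpha}$, but carried out once at the level of the increment $\phi$ rather than iterate by iterate, which keeps the estimate local, quantitative, and independent of any a priori control on $G_z^{\alpha}$ over $B_f$. Do note explicitly that the pointwise limit on $B_f$ is $0$ (for $(z,w)\in B_f$ with $z\in A_p$ one eventually has $|z_n|>R$ hence $|w_n/z_n^{\alpha}|\le R$, so $G_n\le d^{-n}\log R\to 0$), since your uniform Cauchy argument only identifies the limit once pointwise convergence to $G_z^{\alpha}$ is known everywhere on $V\times\mathbb{C}$, not just on $A_f$.
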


\begin{proof}
We show that, for any $\epsilon > 0$ and 
for any subset $V$ such that $\overline{V} \subset A_p$,
there exists $N$ such that 
$|G_n - G_z^{\alpha}| < \epsilon$ on $V \times \mathbb{C}$ for any $n \geq N$,
where $G_n(z,w) = d^{-n} \log^{+} | z_n^{- \alpha} w_n |$
and $(z_n,w_n) = f^n(z,w)$.
Let $U = \{ z \in V, G_z^{\alpha} \geq \epsilon /3 \}$.
It is enough to show the uniform convergence of $G_n$ to $G_z^{\alpha}$ on $U$.
Indeed, if this holds,
then there exists $N$ such that 
$|G_n - G_z^{\alpha}| < \epsilon /3$ on $U$ for any $n \geq N$.
Hence $G_n < 2 \epsilon /3$ on $\partial U$ for any $n \geq N$.
By Maximum Principle for subharmonic functions on vertical lines,  
$G_n < 2 \epsilon /3$ 
and so $| G_n - G_z^{\alpha} | < \epsilon$ on $U^c$ for any $n \geq N$, 
which completes the proof.

Now we show the uniform convergence on $U$.  
The equation $G_n \circ f = d G_{n+1}$ extends 
the uniform convergent region from $W_R$ to $f^{-n} (W_R)$.
Hence it is enough to show that $U \subset f^{-n} (W_R)$ for large $n$.
From equation $(\ref{delta > d: eq3})$ in the proof of 
Theorem {\rmfamily \ref{delta > d: main thm}}, 
it follows that $G_z^{\alpha} < \epsilon /3$ on $f^{-n} (E)$ for large $n$,
which implies that $U \subset f^{-n} (W_R)$.
\end{proof}

We saw that, for fixed $c = z^{- \alpha} w$,  
the polynomial 
$h^n(c) z^{\alpha d^n}$ is the homogeneous part of 
$Q_z^n (cz^{\alpha})$ of degree $\alpha \delta^n$.
Although $c$ is not well defined if $\alpha$ is not an integer, 
the polynomial $h$ and the Green function $G_h$ 
have some symmetries related to the denominator $r$ of $\alpha$
in that case:
$h(c)$ can be written as $c^{l} H(c^r)$,
the Julia set $J_h$ is preserved by the rotation $\rho c$,
where $\rho$ is a $r$-th root of $1$,
and $G_h (c) = G_h (z^{- \alpha} w)$ is a well defined function in $z$ and $w$.
Hence we get the following asymptotics of $G_z^{\alpha}$ near infinity.

\begin{lem}\label{delta > d: asy lem} 
If $\delta > d$ and $\alpha = \gamma /(\delta - d)$, then
$G_z^{\alpha}(cz^{\alpha}) = G_h(c) + o(1)$ as $z \to \infty$
for fixed $c$. 
\end{lem}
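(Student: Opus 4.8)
The plan is to compare the iterates $Q_z^n$ with the iterates of the one‑variable polynomial $h(c) = h(1,c)$, exploiting the fact that near infinity in the $z$–direction the top weighted‑homogeneous part of $q$ dominates. Recall from the discussion preceding Lemma \ref{delta > d: main lem} that, writing $w = cz^\alpha$ (or working throughout with $|c| = |z^{-\alpha}w|$ when $\alpha\notin\mathbb Z$), the weighted homogeneous part of $Q_z^n(w)$ of weight $\alpha\delta^n$ is $z^{\alpha\delta^n}h^n(z^{-\alpha}w)$, so heuristically $Q_z^n(cz^\alpha)\approx z_n^\alpha\, h^n(c)$ once the lower‑weight terms are negligible. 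Making this precise is exactly the content of the asymptotics claimed, and the natural way to do it is to run the same contraction estimate that proved Theorem \ref{delta > d: main thm}, but now keeping track of the dependence on the base point as $z\to\infty$.

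First I would fix $c$ and note that, since $f$ preserves $W_R$ and $\tilde f$ on $L_\infty-\{p_\infty\}$ is induced by $h$, for $z$ large enough the point $(z,cz^\alpha)$ lies in $W_R$ and its forward orbit stays there; moreover the correspondence $w_n/z_n^\alpha$ versus $h^n(c)$ is governed fiberwise by the one‑step comparison $|q(z,w)/p(z)^\alpha|\sim |w/z^\alpha|^d$ of Lemma \ref{delta > d: main lem}. The key refinement is that the error in $|q(z,cz^\alpha)/z^{\alpha\delta}| = |c|^d\{1+o(1)\}$ — and likewise $|p(z)|^\alpha = |z|^{\alpha\delta}\{1+o(1)\}$ — is $o(1)$ as $z\to\infty$ \emph{uniformly} for $c$ in a fixed bounded set avoiding a neighborhood of the (finitely many) values that make $\deg_z q_{(\cdot)}$ drop; so the ratios $r_1,r_2$ in inequality (\ref{delta > d: eq1}) can be taken to be $1+o(1)$ as $z\to\infty$. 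Feeding this into the telescoping bound (\ref{delta > d: eq2}) from the proof of Theorem \ref{delta > d: main thm}, with $\log r = o(1)$ as $z\to\infty$, gives
\[
\Bigl|\,G_z^\alpha(cz^\alpha) - \log|h^n(c)\,/\,\text{(one-variable iterate correction)}|\,\Bigr| \longrightarrow 0,
\]
and comparing with the defining limit $G_h(c) = \lim_n d^{-n}\log^+|h^n(c)|$ yields $G_z^\alpha(cz^\alpha) = G_h(c) + o(1)$.

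Concretely I would argue in two ranges. If $G_h(c) > 0$, then $h^n(c)\to\infty$ and a bounded number $n_0$ of steps (depending only on $c$ through $G_h(c)$) brings $h^{n_0}(c)$ into the escaping region $|c|>R$; by the uniform $o(1)$ control above, $(z,cz^\alpha)$ then also reaches $W_R$ after $n_0$ steps with $w_{n_0}/z_{n_0}^\alpha = h^{n_0}(c)\{1+o(1)\}$, and applying estimate (\ref{delta > d: eq2}) from step $n_0$ onward together with the functional equation $G_{p^{n_0}(z)}^\alpha(Q_z^{n_0}(w)) = d^{n_0}G_z^\alpha(w)$ gives the claim. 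If $G_h(c)=0$, i.e. $c\in K_h$, then $\{h^n(c)\}$ stays bounded, the corresponding orbit $w_n/z_n^\alpha$ stays in a fixed bounded set for all $n$ (using the one‑step comparison to propagate boundedness, valid uniformly for large $z$), hence $G_n(z,cz^\alpha) = d^{-n}\log^+|w_n/z_n^\alpha| \to 0$, and together with Proposition \ref{delta > d: unif} (uniform convergence on $V\times\mathbb C$ with $\overline V\subset A_p$) we get $G_z^\alpha(cz^\alpha)\to 0 = G_h(c)$.

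The main obstacle is the non‑integrality of $\alpha$: then $z^{-\alpha}$ is multivalued, $c$ is not a well‑defined function of $(z,w)$, and one must phrase everything in terms of $|c|=|z^{-\alpha}w|$ and invoke the $r$‑fold symmetry of $h$ and of $G_h$ (namely $h(c)=c^\ell H(c^r)$ and $\rho$‑invariance of $J_h$ for $r$‑th roots of unity $\rho$) to see that $G_h(z^{-\alpha}w)$ is nonetheless a well‑defined function of $z,w$ and that the comparison is branch‑independent. The second, more technical, point is making the $o(1)$ in the one‑step estimate genuinely uniform in $c$ over the relevant set and controlling it over the bounded number of initial iterates $n_0$; this is routine given that $q$ has finitely many terms and $p(z)\sim z^\delta$, but it must be stated carefully so that the accumulated error over $n_0$ steps is still $o(1)$ as $z\to\infty$ for each fixed $c$.
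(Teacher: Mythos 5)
Your proposal shares the essential ingredients with the paper's argument: the weighted homogeneous decomposition (so $Q_z^n(cz^{\alpha}) = h^n(c)z^{\alpha\delta^n} + o(z^{\alpha\delta^n})$ for each fixed $n$), the uniform convergence from Proposition~\ref{delta > d: unif}, the locally uniform convergence of $d^{-n}\log^+|h^n|$ to $G_h$, and the need to handle non-integer $\alpha$ via the symmetries of $h$. But the execution diverges from the paper and has two genuine problems.

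First, your claim that the constants $r_1, r_2$ in inequality (\ref{delta > d: eq1}) ``can be taken to be $1 + o(1)$ as $z\to\infty$, with $\log r = o(1)$,'' so that (\ref{delta > d: eq2}) itself improves, is not how those estimates are set up: $r_1, r_2$ in the paper are \emph{fixed} constants depending only on $R$, uniformly valid over all of $W_R$, and (\ref{delta > d: eq2}) is a telescoping sum over the \emph{entire} forward orbit (which wanders back towards the boundary of $W_R$ in the $c$-variable as $z_n$ grows). They do not shrink pointwise as $z\to\infty$; making them $z$-dependent would require redoing the telescoping argument from scratch, which you do not do. The paper sidesteps this entirely: it never tries to refine (\ref{delta > d: eq2}); instead it takes a single large $N$ and compares $G_z^{\alpha}$ with $G_N = d^{-N}\log^+|z_N^{-\alpha}w_N|$ (small by Proposition~\ref{delta > d: unif}), compares $G_N(z,cz^{\alpha})$ with $d^{-N}\log^+|h^N(c)|$ (which is $o(1)$ as $z\to\infty$ for this \emph{fixed} $N$ by the homogeneous expansion and $p(z)\sim z^{\delta}$), and compares $d^{-N}\log^+|h^N(c)|$ with $G_h(c)$ (small by one-variable theory). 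This three-term triangle inequality is the whole proof.

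Second, your split into the ranges $G_h(c)>0$ and $G_h(c)=0$ is unnecessary and introduces a soft spot: in the $G_h(c)=0$ range, the assertion that ``the corresponding orbit $w_n/z_n^{\alpha}$ stays in a fixed bounded set for all $n$'' by propagating the one-step comparison is not justified by Lemma~\ref{delta > d: main lem}, which is only an estimate on $W_R$ (where $|w/z^{\alpha}| > R$), not on a neighborhood of $K_h$. What the argument actually needs is only the much weaker fact that, for each \emph{fixed} $N$, $w_N/z_N^{\alpha} \to h^N(c)$ as $z \to \infty$ — exactly the content of equation (\ref{delta > d: eq5}) — and then Proposition~\ref{delta > d: unif} absorbs the tail. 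Once you phrase it that way, both ranges are handled by the same three-term estimate and the case distinction evaporates.
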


\begin{proof}
We prove that for any $\epsilon > 0$ there exists $R > 0$ 
such that 
\begin{eqnarray}\label{delta > d: eq4}
|G_z^{\alpha}(cz^{\alpha}) - G_h(c)| < \epsilon 
\end{eqnarray}
on $\{ |z| > R \}$.
Assume the rational number $\alpha$ is an integer. 
Since $Q_z^n(cz^{\alpha}) = h^n(c)z^{\alpha {\delta}^n} + o(z^{\alpha {\delta}^n})$,
it follows that
\[
\frac{1}{d^n} \log^{+} \left| \dfrac{Q_z^n(cz^{\alpha})}{z^{\alpha {\delta}^n}} \right| 
= \frac{1}{d^n} \log^{+} |h^n(c)| + o(1).
\] 
Since $p(z) \sim z^{\delta}$ as $z \to \infty$,
\begin{eqnarray}\label{delta > d: eq5}
\frac{1}{d^n} \log^{+} 
\left| \dfrac{Q_z^n(cz^{\alpha})}{p^n(z)^{\alpha}} \right| 
= \frac{1}{d^n} \log^{+} |h^n(c)| + o(1). 
\end{eqnarray}
By Proposition {\rmfamily \ref{delta > d: unif}},
there exist an integer $N_1$ and a number $R > 0$ such that
$|G_z^{\alpha}(w) - G_n(z,w)| < \epsilon /3$ on $\{ |z| > R \}$
for any $n \geq N_1$.
Since the convergence of $G_h^n = d^{-n} \log^{+} |h^n|$ 
to $G_h$ is uniform on $\mathbb{C}$,
there exists $N_2$ such that
$|G_h(c) - G_h^n(c)| < \epsilon /3$ for any $n \geq N_2$.
From equation $(\ref{delta > d: eq5})$
it follows that if $R$ is large enough, then
$|G_N(z,w) - G_h^N(c)| < \epsilon /3$ on $\{ |z| > R \}$,
where $N = \max \{ N_1, N_2 \}$.
Consequently, inequality $(\ref{delta > d: eq4})$ holds on $\{ |z| > R \}$.

Even if $\alpha$ is not an integer, it follows that 
$|Q_z^n(cz^{\alpha})| = 
|h^n(c)||z|^{\alpha {\delta}^n} + o(|z|^{\alpha {\delta}^n})$.
A similar argument as above implies inequality $(\ref{delta > d: eq4})$.
\end{proof}

\begin{pro}\label{delta > d: asy pro}
If $\delta > d$ and $\alpha = \gamma /(\delta - d)$, then
$G_z^{\alpha}(w) = G_h \left( z^{- \alpha} w \right) + o(1)$
as $z \to \infty$. 
\end{pro}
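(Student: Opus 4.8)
The plan is to upgrade the fixed-$c$ asymptotics of Lemma~\ref{delta > d: asy lem} to a statement that is uniform in $c$ over all of $\mathbb{C}$, and then rewrite it in terms of the variable $w$. The desired conclusion $G_z^{\alpha}(w) = G_h(z^{-\alpha}w) + o(1)$ as $z \to \infty$ means: for every $\epsilon > 0$ there is $R > 0$ such that $|G_z^{\alpha}(w) - G_h(z^{-\alpha}w)| < \epsilon$ whenever $|z| > R$, with no restriction on $w$ (equivalently, on $c = z^{-\alpha}w$). So the content beyond Lemma~\ref{delta > d: asy lem} is precisely the uniformity in $c$.

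First I would recall the two ingredients already in hand. Proposition~\ref{delta > d: unif} gives uniform convergence of $G_n$ to $G_z^{\alpha}$ on $V \times \mathbb{C}$ for any $V$ with $\overline{V} \subset A_p$; applying it to a set $V$ containing $\{|z| > R_0\}$ for some large $R_0$, we get an integer $N_1$ and can assume $|G_z^{\alpha}(w) - G_{N_1}(z,w)| < \epsilon/3$ for all $(z,w)$ with $|z| > R_0$. On the polynomial side, the convergence of $G_h^n = d^{-n}\log^{+}|h^n|$ to $G_h$ is uniform on all of $\mathbb{C}$ (standard one-variable fact), so there is $N_2$ with $|G_h(c) - G_h^{N_2}(c)| < \epsilon/3$ for all $c$. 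Set $N = \max\{N_1, N_2\}$; it remains to compare $G_N(z,w)$ with $G_h^N(c)$ uniformly in $c$ for $|z|$ large. This is where the fixed-$N$ asymptotic relation $Q_z^N(cz^{\alpha}) = h^N(c) z^{\alpha\delta^N} + o(z^{\alpha\delta^N})$ enters: for the single fixed iterate $N$, the error term $o(z^{\alpha\delta^N})$ is a polynomial in $z$ and $c$ of degree in $z$ strictly less than $\alpha\delta^N$, with coefficients polynomial in $c$; dividing by $z^{\alpha\delta^N}$ and using $p^N(z) \sim z^{\delta^N}$, one gets $|G_N(z,w) - G_h^N(c)| \to 0$ as $z \to \infty$, but a priori only locally uniformly in $c$.

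The main obstacle is exactly this last point: promoting the error control to be uniform in $c \in \mathbb{C}$. The difficulty is that the lower-order terms, although lower order in $z$, can dominate when $|c|$ is large relative to $|z|$. I would handle this by splitting into two regimes. For $|c| \le M$ (a compact set in the $c$-plane), the estimate $|Q_z^N(cz^{\alpha})/z^{\alpha\delta^N} - h^N(c)| \to 0$ is uniform by compactness, giving $|G_N(z,w) - G_h^N(c)| < \epsilon/3$ once $|z| > R_1(M)$. For $|c| > M$ with $M$ chosen large, we use Lemma~\ref{delta > d: main lem}: on $W_R = \{|z| > R, |c| > R\}$ we have $|q(z,w)/p(z)^{\alpha}| \sim |w/z^{\alpha}|^d$, so both $G_z^{\alpha}(w)$ and $G_h(c)$ behave like $\log|c|$ up to bounded error — more precisely, inequality~(\ref{delta > d: eq2}) gives $|G_z^{\alpha}(w) - \log|c|| < C_R$ on $W_R$, and the analogous one-variable estimate gives $|G_h(c) - \log|c||$ bounded for $|c| > R$; one then refines by noting that on $W_R$ both functions are obtained by the same iteration scheme ($|q/p^{\alpha}| \sim |w/z^{\alpha}|^d$ versus $|h(c)| \sim |c|^d$), so the per-step discrepancies are $o(1)$ as $z \to \infty$ uniformly, and summing the geometric series in $d^{-n}$ keeps the total discrepancy small. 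Choosing $M = R$ large enough that the $W_R$-regime estimate beats $\epsilon$ on $\{|c| > M\} \cap \{|z| > R\}$, and taking $R > \max\{R_0, R_1(M)\}$, combines the two regimes.

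Finally I would assemble the pieces: for $|z| > R$ and arbitrary $w$ (hence arbitrary $c$),
\[
|G_z^{\alpha}(w) - G_h(z^{-\alpha}w)| \le |G_z^{\alpha}(w) - G_N(z,w)| + |G_N(z,w) - G_h^N(c)| + |G_h^N(c) - G_h(c)| < \epsilon
\]
in the bounded-$c$ regime, while the $W_R$-estimate covers the unbounded-$c$ regime directly. Since $G_h(c) = G_h(z^{-\alpha}w)$ is a well-defined function of $(z,w)$ by the symmetry of $G_h$ under the relevant roots of unity (as noted before Lemma~\ref{delta > d: asy lem}), the statement makes sense even when $\alpha$ is not an integer, and the non-integer case is handled by replacing the polynomial identity for $Q_z^n$ with its absolute-value version $|Q_z^n(cz^{\alpha})| = |h^n(c)||z|^{\alpha\delta^n} + o(|z|^{\alpha\delta^n})$, exactly as at the end of the proof of Lemma~\ref{delta > d: asy lem}. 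This yields $G_z^{\alpha}(w) = G_h(z^{-\alpha}w) + o(1)$ as $z \to \infty$, uniformly in $w$.
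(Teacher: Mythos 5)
Your proposal is correct and follows essentially the same route as the paper: split $\{|z|>R\}$ into the region $W_R$ (large $|c|$), where both $G_z^{\alpha}(w)$ and $G_h(c)$ are compared to $\log|c|$ via inequality~(\ref{delta > d: eq2}) and the standard $G_h(c)=\log|c|+o(1)$, and a bounded-$|c|$ region, where the uniformity-on-compacts argument of Lemma~\ref{delta > d: asy lem} applies. The only superfluous element is the ``per-step discrepancies'' refinement in your $W_R$ regime; the direct triangle inequality through $\log|c|$ already yields the estimate there, and the paper uses exactly that.
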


\begin{proof}
We prove that for any $\epsilon > 0$ there exists $R>0$ 
such that 
\begin{eqnarray}\label{delta > d: eq6}
| G_z^{\alpha}(w) - G_h(z^{- \alpha} w) | < \epsilon 
\end{eqnarray}
on $\{ |z| > R \}$.
Separating the region $\{ |z| > R \}$ 
into two open sets whose union contains the region,
we get this inequality. 

First, 
we show that this inequality holds on $W_R$ for large $R$,
where $W_R = \{ |z| > R, |w| > R |z|^{\alpha} \}$.
Inequality $(\ref{delta > d: eq2})$ in the proof of 
Theorem \ref{delta > d: main thm} implies that
$|G_z^{\alpha}(w) - \log|z^{- \alpha} w|| < \epsilon /2$ 
on $W_R$ for large $R$.
Since
$| \log|z^{- \alpha} w| - G_h(z^{- \alpha} w) | < \epsilon /2$ 
on $W_R$ for large $R$,
there exists $R_1 > 0$ such that 
inequality $(\ref{delta > d: eq6})$ holds on $W_{R_1}$.

Next, 
let $V_R = \{ |z| > R, |w| < 2 R_1 |z|^{\alpha} \}$.
From a similar argument as the proof of 
Lemma \ref{delta > d: asy lem},
it follows that
$G_z^{\alpha}(w) = G_h (z^{- \alpha} w) + o(1) \text{ on } V_R$,
since the projection of $V_R$ to $\mathbb{C}$
by the multi-valued function $z^{- \alpha} w$
is a relatively compact subset of $\mathbb{C}$.
Hence there exists $R_2 \geq R_1$ such that 
inequality $(\ref{delta > d: eq6})$ holds on $V_{R_2}$.
Since $W_{R_2}$ and $V_{R_2}$ cover  
the region $\{ |z| > R_2 \}$,
inequality $(\ref{delta > d: eq6})$ holds on 
$\{ |z| > R_2 \}$.
\end{proof}

\section{$\delta < d$}

Next we study the case $\delta < d$,
assuming that $\gamma \neq 0$.
We generalize the definition of $\alpha$,
and prove the existence 
of $G_z$ on $\mathbb{C}^2$ as the nondegenerate case,
which is continuous and plurisubharmonic on $A_p \times \mathbb{C}$.
Moreover, 
$G_z = G_f = G_f^{\alpha}$ on $\mathbb{C}^2$
and $G_z = G_z^{\alpha}$ on $A_p \times \mathbb{C}$.
Unlike the nondegenerate case,
$\alpha$ can be negative and 
$\tilde{f}$ is not algebraically stable unless $f$ is nondegenerate.
However, the indeterminacy point $p_{\infty}$ 
is still attracting in some sense,
and it follows that $f \sim (z^{\delta}, z^{\gamma} w^d)$
on a region in the attracting basin of $p_{\infty}$,
which induces the results above.
To obtain uniform convergence to $G_z$,
we have to consider unusual plurisubharmonic functions
that converge to $G_z$.
In particular,
if $\alpha = \gamma /(\delta - d)$,
then the convergence to $G_z^{\alpha}$ is uniform 
on a region in $A_p \times \mathbb{C}$,
and we get the asymptotics of $G_z^{\alpha}$ near infinity.   

This section is divided into three subsections:
the definition and the properties of $\alpha$,
the existence of $G_z$ and other Green functions,
and the uniform convergence to $G_z$. 
We remark that many claims hold even if $\gamma = 0$.

\subsection{Weights}

We generalize the definition of $\alpha$ as
\[
\min \left\{ l \in \mathbb{Q} \ \Big| 
\begin{array}{lcr}
\gamma + ld \geq l \delta \text{ and } \gamma + ld \geq n_j + l m_j \text{ for} \\
\text{any integers $n_j$ and $m_j$ s.t. } z^{n_j} w^{m_j} \text{ is} \\
\text{a term in $q$ with nonzero coefficient}
\end{array} 
\right\},
\]
which is equal to
\[
\max \left\{ \dfrac{-\gamma}{d - \delta}, 
\dfrac{n_j - \gamma}{d - m_j} \ \Big|
\begin{array}{lcr}
z^{n_j} w^{m_j} \text{ is a term in } q \text{ with} \\
\text{nonzero coefficient s.t. } m_j < d
\end{array} 
\right\}.
\]
By definition,
$- \gamma \leq \alpha \leq \deg_z q - \gamma$
and $\alpha < \deg q - \gamma$. 
Moreover, $\alpha$ induces inequalities for the degree growth of $f$,
given in Section 7. 

Let the weight of $z^n w^m$ be $n + \alpha m$.
Then the weight of $q$ is $\gamma + \alpha d$ and 
the weight of $Q_z^n(w)$ is $\gamma_n + \alpha d^n$,
where $\gamma_n =(\delta^{n - 1} + \delta^{n - 2} d + \cdots + d^{n - 1}) \gamma$,
which coincides with $\alpha \delta^n$ if $\alpha = \gamma /(\delta - d)$. 
Let $h$ be the weighted homogeneous part of $q$ 
of highest weight $\gamma + \alpha d$,
which contains $z^{\delta} w^d$.
If $\alpha = \gamma /(\delta - d)$, then
$z^{\alpha {\delta}^n} h^n(z^{- \alpha} w)$ is the weighted homogeneous part of 
$Q_z^n (w)$ of weight $\alpha {\delta}^n$.
If $\alpha > \gamma /(\delta - d)$, then
$z^{\gamma_n} (z^{- \gamma} h(z,w))^{d^{n-1}}$ is the weighted homogeneous part of 
$Q_z^n (w)$ of weight $\gamma_n + \alpha d^n$.

The rational map $\tilde{f}$ on $\mathbb{P} (r,s,1)$ is not algebraically stable
for any positive integers $r$ and $s$;
it contracts $L_{\infty} - I_{\tilde{f}}$ 
to the indeterminacy point $p_{\infty}$. 
However, 
$p_{\infty}$ attracts most points in $A_p \times \mathbb{C}$;
more precisely, it attracts all points
which converge to $(L_{\infty} - I_{\tilde{f}}) \cup \{ p_{\infty} \}$.

As in Section 4.1,
we end this subsection with two examples:
monomial maps, and polynomial skew products 
that are semiconjugate to polynomial products. 

\begin{ex}[monomial maps]
Let $f = (z^{\delta}, z^{\gamma} w^d)$ and $\delta < d$.
Then $\alpha = \gamma /(\delta - d) \leq 0$ 
and $f^n = (z^{\delta^n}, z^{\gamma_n} w^{d^n})$,
where
\[
\gamma_n = (\delta^{n - 1} + \delta^{n - 2} d + \cdots + d^{n - 1}) \gamma
= - \alpha d^n \left\{ 1 - \left( \dfrac{\delta}{d} \right)^n \right\}
\]
Hence $G_z = G_z^{\alpha} = G_f = G_f^{\alpha} = \log^{+} |z^{- \alpha} w|$,
which is continuous and plurisubharmonic on $\mathbb{C}^{2}$.
\end{ex}

\begin{ex} 
Let $f = (z^{\delta}, q(z,w))$ be a polynomial skew product,
where $q = z^{\gamma} w^d + O_z(w^{d-1})$, $\gamma \neq 0$ and $\delta < d$, 
that is semiconjugate to a polynomial product $f_0 = (z^{\delta}, h(w))$
by $\pi = (z^r, z^{-s} w)$ for some positive integers $r$ and $s$;
$f \pi = \pi f_0$.
Note that $h(w) = q(1,w)$, 
which is divisible by $w$.
It follows from the identity $q(z^r, z^{-s} w) = z^{-s \delta} q(1,w)$
that $\alpha = - s/r < 0$ and 
$q(z,w) = z^{- \alpha \delta} h(z^{- \alpha} w)$.
Moreover,
$f^n = ( z^{\delta^n}, z^{- \alpha \delta^n} h^n (z^{- \alpha} w))$.
Hence $z_n^{- \alpha} w_n = h^n (z^{- \alpha} w)$ and so
\[
G_z^{\alpha}(w) =
\lim_{n \to \infty} \frac{1}{d^n}
\log^{+} \left| z_n^{- \alpha} w_n \right|
= G_h \left( z^{- \alpha} w \right)
\text{ on } \mathbb{C}^{2}, 
\]
where $(z_n, w_n) = f^n (z, w)$.
Moreover,
$G_z = G_z^{\alpha} = G_f = G_f^{\alpha}$,
which is continuous and plurisubharmonic on $\mathbb{C}^{2}$.
\end{ex}

\subsection{Existence of Green functions}

We defined $W_R$ 
as $\{ |z| > R, |w| > R|z|^{\alpha} \}$ if $\gamma \neq 0$. 
The following important lemma follows from 
the definition of $\alpha$.

\begin{lem}\label{delta < d: main lem}
If $\delta < d$, then
$q(z,w) \sim z^{\gamma} w^d$ on $W_R$  for large $R > 0$,
and $f$ preserves $W_R$; that is, $f(W_R) \subset W_R$.
\end{lem}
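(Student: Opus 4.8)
The plan is to mimic the computation in the proof of Lemma \ref{delta > d: main lem}, working with the multi-valued variable $c = z^{-\alpha} w$ (whose modulus $|c| = |z|^{-\alpha}|w|$ is well defined), so that on $W_R = \{|z| > R,\ |c| > R\}$ both $|z|$ and $|c|$ are large. Write $q(z,w) = \sum_j q_j z^{n_j} w^{m_j}$ with $q_j \neq 0$, and isolate the leading monomial $z^{\gamma} w^d$ (recall $b$ is monic, so its coefficient is $1$). For every other term, substituting $w = c z^{\alpha}$ gives
\[
\left| \dfrac{q_j z^{n_j} w^{m_j}}{z^{\gamma} w^{d}} \right|
= |q_j|\,\bigl| c^{\,m_j - d}\bigr|\,\bigl| z^{\,n_j + \alpha m_j - \gamma - \alpha d}\bigr|.
\]
First I would check, straight from the definition of $\alpha$, that for each such term at least one of the inequalities $m_j < d$ and $n_j + \alpha m_j < \gamma + \alpha d$ holds: if $m_j = d$ then the defining condition $\gamma + \alpha d \geq n_j + \alpha d$ forces $n_j \leq \gamma$, and since $z^{\gamma} w^d$ is itself the unique term of $q$ with $w$-degree $d$ and this $w^d$-coefficient is monic, any other term with $m_j = d$ would have $n_j < \gamma$, giving the strict inequality in the exponent of $z$. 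Hence, with both $|z| \to \infty$ and $|c| \to \infty$, each ratio above tends to $0$, and so
\[
\left| \dfrac{q(z,w)}{z^{\gamma} w^{d}} \right| = 1 + o(1)
\quad\text{as } |z|,\,|c| \to \infty,
\]
which is exactly the asymptotic relation $q(z,w) \sim z^{\gamma} w^{d}$ on $W_R$ for $R$ large.

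Next I would derive from this the invariance $f(W_R) \subset W_R$. The first-coordinate condition $|p(z)| > R$ on $\{|z| > R\}$ for large $R$ is immediate from $p(z) \sim z^{\delta}$. For the second coordinate I need $|q(z,w)| > R\,|p(z)|^{\alpha}$ on $W_R$. Using the asymptotic just proved together with $p(z) \sim z^{\delta}$, one has on $W_R$ (after possibly enlarging $R$) a two-sided estimate of the form
\[
\tfrac{1}{2}\,|z|^{\gamma}|w|^{d} < |q(z,w)| < 2\,|z|^{\gamma}|w|^{d},
\qquad
\tfrac{1}{2}\,|z|^{\delta\alpha} < |p(z)|^{\alpha} < 2\,|z|^{\delta\alpha}
\]
(the second pair of inequalities read the same way whether $\alpha$ is positive or negative, since they come from $|p(z)|\sim|z|^\delta$). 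Then
\[
\dfrac{|q(z,w)|}{|p(z)|^{\alpha}}
> \dfrac{1}{4}\, |z|^{\gamma - \delta\alpha}\,|w|^{d}
= \dfrac{1}{4}\,|w|^{d - \alpha}\bigl(|z|^{-\alpha}|w|\bigr)^{\alpha}\big/|z|^{\,\alpha\delta - \alpha\gamma\,/\,?}
\]
— more cleanly, writing $|w| = |c|\,|z|^{\alpha}$ gives $|q(z,w)|/|p(z)|^{\alpha} > \tfrac14 |z|^{\gamma}|c|^{d}|z|^{\alpha d}|z|^{-\alpha\delta} = \tfrac14 |c|^{d}|z|^{\gamma + \alpha(d-\delta)} = \tfrac14 |c|^{d}$ when $\alpha = \gamma/(\delta-d)$, and in general $\gamma + \alpha(d-\delta) \geq 0$ by the defining inequality for $\alpha$, so the exponent of $|z|$ is nonnegative and the whole quantity is $\geq \tfrac14 |c|^{d} \geq \tfrac14 R^{d} > R$ since $d \geq 2$ and $R$ is large. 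This gives $|q(z,w)| > R|p(z)|^{\alpha}$, completing the inclusion.

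The one step that requires genuine care — the main obstacle — is the bookkeeping with the weight exponents when $\alpha$ can be negative and, separately, when $\alpha$ is not an integer so that $z^{\alpha}$ and $c$ are only defined up to an $r$-th root of unity. For the non-integrality I would note, exactly as the surrounding text does, that only $|c|$ enters every estimate, so the branch ambiguity is harmless; for the sign of $\alpha$ I would keep all inequalities in terms of absolute values and invoke the two defining conditions on $\alpha$ (the one governing $z^{\delta}$ versus $z^{\gamma}w^{d}$, namely $\gamma + \alpha d \geq \alpha\delta$, and the one governing the remaining monomials) to guarantee the relevant exponents of $|z|$ are nonnegative on $W_R$. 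Everything else is the routine "dominant term" argument already carried out in Lemma \ref{delta > d: main lem}.
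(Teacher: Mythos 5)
Your proposal is correct and follows essentially the same approach as the paper's proof: introduce $|c| = |z^{-\alpha}w|$, show each non-leading monomial divided by $z^{\gamma}w^{d}$ tends to $0$ using the definition of $\alpha$ (including the observation that for terms with $m_j = d$ other than the leading one, $n_j < \gamma$), and then use $\gamma + \alpha(d-\delta) \geq 0$ together with $d \geq 2$ to get $|q(z,w)| > R|p(z)|^{\alpha}$. The only differences are cosmetic: you spell out the $m_j = d$ case which the paper asserts tersely, and there is a garbled intermediate expression in the invariance computation that you immediately correct; neither affects the substance.
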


\begin{proof}
We explain this claim by using the notation $|c| = |z^{- \alpha} w|$.
Let $z^{n_j} w^{m_j}$ be a term of $q$ with nonzero coefficient.
If $z^{n_j} w^{m_j} \neq z^{\gamma} w^d$, then
\[
\left| \dfrac{z^{n_j} w^{m_j}}{z^{\gamma} w^d} \right| 
= \left| \dfrac{c^{m_j} z^{n_j + \alpha m_j}}{c^d z^{\gamma + \alpha d}} \right| 
\to 0 \text{ as $z$ and $c$} \to \infty 
\] 
since at least one of the inequalities 
$d > m_j$ and $\gamma + \alpha d > n_j + \alpha m_j$ holds.
Therefore, $q(z,w) \sim z^{\gamma} w^d$ on $W_R$.
In other word, 
\begin{equation}\label{delta < d: eq1}
r_1 < \left| \dfrac{q(z,w)}{z^{\gamma} w^d} \right| < r_2 
\end{equation}
on $W_R$ for some positive constants $r_1 < 1 < r_2$.

Let us show that $f$ preserves $W_R$.
It is enough to show that 
$|q(z,w)| > R|p(z)|^{\alpha}$ on $W_R$ for large $R$.
Since $p(z) \sim z^{\delta}$,
\[
\left| \dfrac{q(z,w)}{p(z)^{\alpha}} \right| 
= \left| \dfrac{z^{\delta}}{p(z)} \right|^{\alpha} 
\cdot \left| \dfrac{z^{\gamma + \alpha d}}{z^{\alpha \delta}} \right| 
\cdot \left| \dfrac{q(z,w)}{z^{\gamma + \alpha d}} \right| 
\sim \left| \dfrac{z^{\gamma + \alpha d}}{z^{\alpha \delta}} \right| \cdot |c|^d.
\]
This is larger than $R$ if it is large enough
since $\gamma + \alpha d \geq \alpha \delta$ and $d \geq 2$.
\end{proof}

This lemma implies that 
$W_R$ is included in the attracting basin of $p_{\infty}$.
Let $A_f$ be the union of preimages $f^{-n} (W_R)$. 

\begin{theorem}\label{delta < d: main thm}
If $\delta < d$, then
the limit $G_z$ is defined, continuous and pluriharmonic on $A_f$.
Moreover, 
$G_z \sim \log |w|$ as $w \to \infty$ for fixed $z$ in $A_p$, 
and $G_z$ tends to $0$ as $(z,w)$ in $A_f$ 
tends to $\partial A_f - J_p \times \mathbb{C}$.  
\end{theorem}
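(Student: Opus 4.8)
The plan is to argue exactly along the lines of the proof of Theorem \ref{delta > d: main thm}, with Lemma \ref{delta < d: main lem} playing the role of Lemma \ref{delta > d: main lem}. The only genuinely new feature is an extra term of the form $\gamma\log|z_n|$ in the telescoping estimate on $W_R$; unlike in the case $\delta>d$ it does not vanish, but its contribution is summable precisely because $\delta<d$, at the price that the convergence on $W_R$ is only locally uniform.

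First I would prove convergence on $W_R$. Put $G_n(z,w)=d^{-n}\log|z_n^{-\alpha}w_n|$, where $(z_n,w_n)=f^n(z,w)$. Since $f(W_R)\subset W_R$ and $W_R=\{|z|>R,\ |z^{-\alpha}w|>R\}$, each $G_n$ is well defined, bounded below by $d^{-n}\log R>0$, and pluriharmonic on $W_R$, because $\log|z_n^{-\alpha}w_n|=\log|w_n|-\alpha\log|z_n|$ and $w_n$ never vanishes there. From inequality $(\ref{delta < d: eq1})$ and $p(z)=z^{\delta}+O(z^{\delta-1})$ one obtains, uniformly on $W_R$,
\[
\log|z_{n+1}^{-\alpha}w_{n+1}|-d\log|z_n^{-\alpha}w_n|=\bigl(\gamma+\alpha(d-\delta)\bigr)\log|z_n|+O(1),
\]
whence $|G_{n+1}-G_n|\le d^{-(n+1)}\bigl((\gamma+\alpha(d-\delta))\log|z_n|+\log r\bigr)$ for some constant $r>1$. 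Here $\gamma+\alpha(d-\delta)\ge 0$ by the definition of $\alpha$, and $\log|z_n|\le\delta^n(\log|z|+C)$ on $W_R$ for a uniform constant $C$ (a standard consequence of $p(z)=z^{\delta}+O(z^{\delta-1})$ and $|z_n|>R$); since $\delta<d$, the series $\sum_n d^{-n}\delta^n$ converges, so $\sum_n(G_{n+1}-G_n)$ converges absolutely and locally uniformly on $W_R$. Hence $G_n$ converges locally uniformly on $W_R$, and since $d^{-n}\log|z_n|\to 0$ there (as $\{|z|>R\}\subset A_p$ and $\delta<d$), the sequence $d^{-n}\log^{+}|Q_z^n(w)|$ converges to the same limit; thus $G_z$ is defined, continuous and pluriharmonic on $W_R$. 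Summing the displayed estimate gives
\[
\bigl|\,G_z(w)-\log|z^{-\alpha}w|\,\bigr|\le\frac{\gamma+\alpha(d-\delta)}{d-\delta}\,(\log|z|+C)+\frac{\log r}{d-1}
\qquad\text{on }W_R,
\]
a bound independent of $w$; in particular $G_z\sim\log|w|$ as $w\to\infty$ for fixed $z$ with $|z|>R$.

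Next I would extend $G_z$ from $W_R$ to $A_f=\bigcup_{n\ge0}f^{-n}(W_R)$ by the functional equation $G_{p(z)}(q_z(w))=dG_z(w)$, exactly as in the proof of Theorem \ref{delta > d: main thm}: for $(z,w)\in A_f$ choose $n$ with $f^n(z,w)\in W_R$ and set $G_z(w)=d^{-n}G_{p^n(z)}(Q_z^n(w))$. This is independent of $n$, and continuity and pluriharmonicity on $A_f$ follow since $G_z$ is continuous and pluriharmonic on $W_R$ and $f^n$ is holomorphic. The asymptotic $G_z\sim\log|w|$ for fixed $z\in A_p$ then follows as in Theorem \ref{delta > d: main thm}, by iterating a bounded number of times and applying the estimate on $W_R$.

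Finally, for the boundary behaviour I would take $E=\{|z^{-\alpha}w|=R,\ |z|>R\}\subset\partial W_R$, on which $\log|z^{-\alpha}w|=\log R$. On $f^{-n}(E)$ we have $f^n(z,w)\in E$, so the estimate on $W_R$ gives
\[
\bigl|\,G_z(w)-d^{-n}\log R\,\bigr|=d^{-n}\,\bigl|\,G_{p^n(z)}(Q_z^n(w))-\log R\,\bigr|\le d^{-n}\Bigl(\tfrac{\gamma+\alpha(d-\delta)}{d-\delta}(\log|z_n|+C)+\tfrac{\log r}{d-1}\Bigr).
\]
Near a point $\zeta\in\partial A_f-J_p\times\mathbb{C}$ the base coordinate stays in a compact subset $K\subset A_p$, on which $\log^{+}|p^n(z)|\le C_K\delta^n$; hence $d^{-n}\log|z_n|\to 0$, again because $\delta<d$. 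Since $f^{-n}(E)$ exhausts $\partial A_f-J_p\times\mathbb{C}$ as $n\to\infty$, it follows that $G_z(w)\to 0$. The main obstacle is exactly this control of the term $(\gamma+\alpha(d-\delta))\log|z_n|$: it is harmless only because $\delta<d$ lets $G_p$ govern the growth of $\log|z_n|$ while the factor $d^{-n}$ absorbs it — which is also why the convergence on $W_R$ is merely locally, not globally, uniform, the one structural difference from the case $\delta>d$.
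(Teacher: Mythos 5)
Your argument is correct and follows essentially the same path as the paper's proof: Lemma \ref{delta < d: main lem} gives the telescoping estimate, summability holds because $\delta<d$, the domain is extended to $A_f$ via the functional equation, and the boundary behaviour is read off from $f^{-n}(E)$. The only difference is cosmetic: you normalize by working with $G_n=d^{-n}\log|z_n^{-\alpha}w_n|$ (as in the $\delta>d$ case) and then observe that $d^{-n}\log|z_n|\to 0$ to recover $G_z$, whereas the paper works directly with $d^{-n}\log|Q_z^n|$.
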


\begin{proof}
The proof is similar to that of Theorem {\rmfamily \ref{delta > d: main thm}}.
We first prove the locally uniform convergence of $G_n$ to $G_z$ on $W_R$,
where $G_n = d^{-n} \log |Q_z^n|$.
It follows from inequality $(\ref{delta < d: eq1})$ 
in the proof of Lemma {\rmfamily \ref{delta < d: main lem}} that,
for any $(z,w)$ in $W_R$,
\[
r_1 |(p^n(z))^{\gamma}| 
< \left| \dfrac{Q_z^{n+1} (w)}{Q_z^n(w)^{d}} \right| 
< r_2 |(p^n(z))^{\gamma}|. 
\]
Hence, for any $(z,w)$ in $W_R$ and for any positive integer $n$,
\[
\left| G_{n+1} - G_n \right| 
= \left| \dfrac{1}{d^{n+1}} \log \dfrac{|Q_z^{n+1}(w)|}{|Q_z^n(w)|^d}  \right|
< \dfrac{1}{d^{n+1}} \log \left( r|p^n(z)|^{\gamma} \right),
\]
where $\log r = \max \{ - \log r_1, \log r_2 \}$.
Therefore, $G_n$ converges locally uniformly to $G_z$ on $W_R$,
which is continuous and pluriharmonic. 
By the inequality above, for any $(z,w)$ in $W_R$,
\[
| G_z(w) - \log |w| | < \sum_{n=0}^{\infty} 
\dfrac{\gamma}{d^{n+1}} \log |p^n(z)| + \dfrac{\log r}{d - 1}.  
\]
Although this infinite sum converges since $\deg p = \delta < d$,
we can restate this inequality to a simpler form.
Since $p(z) \sim z^{\delta}$,
there exists a constant $r_0 > 1$ such that
$|p(z)| < r_0 |z|^{\delta}$ and so $|p^n(z)| < (r_0 |z|)^{\delta^n}$
if $|z| > R$.
Hence
\begin{equation}\label{delta < d: eq2}
\left| G_z(w) - \log |w| \right| < \dfrac{\gamma}{d - \delta} \log |z| + C_R
\end{equation}
for any $(z,w)$ in $W_R$,
where 
\[
C_R = \dfrac{\gamma}{d - \delta} \log r_0 + \dfrac{\log r}{d - 1}.
\]
In particular,
$G_z \sim \log |w|$ as $w \to \infty$ for fixed $z$ in $W_R$.
We can extend the domain of $G_z$ from $W_R$ to $A_f$ 
by the equation $G_z(w) = d^{-n} G_{p^n(z)} (Q_z^n(w))$.

Next, we 
calculate the asymptotic value of $G_z(w)$ as $(z,w)$ in $A_f$ 
tends to $\partial A_f - J_p \times \mathbb{C}$.
By inequality $(\ref{delta < d: eq2})$, 
\[
\left| G_{p^n(z)} (Q_z^n(w)) - \log R |p^n(z)|^{\alpha} \right| 
\leq \frac{\gamma}{d - \delta} \log |p^n(z)| + C_R 
\]
for any $(z,w)$ in $f^{-n} (E)$,
where $E = \{ |w| = R|z|^{\alpha}, |z| > R \} \subset \partial W_R$.
Thus it follows from the equation $G_z(w) = d^{-n} G_{p^n(z)} (Q_z^n(w))$
that 
\begin{equation}\label{delta < d: eq3}
\left| G_{z} (w) - \dfrac{1}{d^{n}} \log R |p^n(z)|^{\alpha} \right| 
\leq \dfrac{1}{d^{n}} 
\left( \frac{\gamma}{d - \delta} \log |p^n(z)| + C_R \right) 
\end{equation} 
on $f^{-n} (E)$.
Since $\deg p = \delta < d$
and since $f^{-n} (E)$ converges to 
$\partial A_f - J_p \times \mathbb{C}$ as $n$ tends to infinity, 
$G_z(w)$ converges to $0$ 
as $(z,w)$ in $A_f$ tends to $\partial A_f - J_p \times \mathbb{C}$. 
\end{proof}

Moreover, it follows that
$G_z = \log |z^{\gamma /(d - \delta)} w| + o(1)$ on $W_R$;
see Remark {\rmfamily \ref{delta < d: asy of G on W_R} in the next subsection.
Since $G_z = 0$ on $B_f$, 
the theorem above implies the following.

\begin{cor}\label{delta < d; main cor}
If $\delta < d$, then
$G_z$ is defined on $\mathbb{C}^2$,
which is continuous and plurisubharmonic on $A_p \times \mathbb{C}$.
Moreover,
$G_z = G_f = G_f^{\alpha}$ on $\mathbb{C}^2$,
and $G_z = G_z^{\alpha}$ on $A_p \times \mathbb{C}$.
\end{cor}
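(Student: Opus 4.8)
The plan is to deduce this corollary from Theorem~\ref{delta < d: main thm} together with the existence of $G_z$ on $K_p\times\mathbb{C}$ proved in~\cite{fg} and one short elementary estimate. Since $\delta<d$ we have $\lambda=d$, and $\mathbb{C}^2=(K_p\times\mathbb{C})\sqcup A_f\sqcup B_f$ with $B_f=A_p\times\mathbb{C}-A_f$; Theorem~\ref{delta < d: main thm} handles $A_f$ and~\cite{fg} handles $K_p\times\mathbb{C}$, so the only new point needed for the existence assertion is that $G_z\equiv 0$ on $B_f$.

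First I would prove $G_z\equiv 0$ on $B_f$. If $(z,w)\in B_f$ then $f^n(z,w)=(z_n,w_n)\notin W_R$ for all $n$; since $z\in A_p$ we have $|z_n|=|p^n(z)|\to\infty$, so for all large $n$ the condition $f^n(z,w)\notin W_R=\{|z|>R,\ |w|>R|z|^{\alpha}\}$ forces $|w_n|\le R|z_n|^{\alpha}$. Hence $\log^{+}|Q_z^n(w)|=\log^{+}|w_n|$ is bounded by $\log R$ when $\alpha\le 0$ (taking $R>1$) and by $\log R+\alpha\log^{+}|p^n(z)|=O(\delta^n)$ when $\alpha>0$ (because $\delta^{-n}\log^{+}|p^n(z)|\to G_p(z)$); in either case $d^{-n}\log^{+}|Q_z^n(w)|\to 0$ since $\delta<d$. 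Combined with Theorem~\ref{delta < d: main thm} and~\cite{fg}, this shows $G_z$ is defined on all of $\mathbb{C}^2$.

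Next, continuity and plurisubharmonicity on $A_p\times\mathbb{C}$. Continuity: $G_z$ is continuous on the open set $A_f$ by Theorem~\ref{delta < d: main thm}, is identically $0$ on $B_f$, and the boundary statement of Theorem~\ref{delta < d: main thm} gives $G_z\to 0$ as $(z,w)\in A_f$ tends to $\partial A_f-J_p\times\mathbb{C}$; since $J_p\subset K_p$ is disjoint from $A_p$, this covers every point of $\partial A_f\cap(A_p\times\mathbb{C})$, so the two definitions glue continuously. For plurisubharmonicity I would use that $G_z\ge 0$ on $A_p\times\mathbb{C}$ and that $G_z$ is pluriharmonic on $A_f$ and on $\mathrm{int}\,B_f$, while $A_p\times\mathbb{C}=A_f\cup\mathrm{int}\,B_f\cup(\partial A_f\cap(A_p\times\mathbb{C}))$: restricting to any complex line $L$, on each component of $L\cap(A_p\times\mathbb{C})$ the local sub-mean-value inequality holds at every point — with equality where $G_z$ is harmonic, and trivially at a point $x_0\in\partial A_f$ because there $G_z(x_0)=0\le$ the circle average of the nonnegative function $G_z$. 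Being continuous (hence upper semicontinuous), $G_z|_{L\cap(A_p\times\mathbb{C})}$ is subharmonic, and since $L$ is arbitrary, $G_z$ is plurisubharmonic on $A_p\times\mathbb{C}$.

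Finally, the identifications. As $\delta<d$, $d^{-n}\log^{+}|p^n(z)|=(\delta/d)^n\,\delta^{-n}\log^{+}|p^n(z)|\to 0$ everywhere; writing $|f^n(z,w)|=\max\{|p^n(z)|,|Q_z^n(w)|\}$ and $|f^n(z,w)|_{\alpha}=\max\{|p^n(z)|^{\max\{\alpha,0\}},|Q_z^n(w)|\}$ and passing to $d^{-n}\log^{+}$, the $p$-contribution vanishes in the limit, so $G_f=G_f^{\alpha}=G_z$ on $\mathbb{C}^2$. For $G_z^{\alpha}$ on $A_p\times\mathbb{C}$, the $1$-Lipschitz property of $t\mapsto\max\{t,0\}$ gives $\bigl|\log^{+}|Q_z^n(w)/p^n(z)^{\alpha}|-\log^{+}|Q_z^n(w)|\bigr|\le|\alpha|\log|p^n(z)|$ for $n$ large (where $|p^n(z)|>1$), and $d^{-n}\log|p^n(z)|\to 0$ as above, so $G_z^{\alpha}=G_z$ on $A_p\times\mathbb{C}$. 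I expect no serious obstacle in this corollary given Theorem~\ref{delta < d: main thm}; the only step needing a little care is the persistence of the sub-mean-value inequality across $\partial A_f$, and that is immediate from $G_z\ge 0$ together with $G_z=0$ on $B_f$.
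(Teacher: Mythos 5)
Your proof is correct and follows essentially the same approach as the paper, which disposes of the corollary in one line (``Since $G_z = 0$ on $B_f$, the theorem above implies the following.''). You have simply filled in the details the paper leaves implicit: why $G_z\equiv 0$ on $B_f$ (the constraint $|Q_z^n(w)|\le R|p^n(z)|^{\alpha}$ together with $\delta<d$), the gluing of $A_f$ and $B_f$ across $\partial A_f - J_p\times\mathbb{C}$ for continuity and the sub-mean-value inequality, and the $d^{-n}\log^{+}|p^n(z)|\to 0$ estimates that collapse $G_f$, $G_f^{\alpha}$ and $G_z^{\alpha}$ onto $G_z$.
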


We end this subsection with a description of 
the dynamics of $\tilde{f}$ on $\mathbb{P} (r,s,1)$ 
as in Section 4.2.
By definition,
\[
\tilde{f} [z:w:t] = 
\left[ t^{r(d - \delta)} \{ z^{\delta} + tu(z,t) \} : 
       \dfrac{h(z,w) + tv(z,w,t)}{t^{r \gamma}}: t^d \right]
\]
\[
= \left[ z^{\delta} + tu(z,t) : 
\dfrac{h(z,w) + tv(z,w,t)}{t^{r \gamma + sd - s \delta}} : t^{\delta} \right]
\]
\[
= [ t^{\frac{r}{s}A} \{ z^{\delta} + tu(z,t) \} :
h(z,w) + tv(z,w,t) : t^{\frac{1}{s}A + \delta} ],
\]
where $u$ and $v$ are polynomials,
$h$ is the weighted homogeneous part of $q$ of weight $sd/r$,
and $A = r \gamma + sd - s \delta > 0$.
The polynomial $h$ contains $z^{\gamma} w^d$
if and only if $s/r \geq \alpha$,
which coincides with $z^{\gamma} w^d$ if $s/r > \alpha$.
Since $I_{\tilde{f}} = \{ [z:w:0] : h(z,w) = 0 \}$ 
and $h$ is divisible by $z$,
the point $p_{\infty} = [0:1:0]$ is always an indeterminacy point.
Note that $\tilde{f}$ has other indeterminacy points 
than $p_{\infty}$ if $s/r \geq \alpha$.
In particular, 
$I_{\tilde{f}} = \{ p_{\infty}, [1:0:0] \}$ if $s/r > \alpha$.

Recall that $p_{\infty}$ attracts all points
which converge to $(L_{\infty} - I_{\tilde{f}}) \cup \{ p_{\infty} \}$.
For the sets $A_{\tilde{f}}$ and $B_{\tilde{f}}$ as before,
it follows that
$A_{\tilde{f}} = A_f \cup (L_{\infty} - I_{\tilde{f}})$
and $B_{\tilde{f}} = B_f \cup (I_{\tilde{f}} - \{ p_{\infty} \})$.
Since $A_{\tilde{f}}$ is included in the attracting basin of $p_{\infty}$ 
and since $B_{\tilde{f}}$ is the attracting basin of $I_{\tilde{f}} - \{ p_{\infty} \}$,
it follows that $A_{\tilde{f}} \cup \text{int} B_{\tilde{f}} \subset F_{\tilde{f}}$.
Hence the same claim as
Proposition {\rmfamily \ref{delta > d: Fatou and Julia}} holds.

\subsection{Uniformly convergence and Asymptotics}

It seems impossible to prove that 
the convergence of $G_n$ to $G_z$ is uniform,
where $G_n = d^{-n} \log^{+} |Q_z^n|$.
To overcome this problem,
we define
\[
\tilde{G}_n(z,w) = G_n(z,w) 
+ \sum_{j = n}^{\infty} \dfrac{\gamma}{d^{j+1}} \log^{+} |p^j (z)|, 
\]
which converges to $G_z$ on $\mathbb{C}^2$. 

\begin{pro}\label{delta < d: unif}
If $\delta < d$, then
the convergence of $\tilde{G}_n$ to $G_z$ is 
uniform on $V \times \mathbb{C}$,
where $\overline{V} \subset A_p$.
\end{pro}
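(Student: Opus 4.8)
The plan is to follow the scheme of the proof of Proposition~\ref{delta > d: unif}, with $G_n$ replaced by the modified functions $\tilde{G}_n$; recall $G_n=d^{-n}\log^{+}|Q_z^n|$ and that $\tilde{G}_n\to G_z$ on $\mathbb{C}^2$. Two ingredients are needed. The first is a functional equation: from $Q_{p(z)}^n(q_z(w))=Q_z^{n+1}(w)$ and the reindexing $p^{j}(p(z))=p^{j+1}(z)$ in the correction sum one checks directly that $\tilde{G}_n\circ f=d\,\tilde{G}_{n+1}$, hence $\tilde{G}_n\circ f^N=d^N\tilde{G}_{n+N}$ for every $N\ge 0$. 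The second, which is the real reason for introducing $\tilde{G}_n$, is a \emph{uniform} telescoping estimate on $W_R$: shrinking $R$ so that $(\ref{delta < d: eq1})$ and $f(W_R)\subset W_R$ hold and all relevant moduli exceed $1$, one computes on $W_R$
\[
\tilde{G}_{n+1}-\tilde{G}_n=\frac{1}{d^{n+1}}\log\frac{|Q_z^{n+1}(w)|}{|Q_z^n(w)|^{d}\,|p^n(z)|^{\gamma}},
\]
and the subtracted tail $\sum_{j\ge n}\gamma d^{-j-1}\log^{+}|p^j(z)|$ is precisely what cancels the unbounded factor $|p^n(z)|^{\gamma}$ that spoiled the analogous estimate for $G_n$. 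By $(\ref{delta < d: eq1})$ applied at $f^n(z,w)\in W_R$, the argument of the logarithm lies in $(r_1,r_2)$, so $|\tilde{G}_{n+1}-\tilde{G}_n|<\log r/d^{n+1}$ on $W_R$, whence $\tilde{G}_n\to G_z$ uniformly on $W_R$. Using the functional equation together with $G_{p^N(z)}(Q_z^N(w))=d^N G_z(w)$, this propagates to uniform convergence on each $f^{-N}(W_R)$, via $\tilde{G}_n(z,w)-G_z(w)=d^{-N}\bigl(\tilde{G}_{n-N}(f^N(z,w))-G_{p^N(z)}(Q_z^N(w))\bigr)$.

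Next I would carry out the reduction to a sublevel set exactly as in Proposition~\ref{delta > d: unif}. Fix $\epsilon>0$ and $V$ with $\overline{V}$ a compact subset of $A_p$, and set $U=\{(z,w):z\in V,\ G_z(w)\ge\epsilon/3\}$. Along each vertical line $\{z\}\times\mathbb{C}$ with $z\in V$ the function $\tilde{G}_n$ is nonnegative and subharmonic, and $\{w:G_z(w)<\epsilon/3\}$ is bounded because $G_z(w)\sim\log|w|$ at infinity by Theorem~\ref{delta < d: main thm}; its boundary lies in $\{G_z=\epsilon/3\}$. Thus, once uniform convergence on $U$ is established, for $n$ large one has $\tilde{G}_n<2\epsilon/3$ on $\{G_z=\epsilon/3\}$, the maximum principle on vertical slices gives $\tilde{G}_n<2\epsilon/3$ on $\{z\in V,\ G_z<\epsilon/3\}$, and since $0\le G_z<\epsilon/3$ there, $|\tilde{G}_n-G_z|<\epsilon$ on all of $V\times\mathbb{C}$. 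So it remains to prove uniform convergence on $U$, and by the first paragraph this reduces to producing a single integer $N$ with $f^N(U)\subset W_R$.

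This last point is where I expect the main obstacle to lie, and the idea is to run the proof of Theorem~\ref{delta < d: main thm} in reverse. From $(\ref{delta < d: eq3})$, on $f^{-n}(E)$ with $E=\{|w|=R|z|^{\alpha},\ |z|>R\}$ the value $G_z(w)$ is within $d^{-n}\bigl(\tfrac{\gamma}{d-\delta}\log|p^n(z)|+C_R\bigr)$ of $d^{-n}\log\bigl(R|p^n(z)|^{\alpha}\bigr)$. Since $\overline{V}$ is a compact subset of $A_p$, $\delta^{-n}\log^{+}|p^n|$ converges uniformly on $\overline{V}$ to the bounded function $G_p$, and because $\delta<d$ the quantity $d^{-n}\log^{+}|p^n(z)|$ tends to $0$ uniformly over $z\in\overline{V}$; hence the bound above tends to $0$ uniformly over $\overline{V}$, so $G_z<\epsilon/3$ on $f^{-N}(E)\cap(\overline{V}\times\mathbb{C})$ for $N$ large. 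Along each vertical line over $\{|z|>R\}$ one has $\partial W_R=E$, so $f^{-N}(W_R)$ is the region lying on the side of $f^{-N}(E)$ where $G_z$ exceeds the (now $<\epsilon/3$) values it takes on $f^{-N}(E)$; combined with $|p^N(z)|>R$ for $N$ large on $\overline{V}$, this forces $U\subset f^{-N}(W_R)$. The delicate points to verify carefully are the uniformity over $\overline{V}$ of the convergences $\delta^{-n}\log^{+}|p^n|\to G_p$ and $|p^n|\to\infty$, the uniform (in $z\in\overline{V}$) boundedness of the ``interior'' part of each slice where $G_z$ is small, and the behaviour over the finitely many fibers $\{z:\deg_w Q_z^N=0\}$ meeting $\overline{V}$: there $G_z$ is constant in $w$, so such a fiber is entirely inside or entirely outside $U$, and in the former case one enlarges $N$ — still uniformly, since there are finitely many such fibers — so that $f^N$ maps it into $W_R$.
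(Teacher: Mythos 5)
Your proof follows the same route as the paper: establish the uniform telescoping estimate $|\tilde{G}_{n+1}-\tilde{G}_n|<\log r/d^{n+1}$ on $W_R$ via Lemma~\ref{delta < d: main lem}, propagate to $f^{-n}(W_R)$ via the functional equation $\tilde{G}_n\circ f = d\tilde{G}_{n+1}$, and then use the sublevel-set reduction with the maximum principle on vertical slices and equation~(\ref{delta < d: eq3}) to conclude $U=\{z\in V,\ G_z\ge\epsilon/3\}\subset f^{-n}(W_R)$ for large $n$, exactly as in the proof of Proposition~\ref{delta > d: unif}. The additional care you devote in the last paragraph to uniformity over $\overline{V}$ and to degenerate fibers is a sensible elaboration of details the paper treats as routine.
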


\begin{proof}
We first show that 
$\tilde{G}_n$ converges uniformly to $G_z$ on $W_R$.
By 
Lemma {\rmfamily \ref{delta < d: main lem}},
there is a constant $r > 1$ such that, 
for any $(z,w)$ in $W_R$,
\[
\left| \tilde{G}_{n+1} - \tilde{G}_n \right| 
= \left| G_{n+1} - G_n - \dfrac{\gamma}{d^{n+1}} \log |p^n(z)| \right| 
\]
\[
= \dfrac{1}{d^{n+1}} \log \left| \dfrac{Q_z^{n+1}(w)}{(p^n(z))^{\gamma} (Q_z^n(w))^d}  \right| 
< \dfrac{\log r}{d^{n+1}}.
\]
Hence $\tilde{G}_n$ converges uniformly to $G_z$ on $W_R$.

The left part of the proof is the same as the proof of 
Proposition {\rmfamily \ref{delta > d: unif}}.
The equation $\tilde{G}_{n} \circ f = d \tilde{G}_{n+1}$
extends the uniform convergent region from $W_R$ to $f^{-n} (W_R)$.
It follows from equation $(\ref{delta < d: eq3})$
in the proof of Theorem {\rmfamily \ref{delta < d: main thm}} 
that $U = \{ z \in V, G_z \geq \epsilon /3 \} \subset f^{-n} (W_R)$
for large $n$,
which induces the required uniform convergence.
\end{proof}

This proposition holds even if we replace $\log^{+} |p^j|$ 
in the definition of $\tilde{G}_n$ by $\log |p^j|$,
since $|p^j| > 1$ on $V$ if $j$ is large enough.
Moreover,
in Proposition {\rmfamily \ref{delta < d: unif}}
we can replace $\tilde{G}_n$ by
\[
\hat{G}_n 
= G_n + \sum_{j = n}^{\infty} \dfrac{\gamma}{d^{j+1}} \log |z^{\delta^j}|
= G_n + \dfrac{\gamma}{d - \delta} \left( \dfrac{\delta}{d} \right)^n \log |z| 
\]
with a slight modification of the convergent region.
However, 
the proof is not the same as before, because
\[
\hat{G}_n \circ f - d \hat{G}_{n+1}
= \dfrac{\gamma}{d - \delta} \left( \dfrac{\delta}{d} \right)^n 
\log \left| \dfrac{p(z)}{z^{\delta}} \right|,
\]
which is not zero in general.

\begin{pro}\label{delta < d: unif2}
If $\delta < d$, then
the convergence of $\hat{G}_n$ to $G_z$ is uniform on $V \times \mathbb{C}$, 
where $\overline{V} \subset A_p \cap \{ |z| > 1 \}$.
\end{pro}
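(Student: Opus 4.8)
The plan is to follow the two-stage scheme used for Proposition~\ref{delta < d: unif} (and Proposition~\ref{delta > d: unif}), the only new feature being that $\hat{G}_n$ satisfies the functional equation only approximately. First I would prove uniform convergence of $\hat{G}_n$ to $G_z$ on $W_R=\{|z|>R,\ |w|>R|z|^{\alpha}\}$ for large $R$. Writing $\hat{G}_n=G_n+c_n\log|z|$ with $c_n=\tfrac{\gamma}{d-\delta}(\delta/d)^n$ and using $c_{n+1}-c_n=-\gamma\delta^n/d^{n+1}$, a short computation on $W_R$ (where $f(W_R)\subset W_R$ by Lemma~\ref{delta < d: main lem}) gives
\[
\hat{G}_{n+1}-\hat{G}_n
=\frac{\gamma}{d^{n+1}}\log\left|\frac{p^n(z)}{z^{\delta^n}}\right|
+\frac{1}{d^{n+1}}\log\left|\frac{Q_z^{n+1}(w)}{(p^n(z))^{\gamma}(Q_z^n(w))^d}\right|.
\]
By inequality~(\ref{delta < d: eq1}) the second term is bounded by $d^{-(n+1)}\log r$, and since $p(z)\sim z^{\delta}$ there is $r_0>1$ with $|p^n(z)/z^{\delta^n}|\le r_0^{(\delta^n-1)/(\delta-1)}$ on $\{|z|>R\}$, so the first term is bounded by $\tfrac{|\gamma|\log r_0}{\delta-1}(\delta/d)^n$; both bounds are summable because $\delta<d$, so $\sum_n(\hat{G}_{n+1}-\hat{G}_n)$ converges uniformly on $W_R$, and since $\hat{G}_n\to G_z$ pointwise the convergence is uniform there.

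Next I would propagate this to $V\times\mathbb{C}$ essentially as in the proof of Proposition~\ref{delta < d: unif}. Fix $\epsilon>0$ and set $U=\{(z,w):z\in V,\ G_z(w)\ge\epsilon/3\}$; equation~(\ref{delta < d: eq3}) together with $\overline{V}$ being compact in $A_p$ shows $U\subset f^{-N}(W_R)$ for $N$ large. For $m>N$, using $Q_z^m(w)=Q_{p^N(z)}^{m-N}(Q_z^N(w))$ one rewrites
\[
\hat{G}_m(z,w)=d^{-N}\,\hat{G}_{m-N}\bigl(f^N(z,w)\bigr)
+\frac{\gamma}{d-\delta}\left(\frac{\delta}{d}\right)^{m}\left(\log|z|-\frac{1}{\delta^N}\log|p^N(z)|\right);
\]
on $U$ the first summand tends to $d^{-N}G_{p^N(z)}(Q_z^N(w))=G_z(w)$ uniformly by the first stage (applied on $W_R\supset f^N(U)$), and the second tends to $0$ uniformly on $V$, since $\log|z|-\delta^{-N}\log|p^N(z)|=\delta^{-N}\log|p^N(z)/z^{\delta^N}|$ is bounded (on $\{|z|>R\}$ by the estimate above, on $\overline{V}\cap\{|z|\le R\}$ by compactness) while $(\delta/d)^m\to0$. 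Hence $\hat{G}_m\to G_z$ uniformly on $U$, in particular on $\{G_z=\epsilon/3\}\cap(V\times\mathbb{C})\subset U$.

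It remains to treat the complement, and this is where the hypothesis $\overline{V}\subset\{|z|>1\}$ enters: since $\gamma\ge0$ and $d>\delta$, the correction $c_n\log|z|$ is nonnegative on $\{|z|>1\}$, so $\hat{G}_n\ge G_n\ge0$ on $V\times\mathbb{C}$. For fixed $z\in V$ the function $w\mapsto\hat{G}_n(z,w)$ is subharmonic (it is $G_n(z,\cdot)$ plus a constant), and $\{w:G_z(w)<\epsilon/3\}$ is bounded because $G_z\sim\log|w|$ at infinity; by the maximum principle and the uniform estimate on its boundary, $\hat{G}_m<\epsilon/2$ there for $m$ large, whence $|\hat{G}_m-G_z|<\epsilon$ on $U^c\cap(V\times\mathbb{C})$. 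Combined with the estimate on $U$, this yields the asserted uniform convergence. I expect the main obstacle to be the second stage: verifying that the failure of exactness of $\hat{G}_n\circ f=d\hat{G}_{n+1}$ — the defect being $\tfrac{\gamma}{d-\delta}(\delta/d)^n\log|p(z)/z^{\delta}|$ — only accumulates to an error of order $(\delta/d)^m$ after the fixed number $N$ of iterations, so that the uniform convergence on $W_R$ still transports to $U$; the rest is a routine adaptation of Propositions~\ref{delta < d: unif} and~\ref{delta > d: unif}.
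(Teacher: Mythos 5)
Your proof is correct, and it takes a mildly but genuinely different route through the middle stage. Stage one on $W_R$ is identical to the paper's, and the final max-principle step on $U^c$ (including the observation that $\overline{V}\subset\{|z|>1\}$ is exactly what makes the correction $c_n\log|z|$ nonnegative, so $\hat G_n\ge G_n\ge 0$) also matches. The difference is in how you pass from $W_R$ to $U=\{z\in V,\ G_z\ge\epsilon/3\}$. The paper re-runs the telescoping estimate directly on the larger set $f^{-N}(W_R)\cap\bigl(p_0^{-N}(\{|z|>R\})\times\mathbb{C}\bigr)$, $p_0(z)=z^\delta$, using the factorization
\[
\frac{p^n(z)}{z^{\delta^n}}=\frac{p^{n-N}\bigl(p^N(z)\bigr)}{p_0^{n-N}\bigl(p^N(z)\bigr)}\cdot\left(\frac{p^N(z)}{z^{\delta^N}}\right)^{\delta^{n-N}}
\]
and constants $m_N<|p^N(z)/z^{\delta^N}|<M_N$, then lets $N\to\infty$. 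You instead iterate the inexact functional equation $\hat G_n\circ f-d\hat G_{n+1}=\tfrac{\gamma}{d-\delta}(\delta/d)^n\log|p(z)/z^\delta|$ to get the exact identity
\[
\hat G_m=d^{-N}\,\hat G_{m-N}\circ f^N+\frac{\gamma}{d-\delta}\left(\frac{\delta}{d}\right)^{m}\!\left(\log|z|-\frac{1}{\delta^N}\log|p^N(z)|\right),
\]
and observe that with $N$ fixed the correction is $O((\delta/d)^m)$ uniformly on $U$, while the leading term converges by stage one applied on $W_R\supset f^N(U)$. Both routes rest on the same inputs ($U\subset f^{-N}(W_R)$ from (\ref{delta < d: eq3}), boundedness of $\log|p^N(z)/z^{\delta^N}|$, and the max principle), but your version makes the role of the approximation defect explicit and avoids introducing the auxiliary set $p_0^{-N}(\{|z|>R\})$. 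One small imprecision worth tightening: boundedness of $\log|z|-\delta^{-N}\log|p^N(z)|$ needs $p^N\neq 0$, which does not follow from compactness of $\overline V$ alone; the clean justification is that on $U\subset f^{-N}(W_R)$ one has $|p^N(z)|>R>0$, so the quantity is bounded on the set where it is actually used.
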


\begin{proof}
We first show that 
$\hat{G}_n$ converges uniformly to $G_z$ on $W_R$.
Since $p(z) \sim z^{\delta}$,
there exists constants $0 < r_1 < 1 < r_2$ such that
$r_1 |z^{\delta}| < |p(z)| < r_2 |z^{\delta}|$ and so 
$(r_1 |z|)^{\delta^n} < |p^n(z)| < (r_2 |z|)^{\delta^n}$ if $|z| > R$.
Hence if $|z| > R$, then
\[
\left| \log \left| \dfrac{p^n(z)}{z^{\delta^n}} \right| \right|
< \log r_0^{\delta^n} = \delta^n \log r_0,
\]
where $\log r_0 = \max \{ - \log r_1, \log r_2 \}$.
By Lemma {\rmfamily \ref{delta < d: main lem}},
there is a constant $r > 1$ such that,
for any $(z,w)$ in $W_R$,
\[
\left| \log \left| \dfrac{Q_z^{n+1}(w)}{(p^n(z))^{\gamma} (Q_z^n(w))^d} \right| \right|
< \log r.
\]
With these inequalities, the equation
\[
\left| \dfrac{Q_z^{n+1}(w)}{(z^{\delta^n})^{\gamma} (Q_z^n(w))^d} \right| 
= \left| \dfrac{p^n(z)}{z^{\delta^n}} \right|^{\gamma} 
\cdot \left| \dfrac{Q_z^{n+1}(w)}{(p^n(z))^{\gamma} (Q_z^n(w))^d} \right| 
\]
implies that, for any $(z,w)$ in $W_R$,
\begin{eqnarray}\label{delta < d: eq8} 
\begin{split}
\left| \hat{G}_{n+1} - \hat{G}_n \right| 
&= \left| G_{n+1} - G_n - \dfrac{\gamma}{d^{n+1}} \log |z^{\delta^n}| \right| \\
&= \left| \dfrac{1}{d^{n+1}} \log \left| 
  \dfrac{Q_z^{n+1}(w)}{(z^{\delta^n})^{\gamma} (Q_z^n(w))^d} \right| \right| \\
&< \dfrac{\gamma}{d} \left( \dfrac{\delta}{d} \right)^n \log r_0
+ \dfrac{1}{d^{n+1}} \log r.
\end{split}
\end{eqnarray}
Hence $\hat{G}_n$ converges uniformly to $G_z$ on $W_R$.

Next, we show the uniform convergence 
on $f^{-N} (W_R) \cap ( p_0^{-N} (U) \times \mathbb{C} )$
for any $N$, 
where $p_0 (z) = z^{\delta}$ and $U = \{ |z| > R \}$.
By definition,
there exist constants $0 < m_N < 1 < M_N$ such that
$m_N |z^{\delta^N}| < |p^N (z)| < M_N |z^{\delta^N}|$ 
on $U_N = p^{-N} (U) \cap p_0^{-N} (U)$. 
Since 
\[
\left| \dfrac{p^n(z)}{z^{\delta^n}} \right| 
= \left| \dfrac{p^{n-N}(p^N(z))}{p_0^{n-N}(p^N(z))}
\cdot \dfrac{p_0^{n-N}(p^N(z))}{p_0^{n-N}(p_0^N(z))} \right|,
\]
it follows that,
for any $z$ in $U_N$ and for any $n \geq N$,
\[
(r_1 m_N)^{\delta^{n-N}} 
< \left| \dfrac{p^n(z)}{z^{\delta^n}} \right| 
< (r_2 M_N)^{\delta^{n-N}}. 
\]
Applying a similar argument as above for $n \geq N$,
we get the uniform convergence 
on the required region,
which converges to $A_f \cap ( \{ |z| > 1 \} \times \mathbb{C} )$.
The left part is the same as the proof of
Proposition {\rmfamily \ref{delta > d: unif}}. 
\end{proof}

\begin{rem}\label{delta < d: asy of G on W_R} 
It follows from inequality $(\ref{delta < d: eq8})$ that
\[
\left| G_z(w) - \log |z^{\gamma /(d - \delta)} w| \right| < C_R
\text{ on } W_R,
\]
where $C_R$ converges to $0$ as $R$ tends to $\infty$.
This inequality is better than
inequality $(\ref{delta < d: eq2})$ 
in the proof of Theorem {\rmfamily \ref{delta < d: main thm}}
and resembles inequality $(\ref{delta > d: eq2})$ 
in the proof of Theorem {\rmfamily \ref{delta > d: main thm}}.
\end{rem}

Moreover, if $\alpha = \gamma /(\delta - d) < 0$, then
we can show that the convergence to $G_z^{\alpha}$ is uniform.
Let $\bar{G}_n = d^{-n} \log^{+} |z_n^{- \alpha} w_n|$,
where $(z_n,w_n) = f^n(z,w)$.  
If $\alpha = \gamma /(\delta - d) < 0$, then
$\hat{G}_n = d^{-n} \log |z^{- \alpha \delta^n} w_n|$ and so
\[
\bar{G}_n
= \hat{G}_n + \frac{- \alpha}{d^n} \log \left| \dfrac{p^n(z)}{z^{\delta^n}} \right|
\text{ on } W_R. 
\]
Hence we get the following convergence theorem.

\begin{cor}
If $\delta < d$ and $\alpha = \gamma /(\delta - d) < 0$, then
the convergence of $\bar{G}_n$ to $G_z^{\alpha}$ is uniform on $V \times \mathbb{C}$, 
where $\overline{V} \subset A_p$.
\end{cor}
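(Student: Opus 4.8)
The limit under consideration is $G_z^{\alpha}$, and by Corollary \ref{delta < d; main cor} we already know that $G_z^{\alpha} = G_z$ on $A_p \times \mathbb{C}$; in particular it is continuous and plurisubharmonic there, and $G_z^{\alpha} \sim \log|w|$ as $w \to \infty$. The plan is to argue in two steps, paralleling the proofs of Theorems \ref{delta > d: main thm} and \ref{delta < d: main thm} and of Proposition \ref{delta > d: unif}: first show that $\bar{G}_n \to G_z^{\alpha}$ uniformly on $W_R$, and then promote this to uniform convergence on all of $V \times \mathbb{C}$ with $\overline{V} \subset A_p$ via the functional equation together with a maximum-principle argument on vertical lines.

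For the first step I would start from the identity $\bar{G}_n = \hat{G}_n + \frac{-\alpha}{d^n}\log\bigl|p^n(z)/z^{\delta^n}\bigr|$ on $W_R$ displayed just above. Since $p(z) \sim z^{\delta}$, there are constants $0 < r_1 < 1 < r_2$ with $(r_1|z|)^{\delta^n} < |p^n(z)| < (r_2|z|)^{\delta^n}$ for $|z| > R$, whence $\bigl|\log|p^n(z)/z^{\delta^n}|\bigr| = O(\delta^n)$ uniformly on $\{|z| > R\} \supset W_R$; as $\delta < d$, the factor $d^{-n}$ makes the correction term $O((\delta/d)^n)$, which tends to $0$ uniformly on $W_R$. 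Since $\hat{G}_n \to G_z$ uniformly on $W_R$ by inequality (\ref{delta < d: eq8}) in the proof of Proposition \ref{delta < d: unif2}, and $G_z = G_z^{\alpha}$ on $A_p \times \mathbb{C}$, this gives the claimed uniform convergence on $W_R$. Equivalently, one may bound $\bar{G}_{n+1} - \bar{G}_n$ directly: Lemma \ref{delta < d: main lem} and $\alpha = \gamma/(\delta - d)$ imply $z_{n+1}^{-\alpha} w_{n+1}/(z_n^{-\alpha} w_n)^d \to 1$ on $W_R$, hence $|\bar{G}_{n+1} - \bar{G}_n| < d^{-(n+1)}\log r$, exactly as in the proof of Theorem \ref{delta > d: main thm}.

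For the second step, the identity $\bar{G}_n \circ f = d\,\bar{G}_{n+1}$ extends the uniform convergence from $W_R$ to each set $f^{-N}(W_R)$. Fix $\epsilon > 0$ and $\overline{V} \subset A_p$, and set $U = \{(z,w) : z \in V,\ G_z^{\alpha}(w) \ge \epsilon/3\}$. Because $G_z^{\alpha} = G_z$ satisfies the near-boundary estimate (\ref{delta < d: eq3}), one has $G_z^{\alpha} < \epsilon/3$ on $f^{-n}(E)$ for $n$ large, so $U \subset f^{-n}(W_R)$ for $n$ large; hence $|\bar{G}_n - G_z^{\alpha}| < \epsilon/3$ on $\overline{U}$, so $\bar{G}_n < 2\epsilon/3$ on $\partial U$, once $n$ is large. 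For each fixed $z \in V$, the function $w \mapsto \bar{G}_n(z,w)$ is subharmonic, and $\{w : G_z^{\alpha}(w) < \epsilon/3\}$ is bounded because $G_z^{\alpha} \sim \log|w|$; the maximum principle therefore gives $\bar{G}_n < 2\epsilon/3$ on the complement of $U$ as well, so $|\bar{G}_n - G_z^{\alpha}| < \epsilon$ on all of $V \times \mathbb{C}$. As in Proposition \ref{delta > d: unif}, this argument imposes no lower bound on $|z|$, which is why we recover the full range $\overline{V} \subset A_p$ rather than $\overline{V} \subset A_p \cap \{|z| > 1\}$ as in Proposition \ref{delta < d: unif2}; the point is that $\bar{G}_n$, unlike $\hat{G}_n$, is finite everywhere on $A_p \times \mathbb{C}$.

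The main obstacle, and the reason Proposition \ref{delta < d: unif2} cannot be quoted directly, is that the identity relating $\bar{G}_n$ and $\hat{G}_n$ holds only on $W_R$ --- off $W_R$ the two $\log^{+}$'s no longer split --- so uniformity on $W_R$ has to be propagated to $V \times \mathbb{C}$ \emph{by hand}. That propagation is the crux, and it leans on two ingredients already available: the asymptotics of $G_z^{\alpha}$ near $\partial A_f$, which locates $U$ inside $f^{-n}(W_R)$, and the growth $G_z^{\alpha} \sim \log|w|$, which makes the sublevel sets in $w$ bounded so that the maximum principle applies. The hypothesis $\delta < d$ enters in exactly one quantitative place, namely in forcing the correction term $O((\delta/d)^n)$ to vanish.
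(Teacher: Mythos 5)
Your proof is correct and follows essentially the same path as the paper: pass from $\hat{G}_n$ to $\bar{G}_n$ on $W_R$ via the displayed identity, observe that the correction term is $O((\delta/d)^n)$ uniformly on $W_R$ so the uniform convergence transfers, then propagate via $\bar{G}_n \circ f = d\,\bar{G}_{n+1}$ and conclude as in Proposition~\ref{delta > d: unif}. The paper's version is terser but relies on precisely these ingredients, so the two proofs coincide in substance.
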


\begin{proof}
Since $\hat{G}_n$ converges uniformly to $G_z^{\alpha}$ on $W_R$,
so does $\bar{G}_n$.
The equation $\bar{G}_n \circ f = d \bar{G}_{n+1}$
extends the uniform convergent region from $W_R$ to $f^{-n} (W_R)$.  
The left part is the same as the proof of
Proposition {\rmfamily \ref{delta > d: unif}}. 
\end{proof}

This uniform convergence induces 
the following asymptotics of $G_z^{\alpha}$ near infinity.
The proofs are similar to those of 
Lemma {\rmfamily \ref{delta > d: asy lem}}
and Proposition {\rmfamily \ref{delta > d: asy pro}}.

\begin{lem}
If $\delta < d$ and $\alpha = \gamma /(\delta - d)$,
then $G_z^{\alpha}(cz^{\alpha}) = G_h (c) + o(1)$
as $z \to \infty$ for fixed $c$.
\end{lem}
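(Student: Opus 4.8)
The plan is to imitate the proof of Lemma~\ref{delta > d: asy lem} step for step, feeding in the $\delta<d$ ingredients; throughout $\alpha=\gamma/(\delta-d)<0$ since $\gamma\neq 0$ and $\delta<d$. Fix $\epsilon>0$; it suffices to produce $R>0$ with $|G_z^{\alpha}(cz^{\alpha})-G_h(c)|<\epsilon$ on $\{|z|>R\}$. I would first dispose of the case in which $\alpha$ is an integer. The key input is the description of the top weighted homogeneous part of $Q_z^n$ recalled in Section~5.1: because $\alpha=\gamma/(\delta-d)$, the weight of $Q_z^n(w)$ equals $\alpha\delta^n$ and $z^{\alpha\delta^n}h^n(z^{-\alpha}w)$ is its weighted homogeneous part of that weight. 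Specializing $w=cz^{\alpha}$ gives $Q_z^n(cz^{\alpha})=h^n(c)z^{\alpha\delta^n}+o(z^{\alpha\delta^n})$ as $z\to\infty$ for fixed $c$, and since $p^n(z)\sim z^{\delta^n}$ this yields $z_n^{-\alpha}w_n=p^n(z)^{-\alpha}Q_z^n(cz^{\alpha})=h^n(c)+o(1)$, where $(z_n,w_n)=f^n(z,cz^{\alpha})$. Consequently $\bar{G}_n(z,cz^{\alpha})=d^{-n}\log^{+}|z_n^{-\alpha}w_n|=G_h^n(c)+o(1)$ as $z\to\infty$, where $G_h^n=d^{-n}\log^{+}|h^n|$.

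Next I would fix a large integer $N$ and combine three estimates. By the corollary preceding this lemma, the convergence of $\bar{G}_n$ to $G_z^{\alpha}$ is uniform on $V\times\mathbb{C}$ whenever $\overline{V}\subset A_p$; applying this to $V=\{|z|>R_0\}$ for $R_0$ large produces an integer $N_1$ with $|G_z^{\alpha}(w)-\bar{G}_n(z,w)|<\epsilon/3$ on $\{|z|>R_0\}$ for all $n\ge N_1$. Since $G_h^n\to G_h$ uniformly on $\mathbb{C}$, there is $N_2$ with $|G_h(c)-G_h^n(c)|<\epsilon/3$ for $n\ge N_2$. Setting $N=\max\{N_1,N_2\}$, the asymptotic of the previous paragraph lets me enlarge $R_0$ to some $R$ with $|\bar{G}_N(z,cz^{\alpha})-G_h^N(c)|<\epsilon/3$ on $\{|z|>R\}$. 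A triangle inequality over these three estimates gives $|G_z^{\alpha}(cz^{\alpha})-G_h(c)|<\epsilon$ on $\{|z|>R\}$, which settles the integer case.

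Finally, for non-integer $\alpha$ the only new wrinkle is that $z^{\alpha}$ is multivalued; I would observe that $|Q_z^n(cz^{\alpha})|=|h^n(c)|\,|z|^{\alpha\delta^n}+o(|z|^{\alpha\delta^n})$ still holds, that $|z_n^{-\alpha}w_n|$ and $G_h(c)=G_h(z^{-\alpha}w)$ are well-defined functions of $z$ and $w$ thanks to the symmetries of $h$ and $G_h$ attached to the denominator of $\alpha$ (as in Section~4.3), and then rerun the argument above on $|z_n^{-\alpha}w_n|$. The main obstacle I anticipate is not any individual estimate but the honest interchange of the two limits $n\to\infty$ and $z\to\infty$: the expansion $Q_z^n(cz^{\alpha})\sim h^n(c)z^{\alpha\delta^n}$ is only an asymptotic in $z$ for each fixed $n$, so the proof genuinely needs the uniformity in $z$ of $\bar{G}_n\to G_z^{\alpha}$ furnished by the preceding corollary to bridge to the limit in $n$; the remaining steps are the routine bookkeeping already carried out in Lemma~\ref{delta > d: asy lem}.
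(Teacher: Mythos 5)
Your proof is correct and essentially matches the paper's: the paper's own proof runs the same three-step argument with $\hat{G}_n$ in place of $\bar{G}_n$, and then remarks immediately afterwards that "In this proof we can replace $\hat{G}_n$ by $\bar{G}_n$," which is precisely the variant you carried out in detail (citing the preceding corollary on uniform convergence of $\bar{G}_n$ to $G_z^{\alpha}$).
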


\begin{proof}
Because
$Q_z^n (cz^{\alpha}) = z^{\alpha \delta^n} h^n(c) \{ 1 + o(1) \}$
as $z \to \infty$, 
\[
\hat{G}_n (z, cz^{\alpha}) 
= d^{-n} \log^{+} |h^n(c)| + o(1).
\]
Since $\hat{G}_n$ and $d^{-n} \log^{+} |h^n|$ converge uniformly 
to $G_z^{\alpha}$ and $G_h$ on suitable sets respectively, 
we get the required asymptotics.
\end{proof}

In this proof we can replace $\hat{G}_n$ by $\bar{G}_n$.

\begin{pro}\label{delta < d: asy}
If $\delta < d$ and $\alpha = \gamma /(\delta - d)$, 
then $G_z^{\alpha}(w) = G_h (z^{- \alpha} w) + o(1)$
as $z \to \infty$.
\end{pro}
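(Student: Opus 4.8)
The plan is to mirror the proof of Proposition~\ref{delta > d: asy pro}. Fix $\epsilon > 0$; it is enough to find $R > 0$ such that
\[
| G_z^{\alpha}(w) - G_h(z^{-\alpha} w) | < \epsilon
\]
on $\{ |z| > R \}$. For $R$ large this set lies in $A_p$ (since $K_p$ is bounded), so there $G_z^{\alpha}$ is defined and, by Corollary~\ref{delta < d; main cor}, equals $G_z$; recall also $\gamma/(d-\delta) = -\alpha$. I would split $\{ |z| > R \}$ into the outer region $W_R = \{ |z| > R, |w| > R|z|^{\alpha} \}$ and an inner region $V_R = \{ |z| > R, |w| < 2R_1|z|^{\alpha} \}$, where $R_1$ is fixed in the next step; for $R_2 \ge R_1$ the regions $W_{R_1} \cap \{|z| > R_2\}$ and $V_{R_2}$ cover $\{ |z| > R_2 \}$, so it suffices to establish the bound on each.

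On $W_R$ I would invoke Remark~\ref{delta < d: asy of G on W_R}, which gives $| G_z(w) - \log|z^{-\alpha} w| | < C_R$ with $C_R \to 0$ as $R \to \infty$. Combining this with the elementary fact that the Green function $G_h$ of the monic degree-$d$ polynomial $h(c) = h(1,c)$ satisfies $G_h(c) = \log|c| + o(1)$ as $c \to \infty$, and with $|z^{-\alpha}w| > R$ on $W_R$, one obtains $| G_z^{\alpha}(w) - G_h(z^{-\alpha}w) | < \epsilon$ on $W_{R_1}$ for $R_1$ large enough.

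On $V_R$ the (multi-valued) quantity $c = z^{-\alpha}w$ has relatively compact range $\{ |c| < 2R_1 \}$, so I would run the argument of the preceding lemma on the asymptotics of $G_z^{\alpha}(cz^{\alpha})$, which itself parallels Lemma~\ref{delta > d: asy lem}: from $Q_z^n(cz^{\alpha}) = z^{\alpha \delta^n} h^n(c)\{1 + o(1)\}$ and $p(z) \sim z^{\delta}$ one gets $\bar{G}_n(z,w) = d^{-n}\log^{+}|h^n(z^{-\alpha}w)| + o(1)$ as $z \to \infty$, uniformly for $|c| < 2R_1$, where $\bar{G}_n = d^{-n}\log^{+}|z_n^{-\alpha}w_n|$ and $(z_n,w_n) = f^n(z,w)$. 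Since $\bar{G}_n \to G_z^{\alpha}$ uniformly on $\overline{V} \times \mathbb{C}$ whenever $\overline{V} \subset A_p$ (the corollary on uniform convergence of $\bar{G}_n$; cf.\ Proposition~\ref{delta < d: unif2}) and $d^{-n}\log^{+}|h^n| \to G_h$ uniformly on $\mathbb{C}$, choosing first $n$ large and then $R$ large yields $| G_z^{\alpha}(w) - G_h(z^{-\alpha}w) | < \epsilon$ on $V_{R_2}$ for some $R_2 \ge R_1$. Taking $R = R_2$ finishes the argument.

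The bulk of this is a near-verbatim transcription of the $\delta > d$ case, so the computations are routine. The one point needing care, exactly as there, is that for non-integer $\alpha$ the branch of $z^{-\alpha}$ is ambiguous, so one must use that $|c|$, the polynomial $h(c) = h(1,c)$ up to its rotational symmetry by $r$-th roots of unity (with $r$ the denominator of $\alpha$), and the value $G_h(z^{-\alpha}w)$ are all nonetheless well defined; this is precisely what makes the two local estimates compatible. Together with the sign-sensitive bookkeeping — $\alpha < 0$ here — needed to verify that $W_{R_1}$ and $V_{R_2}$ really do cover $\{|z| > R_2\}$, this is the main, and only mild, obstacle.
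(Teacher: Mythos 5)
Your proposal is correct and takes essentially the same route the paper intends: the paper explicitly delegates this proof to the template of Proposition~\ref{delta > d: asy pro}, and you have faithfully carried it over, correctly substituting the outer estimate of Remark~\ref{delta < d: asy of G on W_R} on $W_R$ and the uniform convergence of $\bar{G}_n$ (via Lemma~5.8) on the inner region $V_R$. Your extra care with the covering of $\{|z|>R_2\}$ by $W_{R_1}\cap\{|z|>R_2\}$ and $V_{R_2}$, and with the sign of $\alpha<0$, is a small tidying-up of a detail the paper leaves implicit rather than a departure from its argument.
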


\section{$\delta = d$}

In this section we deal with the last case $\delta = d$.
The results for the Green functions of $f$
and the dynamics of $\tilde{f}$ are different
depending on whether $f$ is nondegenerate. 
However, it is common that $p_{\infty}$ is attracting in some sense
and that $f \sim (z^{d}, z^{\gamma} w^d)$
on a region in the attracting basin of $p_{\infty}$.

In Section 6.1,
we give the definition of $\alpha$ and an example of monomial maps.
In Section 6.2,
we prove the existence of three types of Green functions
under the assumption $\gamma \neq 0$.
First,
we show that $G_z$ is defined on $A_f$
and $G_f^{\alpha}$ is defined on $\mathbb{C}^2$
if we admit plus infinity.
Next,
we show the existence of 
$\lim_{n \to \infty} (n \gamma d^{n-1})^{-1} \log^{+} |Q_z^n(w)|$
on $\mathbb{C}^2$,
which is continuous on $\mathbb{C}^2 - \partial A_f \cap \partial B_f$
and 
plurisubharmonic on $\mathbb{C}^2$. 
Finally, 
we show that the limit $G$ is defined and plurisubharmonic 
on $A_p \times \mathbb{C}$ if we admit minus infinity.
It is continuous and pluriharmonic on $A_f$. 

\subsection{Weights}

We generalize the definition of $\alpha$ as
\[
\inf \left\{ l \in \mathbb{Q} \ \Big| 
\begin{array}{lcr}
\gamma + ld \geq n_j + l m_j \text{ for any integers $n_j$ and $m_j$ s.t.} \\ 
z^{n_j} w^{m_j} \text{ is a term in } q \text{ with nonzero coefficient}
\end{array}
\right\}.
\] 
This definition is similar to the previous case $\delta < d$,
since the inequality $\gamma + ld \geq l \delta$ is trivial if $\delta = d$.
If $q(z,w) \neq b(z) w^d$, then
we can replace the infimum in the definition of $\alpha $ by the minimum,
which is equal to
\[
\max \left\{ \dfrac{n_j - \gamma}{d - m_j} \ \Big|
\begin{array}{lcr}
z^{n_j} w^{m_j} \text{ is a term in } q \text{ with} \\
\text{nonzero coefficient s.t. } m_j < d
\end{array}
\right\}.
\] 
For this case,
$- \gamma \leq \alpha \leq \deg_z q - \gamma$
and $\alpha < \deg q - \gamma$.
If $q(z,w) = b(z) w^d$, then $\alpha = - \infty$.
Thus $\alpha = - \infty$ even if $q = w^d$,
which differs with the definition we used for the nondegenerate case.
See Section 7 for a claim regarding the degree growth of $f$.

Let the weight of $z^n w^m$ be $n + \alpha m$.
Then the weight of $q$ is $\gamma + \alpha d$ and 
the weight of $Q_z^n(w)$ is $n \gamma d^{n - 1} + \alpha d^n$.
Let $h$ be the weighted homogeneous part of $q$
of highest weight $\gamma + \alpha d$,
which contains $z^{\delta} w^d$.
If $\gamma = 0$, then
$z^{\alpha {\delta}^n} h^n(z^{- \alpha} w)$ is the weighted homogeneous part of 
$Q_z^n (w)$ of weight $\alpha d^n$.
If $\gamma \neq 0$, then
$z^{n \gamma d^{n - 1}} (z^{- \gamma} h(z,w))^{d^{n-1}}$ is the weighted homogeneous part of 
$Q_z^n (w)$ of weight $n \gamma d^{n - 1} + \alpha d^n$.

The dynamics of $\tilde{f}$ on $\mathbb{P} (r,s,1)$ is 
the same as in the case $\delta < d$
if $\gamma \neq 0$ for any positive integers $r$ and $s$.
Because it contracts $L_{\infty} - I_{\tilde{f}}$ to the indeterminacy point $p_{\infty}$,
the point $p_{\infty}$ attracts most nearby points in $A_p \times \mathbb{C}$.

In this subsection we give only one example,
i.e. monomial maps.
If $\gamma = 0$,
then there are many polynomial skew products
that are semiconjugate to polynomial products;
such maps are studied in \cite[Theorem 3.7 and Proposition 3.9]{u-sym}, 
\cite[Examples 5.2 and 5.3]{u-weight} and \cite{u-semiconj}.
However,
we have no such maps if $\gamma \neq 0$.

\begin{ex}[monomial maps]
Let $f = (z^{d}, z^{\gamma} w^d)$ and $\gamma \neq 0$.
Then $\alpha = - \infty$
and $f^n = (z^{d^n}, z^{\gamma_n} w^{d^n})$,
where $\gamma_n = n \gamma d^{n-1} + d^n$.
Hence
\[
G_f = G_z =
\begin{cases}
\infty & \text{ on } \{ |z| > 1, w \neq 0 \} \\
\log^{+} |w| & \text{ on } \{ |z| = 1 \} \\
0 & \text{ on } \{ |z| < 1 \} \cup \{ w = 0 \}.
\end{cases}
\]
Moreover,
\[
\lim_{n \to \infty} \frac{1}{\deg (f^n)} \log^{+} |f^n(z,w)| 
= \lim_{n \to \infty} \frac{1}{n \gamma d^{n-1} + d^n} \log^{+} |Q_z^n(w)| 
\]
\[
=
\begin{cases}
\log^{+} |z| & \text{ on } \{ w \neq 0 \} \\
0 & \text{ on } \{ w = 0 \},
\end{cases}
\]
which is plurisubharmonic on $\mathbb{C}^2$
but not continuous on $\{ |z| \geq 1, w = 0 \}$,
and $G(z,w) = \log |w|$ on $\{ z \neq 0 \}$,
which is continuous and pluriharmonic on $\{ zw \neq 0 \}$
and plurisubharmonic on $\{ z \neq 0 \}$.
\end{ex}

\subsection{Existence of Green functions}
We defined $W_R$ 
as $\{ |z| > R, |w| > R|z|^{\alpha} \}$ if $\gamma \neq 0$. 
If $\alpha = - \infty$, 
then $W_R = \{ |z| > R, |w| > 0 \}$ 
since we may assume that $R > 1$.
As same as the case $\delta < d$, we have the following lemma.

\begin{lem}\label{delta = d: main lem}
If $\delta = d$, 
then $q(z,w) \sim z^{\gamma} w^d$ on $W_R$ for large $R > 0$,
and $f$ preserves $W_R$; that is, $f(W_R) \subset W_R$.
\end{lem}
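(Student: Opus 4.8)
The plan is to mimic the proof of Lemma~\ref{delta < d: main lem} almost verbatim, adapting only the places where the hypothesis $\delta < d$ was used. Write $|c| = |z^{-\alpha}w|$ (understood as a well-defined function of $|z|$ and $|w|$ even when $\alpha$ is not an integer, or, in the case $\alpha = -\infty$, simply ignore the substitution and argue directly). For any term $z^{n_j}w^{m_j}$ of $q$ with nonzero coefficient other than $z^{\gamma}w^d$, form the ratio
\[
\left| \dfrac{z^{n_j}w^{m_j}}{z^{\gamma}w^d} \right|
= \left| \dfrac{c^{m_j}z^{n_j+\alpha m_j}}{c^d z^{\gamma+\alpha d}} \right|,
\]
and observe that this tends to $0$ as $z$ and $c$ tend to infinity, because by the (new) definition of $\alpha$ at least one of the strict inequalities $d > m_j$ or $\gamma + \alpha d > n_j + \alpha m_j$ holds; in the degenerate reading $\alpha = -\infty$, the second inequality is replaced by $m_j < d$, which again forces the ratio to $0$. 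Summing the finitely many such terms gives $q(z,w) \sim z^{\gamma}w^d$ on $W_R$, equivalently the two-sided bound
\[
r_1 < \left| \dfrac{q(z,w)}{z^{\gamma}w^d} \right| < r_2
\]
on $W_R$ for suitable constants $0 < r_1 < 1 < r_2$ and $R$ large.

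For the invariance $f(W_R)\subset W_R$, first note $|p(z)| > R$ for $|z| > R$ with $R$ large, since $p(z)\sim z^{\delta}$. It remains to check $|q(z,w)| > R|p(z)|^{\alpha}$ on $W_R$. Factor
\[
\left| \dfrac{q(z,w)}{p(z)^{\alpha}} \right|
= \left| \dfrac{z^{\delta}}{p(z)} \right|^{\alpha}
\cdot \left| \dfrac{z^{\gamma+\alpha d}}{z^{\alpha\delta}} \right|
\cdot \left| \dfrac{q(z,w)}{z^{\gamma+\alpha d}} \right|,
\]
exactly as in Lemma~\ref{delta < d: main lem}. Here is the one genuine point of departure: in that lemma the factor $|z^{\gamma+\alpha d}/z^{\alpha\delta}|$ was bounded below using $\gamma + \alpha d \geq \alpha\delta$, which in the present case $\delta = d$ is the identity $\gamma + \alpha d = \alpha\delta + \gamma$, so this factor is exactly $|z|^{\gamma}$ and tends to infinity along $W_R$ when $\gamma > 0$ and to $0$ when $\gamma < 0$. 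Thus when $\gamma > 0$ the inequality $|q/p^{\alpha}| > R$ is immediate for large $|z|$; when $\gamma < 0$ one must instead absorb the decaying factor $|z|^{\gamma}$ into the growing factor $|c|^d \geq |c|^2$ coming from the last ratio, using that $|c| > R|z|^{?}$ on $W_R$ — more carefully, one combines $|c| > R$ together with the defining inequality of $W_R$ to see that $|z|^{\gamma}|c|^d$ is bounded below, since the weight inequalities underlying the definition of $\alpha$ were chosen precisely so that $z^{\gamma}w^d$ dominates. The case $\alpha = -\infty$ (so $q = b(z)w^d$ with $b$ monic of degree $\gamma$) is even more direct: $q(z,w) = z^{\gamma}w^d\{1+o(1)\}$ and $|q| > R$ on $W_R = \{|z| > R, |w| > 0\}$ because $d \geq 2$ forces $|w|^d$ to grow once $|w|$ exceeds any fixed threshold and $|z|^{\gamma}$ is bounded below by $R^{\gamma}$ if $\gamma \geq 0$, while if $\gamma < 0$ one shrinks $W_R$ to $\{|z| > R,\ |w| > R\}$ (permissible since the later sections only use $W_R$ up to enlarging $R$).

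The main obstacle, then, is purely bookkeeping in the sign of $\gamma$: unlike the $\delta < d$ case, where the auxiliary exponent $\gamma + \alpha d - \alpha\delta = \gamma + \alpha(d-\delta)$ had a definite sign from $\alpha \geq \gamma/(\delta-d)$ rescaled, here $\delta = d$ kills the $\alpha$-dependence and leaves the bare $\gamma$, which can be negative. I expect the write-up to handle $\gamma > 0$ in one line parallel to Lemma~\ref{delta < d: main lem}, and to spend the remaining effort checking that when $\gamma < 0$ the defining inequalities of $\alpha$ (namely $\gamma + ld \geq n_j + lm_j$) still guarantee that on $W_R$ the monomial $z^{\gamma}w^d$ dominates $q$ strongly enough that $|q(z,w)|$ grows, and in particular exceeds $R|p(z)|^{\alpha}$. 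Since $d \geq 2$, one iterate of the $w$-dynamics squares $|c|$, which dominates any fixed power of $|z|$ along $W_R$ after enlarging $R$, so the inclusion $f(W_R)\subset W_R$ follows. This is the same mechanism invoked at the end of the proof of Lemma~\ref{delta > d: main lem}, where "$d \geq 2$" was the operative phrase, and I would cite it in the same spirit here.
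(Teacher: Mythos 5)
Your approach matches the paper's: the paper simply remarks that when $q \neq b(z)w^d$ (so $\alpha > -\infty$) the proof is identical to that of Lemma~\ref{delta < d: main lem}, and when $q = b(z)w^d$ the claim is immediate, which is exactly the two-case structure you lay out. Where your write-up goes astray is in the extended worry about the sign of $\gamma$: recall that $\gamma = \deg b$ is the degree of the polynomial coefficient $b(z)$ of $w^d$, so $\gamma$ is always a nonnegative integer (and in fact $\gamma \geq 1$ here, since the set $W_R = \{|z| > R, |w| > R|z|^{\alpha}\}$ appearing in the statement is only defined for $\gamma \neq 0$). You seem to have conflated $\gamma$ with $\alpha$; only $\alpha$ can be negative, while $\gamma$ never is. Consequently the "genuine point of departure" you identify is not there: in the factorization
\[
\left| \dfrac{q(z,w)}{p(z)^{\alpha}} \right|
\sim \left| z^{\gamma+\alpha d - \alpha\delta} \right| \cdot |c|^d
= |z|^{\gamma}\,|c|^d
\]
the factor $|z|^{\gamma}$ is bounded below by $R^{\gamma}\geq 1$ on $W_R$, so together with $|c|^d > R^d$ and $d\geq 2$ the inclusion $f(W_R)\subset W_R$ follows exactly as in Lemma~\ref{delta < d: main lem}, with no extra case to analyze. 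Your proposed modification of shrinking $W_R$ for $\gamma < 0$, and the sentences absorbing $|z|^{\gamma}$ into $|c|^d$, are therefore unnecessary and should be cut; the rest of the argument, including the asymptotics $q\sim z^{\gamma}w^d$ and the direct treatment of $\alpha = -\infty$, is correct and agrees with the paper.
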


\begin{proof}
If $q(z,w) \neq b(z)w^d$, then $\alpha > - \infty$
and the proof is the same as the case $\delta < d$,
the proof of Lemma {\rmfamily \ref{delta < d: main lem}}.
If $q(z,w) = b(z)w^d$, 
then $\alpha = - \infty$ and this claim is trivial.
Moreover, 
$q(z,w) \sim z^{\gamma} w^d$ on $\{ |z| > R \} \times \mathbb{C}$.
\end{proof}

From now on, we deal with only the case $\gamma \neq 0$.
The lemma above implies the following two theorems.

\begin{theorem}\label{delta = d: main thm1}
Let $\delta = d$ and $\gamma \neq 0$.
If $\alpha > 0$, then
$G_z = \infty$ on $A_f$
and $\tilde{G}_z \leq \alpha G_p$ on $B_f$.
\end{theorem}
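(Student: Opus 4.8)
The plan is to run everything off Lemma~\ref{delta = d: main lem}. Applied along an orbit in $W_R$, it gives constants $0<r_1<1<r_2$ with
\[
r_1\,|p^{n}(z)|^{\gamma}\,|Q_z^{n}(w)|^{d}<|Q_z^{n+1}(w)|<r_2\,|p^{n}(z)|^{\gamma}\,|Q_z^{n}(w)|^{d}
\]
for every $n\geq 0$ and every $(z,w)\in W_R$, since $f(W_R)\subset W_R$. I will combine this with the standard fact that $d^{-n}\log|p^{n}(z)|\to G_p(z)$ for $z\in A_p$, where $G_p>0$ on $A_p$ (note $W_R\subset A_p\times\mathbb{C}$, and the base point of any point of $A_f$ lies in $A_p$), and with the remark that $\gamma=\deg b$ is a genuine degree, so the hypothesis $\gamma\neq 0$ means $\gamma\geq 1$. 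Both halves of the statement will first be proved on $W_R$ and then transported to $A_f$, respectively $B_f$, using the cocycle identity $Q_z^{n}=Q_{p^{m}(z)}^{\,n-m}\circ Q_z^{m}$.

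For $G_z=\infty$ on $A_f$: for $(z,w)\in W_R$ put $a_n=d^{-n}\log|Q_z^{n}(w)|$. Taking logarithms in the lower estimate above and dividing by $d^{\,n+1}$ gives
\[
a_{n+1}-a_{n}\;\geq\;\frac{\log r_1}{d^{\,n+1}}+\frac{\gamma}{d}\cdot\frac{\log|p^{n}(z)|}{d^{\,n}},
\]
and the right-hand side tends to $\gamma G_p(z)/d>0$. Hence $a_{n+1}-a_n$ is eventually bounded below by a positive constant, so $a_n\to+\infty$; in particular $|Q_z^{n}(w)|\to\infty$, whence $G_z(w)=\lim_n d^{-n}\log^{+}|Q_z^{n}(w)|=+\infty$ on $W_R$. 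For a general $(z,w)\in A_f$ choose $m$ with $f^{m}(z,w)\in W_R$; then $Q_z^{n}(w)=Q_{p^{m}(z)}^{\,n-m}(Q_z^{m}(w))$ for $n\geq m$, and dividing by $d^{n}$ and letting $n\to\infty$ yields $G_z(w)=d^{-m}G_{p^{m}(z)}(Q_z^{m}(w))=+\infty$.

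For $\tilde{G}_z\leq\alpha G_p$ on $B_f$, with $\tilde{G}_z=\limsup_{n\to\infty} d^{-n}\log^{+}|Q_z^{n}|$: let $(z,w)\in B_f=A_p\times\mathbb{C}\setminus A_f$. By the definition of $A_f$ one has $f^{n}(z,w)\notin W_R$ for all $n\geq 0$. Since $z\in A_p$, $|p^{n}(z)|>R$ for all large $n$, so the only way $f^{n}(z,w)=(p^{n}(z),Q_z^{n}(w))$ can fail to lie in $W_R$ is $|Q_z^{n}(w)|\leq R\,|p^{n}(z)|^{\alpha}$. As $\alpha>0$ and $|p^{n}(z)|\to\infty$, the right-hand side exceeds $1$ eventually, so by monotonicity of $\log^{+}$ we get $\log^{+}|Q_z^{n}(w)|\leq\log R+\alpha\log|p^{n}(z)|$ for all large $n$; dividing by $d^{n}$ and taking $\limsup$ gives $\tilde{G}_z(w)\leq\alpha G_p(z)$.

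The computations are short once Lemma~\ref{delta = d: main lem} is available, so the only genuinely delicate points are bookkeeping ones: justifying that the functional identity $G_{p(z)}(q_z(w))=d\,G_z(w)$ may be iterated when the common value is $+\infty$ (which I avoid by working directly with $Q_z^{n}=Q_{p^{m}(z)}^{\,n-m}\circ Q_z^{m}$), and checking on $B_f$ that the bound $|p^{n}(z)|>R$ for large $n$ is precisely what converts ``$f^{n}(z,w)\notin W_R$'' into the fibrewise estimate $|Q_z^{n}(w)|\leq R|p^{n}(z)|^{\alpha}$. I also want to pass from $\log^{+}$ to $\log$ only once the relevant quantity is known to exceed $1$ — on $W_R$ because $|Q_z^{n}(w)|\to\infty$, and on $B_f$ because $\alpha>0$ and $|p^{n}(z)|\to\infty$.
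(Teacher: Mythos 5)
Your proof is correct and follows essentially the same route as the paper's: iterate the two-sided estimate from Lemma~\ref{delta = d: main lem} along the orbit in $W_R$, transport to $A_f$ and $B_f$ via the cocycle identity. The only cosmetic difference is that you control the increments $a_{n+1}-a_n$ directly using $d^{-n}\log|p^n(z)|\to G_p(z)>0$, whereas the paper telescopes with the cruder bound $|p^n(z)|>(r_0|z|)^{d^n}$; both work, and your care about when $\log^{+}$ may be replaced by $\log$ on $B_f$ is a minor tightening of a point the paper leaves implicit.
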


\begin{proof}
By Lemma {\rmfamily \ref{delta = d: main lem}},
there exists a positive constant $r < 1$ such that
$| q(z,w) | > r |z^{\gamma} w^d|$ on $W_R$.
Since $p(z) \sim z^{d}$,
there exists a positive constant $r_0 < 1$ such that
$|p(z)| > r_0 |z|^d$
and so $|p^n(z)| > |r_0 z|^{d^n}$ if $|z| > R$.
Using these inequalities inductively, we get 
\[
|Q_z^n(w)| > r^{1 + d + \cdots + d^{n-1}} |(r_0 z)^{n \gamma d^{n-1}} w^{d^n}|
\]
and so
\[
\dfrac{1}{d^n} \log |Q_z^n(w)| 
> \log r + \dfrac{n \gamma}{d} \log |r_0 z| + \log |w|, 
\]
which tends to $\infty$ as $n \to \infty$.
Therefore, $G_z = \infty$ on $W_R$, which extends to $A_f$.

Let $(z,w)$ be a point in $B_f$.
Then $f^n(z,w)$ never belong to $W_R$
and so $|Q_z^n(w)| < R |p^n(z)|^{\alpha}$.
Hence $\tilde{G}_z (w) \leq \alpha G_p(z)$ on $B_f$.
\end{proof}

The existence of $G_z$ on $B_f$ is still unclear.
We exhibit three examples that relate to this problem.

\begin{ex}
For any positive integer $s$,
let $f = f_s = (z^2, z(w^2 - z^s) + z^{2s})$,
which is conjugate to $f_1 = (z^2, zw^2)$ by $\pi = (z, w + z^s)$.
Then $\alpha = s$ and 
$G_z = \alpha \log |z|$ on $B_f = \{ |z| > 1, w = z^s \}$.
\end{ex}

\begin{ex}
Let $f = (z^r, z^{\gamma} (w^r - z^s) + z^s)$.
Then $\alpha = s/r$ and $G_z = \alpha \log |z|$ 
on $\{ |z| > 1, w^r = z^s \} \subset B_f$.
\end{ex}

\begin{ex}
Let $f = (z^2, zw^2 + z^2w)$.
Then $\alpha = 1$ and 
$G_z = 0$ on $\{ w = 0 \} \subset B_f$. 
\end{ex}

Let us return to the statement on the Green function
in the remaining case $\alpha \leq 0$.

\begin{theorem}
Let $\delta = d$ and $\gamma \neq 0$. 
If $\alpha \leq 0$, then 
$G_z$ is $\infty$ on $A_f$ and $0$ on $B_f$.
\end{theorem}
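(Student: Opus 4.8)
The strategy is to reuse the machinery already developed for the case $\alpha>0$ in Theorem \ref{delta = d: main thm1}, observing that only the estimate over $B_f$ needs to change, while the estimate over $A_f$ is insensitive to the sign of $\alpha$. First I would note that Lemma \ref{delta = d: main lem} and the inequality $|q(z,w)|>r|z^{\gamma}w^d|$ on $W_R$ hold regardless of the sign of $\alpha$, so the inductive argument in the proof of Theorem \ref{delta = d: main thm1} gives verbatim
\[
\frac{1}{d^n}\log|Q_z^n(w)| > \log r + \frac{n\gamma}{d}\log|r_0 z| + \log|w|
\]
on $W_R$, where $r,r_0\in(0,1)$ and $|p^n(z)|>|r_0 z|^{d^n}$ for $|z|>R$. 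Since $\gamma\geq 1$ and $|r_0 z|>1$ on a slightly shrunken $W_R$ (we may enlarge $R$ so that $r_0 R>1$), the right-hand side tends to $+\infty$, so $G_z=\infty$ on $W_R$; the functional equation $G_{p^n(z)}(Q_z^n(w))=d^n G_z(w)$ then propagates this to all of $A_f=\bigcup_{n\geq 0}f^{-n}(W_R)$.

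For the complementary set $B_f=A_p\times\mathbb{C}-A_f$, the point is that when $\alpha\leq 0$ the upper bound $|Q_z^n(w)|<R|p^n(z)|^{\alpha}$ — which holds for $(z,w)\in B_f$ because $f^n(z,w)$ never enters $W_R$ — now works in our favour. Indeed $\alpha\leq 0$ and $|p^n(z)|\to\infty$ for $z\in A_p$ together force $|p^n(z)|^{\alpha}\leq 1$ eventually (or at worst bounded; one should be slightly careful when $|p^n(z)|$ is close to $1$, but for $z\in A_p$ one has $|p^n(z)|\to\infty$, so $|p^n(z)|^{\alpha}\to 0$ if $\alpha<0$ and $\equiv 1$ if $\alpha=0$). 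Hence $\log^+|Q_z^n(w)|\leq \log^+ R=\log R$ for all large $n$, so
\[
G_z(w)=\lim_{n\to\infty}\frac{1}{d^n}\log^+|Q_z^n(w)| \leq \lim_{n\to\infty}\frac{\log R}{d^n}=0,
\]
and since $G_z\geq 0$ by definition, $G_z=0$ on $B_f$.

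The one genuinely delicate point — and the step I expect to be the main obstacle — is making sure that $G_z$ exists (as a limit, not merely a limsup) everywhere on $B_f$, i.e. that the liminf is also $0$; the upper estimate above only controls the limsup. But this is exactly where the hypothesis $\gamma\neq 0$ combined with $\alpha\leq 0$ is used decisively: the bound $\log^+|Q_z^n(w)|\leq\log R$ is a genuine pointwise bound (valid for all $n$ beyond the threshold where $|p^n(z)|\geq R^{1/|\alpha|}$ when $\alpha<0$, or for all $n$ when $\alpha=0$), so both the liminf and limsup of $d^{-n}\log^+|Q_z^n(w)|$ are squeezed between $0$ and $d^{-n}\log R\to 0$. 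Thus the limit exists and equals $0$ on $B_f$, and equals $\infty$ on $A_f$, completing the proof. A minor bookkeeping issue is the behaviour for those $z\in A_p$ with $|z|\leq 1$ initially; since $|p^n(z)|\to\infty$ one simply waits for $n$ large enough that $|p^n(z)|>\max\{R,R^{1/|\alpha|}\}$ and runs the argument from that iterate, using the functional equation to transfer the conclusion back.
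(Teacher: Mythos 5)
Your argument is essentially the paper's argument, which the paper itself only sketches (``The claim $G_z=0$ on $B_f$ follows from the definition of $B_f$ and the assumption $\alpha\leq 0$''): you reuse Lemma \ref{delta = d: main lem} and the inductive lower estimate from Theorem \ref{delta = d: main thm1} for $A_f$, and for $B_f$ you observe that $f^n(z,w)\notin W_R$ together with $|p^n(z)|\to\infty$ forces $|Q_z^n(w)|\leq R|p^n(z)|^{\alpha}$ for $n$ large, which squeezes $d^{-n}\log^{+}|Q_z^n(w)|$ to $0$ from both sides. That reasoning is correct and matches the paper for finite $\alpha$.

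There is one genuine gap. The hypothesis $\alpha\leq 0$ includes the degenerate value $\alpha=-\infty$, which the paper assigns precisely when $q(z,w)=b(z)w^d$ (see Section 6.1), and the paper's proof explicitly splits into the cases $q\neq b(z)w^d$ and $q=b(z)w^d$, handling the latter by the direct formula of Proposition \ref{delta = d: q = b(z) w^d}. Your estimate $|Q_z^n(w)|<R|p^n(z)|^{\alpha}$, and the threshold ``$|p^n(z)|\geq R^{1/|\alpha|}$'' which you use to control $\log^{+}|Q_z^n(w)|$, are written for a finite exponent and are not meaningful when $\alpha=-\infty$; in that case $W_R=\{|z|>R,\,w\neq 0\}$, and ``$f^n(z,w)\notin W_R$ with $|p^n(z)|>R$'' actually means $Q_z^n(w)=0$ identically for large $n$. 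So the $B_f$ part for $q=b(z)w^d$ needs a separate (short) argument — either the observation just made, or the explicit computation $d^{-n}\log|Q_z^n(w)|=\log|w|+\sum_{j=0}^{n-1}d^{-(j+1)}\log|b(p^j(z))|$ from Proposition \ref{delta = d: q = b(z) w^d} — rather than the inequality you invoke. The $A_f$ half of your argument does survive in this case, since Lemma \ref{delta = d: main lem} and the invariance of $W_R$ still hold, but you should say so explicitly.
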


\begin{proof}
If $q(z,w) \neq b(z)w^d$, then $\alpha > - \infty$.
The proof of the claim $G_z = \infty$ on $A_f$ is the same as 
the proof of Theorem {\rmfamily \ref{delta = d: main thm1}}.
The claim $G_z = 0$ on $B_f$ follows from
the definition of $B_f$ and the assumption $\alpha \leq 0$.
If $q(z,w) = b(z)w^d$, then $\alpha = - \infty$ 
and this claim follows from the direct calculation;
see the proof of 
Proposition {\rmfamily \ref{delta = d: q = b(z) w^d}} below 
for detail.
\end{proof}

More precisely, 
$B_f$ consists of infinitely many lines if $q = b(z)w^d$.

\begin{pro}\label{delta = d: q = b(z) w^d}
Let $\delta = d$ and $\gamma \neq 0$. 
If $q(z,w) = b(z) w^d$, then
$G_z$ is $\infty$ on $A_f$ and $0$ on $B_f$.
Moreover, 
$B_f$ coincides with the union of 
the preimages of $\{ z \in A_p, w = 0 \}$ under $f$,
which is equal to
\[
\bigcup_{n \geq 0} p^{-n}(b^{-1}(0) \cap A_p) \times \mathbb{C} 
\cup \{ z \in A_p, w = 0 \}.
\]
\end{pro}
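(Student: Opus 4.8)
The plan is to exploit the fact that, when $q(z,w)=b(z)w^{d}$, the iterates of $f$ are completely explicit: an immediate induction gives
\[
Q_z^{n}(w)=\Bigl(\prod_{j=0}^{n-1}b\bigl(p^{j}(z)\bigr)^{d^{n-1-j}}\Bigr)w^{d^{n}}.
\]
First I would record this formula, note that here $\alpha=-\infty$, so $W_R=\{|z|>R\}\times(\mathbb{C}-\{0\})$ for $R>1$, and draw two consequences from it: $Q_z^{n}(w)=0$ exactly when $w=0$ or $b(p^{j}(z))=0$ for some $0\le j\le n-1$, and — since $q_z(0)=0$ — the zero set of $Q_z^{n}$ increases with $n$. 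I will also use the standard complete invariance $p^{-1}(A_p)=A_p$.

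Second, I would carry out the direct computation referred to in the proof of the preceding theorem to get $G_z\equiv\infty$ on $W_R$. Since $b$ is monic of degree $\gamma\ge1$ and $p(z)\sim z^{d}$, for $R$ large there are constants $0<r,r_{0}<1$ with $|b(z)|>r|z|^{\gamma}$ and $|p^{n}(z)|>|r_{0}z|^{d^{n}}$ on $\{|z|>R\}$, the latter by the same induction used in the proof of Theorem~\ref{delta = d: main thm1}. Plugging these into the product formula and dividing by $d^{n}$ yields, on $W_R$,
\[
\frac{1}{d^{n}}\log|Q_z^{n}(w)|\;\ge\;\log r+\frac{n\gamma}{d}\log|r_{0}z|+\log|w|,
\]
so choosing $R>1/r_{0}$ makes the right side tend to $+\infty$; hence $G_z=\infty$ on $W_R$, and therefore on $A_f=\bigcup_{n\ge0}f^{-n}(W_R)$ through the identity $G_z(w)=d^{-n}G_{p^{n}(z)}(Q_z^{n}(w))$.

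Third — the step that needs the most care — I would identify $A_f$ and $B_f$. The claim is that, for $(z,w)$ with $z\in A_p$, one has $(z,w)\in A_f$ if and only if $Q_z^{n}(w)\ne0$ for every $n$, equivalently $w\ne0$ and $b(p^{j}(z))\ne0$ for all $j\ge0$. The reverse implication is immediate: $z\in A_p$ forces $|p^{n}(z)|\to\infty$, so eventually $f^{n}(z,w)\in W_R$. For the forward implication I would use $f(W_R)\subset W_R$ from Lemma~\ref{delta = d: main lem}: once $f^{N}(z,w)\in W_R$, the whole forward orbit stays in $W_R$, so $Q_z^{n}(w)\ne0$ for all $n\ge N$; were $b(p^{j}(z))=0$ for some $j$, then $Q_z^{n}(w)=0$ for $n>\max\{j,N\}$, a contradiction, while $w\ne0$ and the finitely many remaining conditions $b(p^{j}(z))\ne0$ already follow from $Q_z^{N}(w)\ne0$. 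Hence $B_f=A_p\times\mathbb{C}-A_f=\{(z,w):z\in A_p,\ Q_z^{n}(w)=0\text{ for some }n\}$, which by the first step equals $\{z\in A_p,\ w=0\}\cup\bigcup_{j\ge0}\{(z,w):z\in A_p,\ p^{j}(z)\in b^{-1}(0)\}$; since $\{z\in A_p:p^{j}(z)\in b^{-1}(0)\}=p^{-j}(A_p)\cap p^{-j}(b^{-1}(0))=p^{-j}(b^{-1}(0)\cap A_p)$, this is exactly the stated union, and it also equals $\bigcup_{n\ge0}f^{-n}(\{z\in A_p,\,w=0\})$ because $f^{n}(z,w)\in\{z'\in A_p,\,w'=0\}$ precisely when $z\in A_p$ and $Q_z^{n}(w)=0$.

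Finally, for $(z,w)\in B_f$ there is an $N$ with $Q_z^{n}(w)=0$ for all $n\ge N$, so $\log^{+}|Q_z^{n}(w)|=0$ for $n\ge N$ and therefore $G_z(w)=\lim_{n}d^{-n}\log^{+}|Q_z^{n}(w)|=0$; together with the second step this completes the proof. The one genuinely delicate point is the forward direction of the $A_f$ characterization, where the invariance $f(W_R)\subset W_R$ is exactly what prevents $b$ from vanishing along the tail of the base orbit of a point of $A_f$; everything else is bookkeeping with the explicit product formula and the convention $\log^{+}0=0$.
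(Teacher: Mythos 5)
Your proof is correct, and you use the same key ingredient as the paper (the explicit product formula for $Q_z^n$ when $q=b(z)w^d$), but you take a noticeably different route to the conclusion. The paper's proof is a one-liner: after writing $f^n(z,w)=(p^n(z),B_n(z)w^{d^n})$ it takes logarithms exactly, obtaining
\[
\frac{1}{d^n}\log|Q_z^n(w)| = \log|w| + \sum_{j=0}^{n-1}\frac{1}{d^{j+1}}\log|b(p^j(z))|,
\]
and then observes directly that for $z\in A_p$ the $j$-th summand is comparable to $\gamma G_p(z)/d$ (because $\deg p=d$ and $\deg b=\gamma\ge 1$), so the partial sums tend to $+\infty$ unless some $b(p^j(z))$ vanishes, and once a factor vanishes all subsequent $Q_z^n(w)$ are identically zero, giving $G_z=0$ there. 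This directly shows $G_z=\infty$ on $\{z\in A_p:\ b(p^j(z))\ne0\ \forall j,\ w\ne0\}$ without ever passing through $W_R$ or the functional equation. You instead route the argument through $W_R$: you re-derive the lower estimate from the proof of the preceding theorem to get $G_z=\infty$ on $W_R$, pull it back to $A_f$ via $G_z=d^{-n}G_{p^n(z)}\circ Q_z^n$, and then spend the bulk of your effort carefully proving that $A_f$ coincides with the set where $Q_z^n$ never vanishes. That characterization is the part the paper leaves implicit, so your proof is more explicit about the ``Moreover'' clause, at the cost of being considerably longer for the first clause. Both arguments are sound; the paper's exact logarithmic identity makes the divergence transparent in one step, while your estimate-plus-invariance route matches the template used elsewhere in the paper for the non-explicit cases.
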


\begin{proof}
Let $f(z,w) = (p(z), b(z)w^d)$.
Then $f^n(z,w) = (p^n(z), B_n (z) w^{d^n})$,
where $B_n (z) = b(p^{n-1}(z)) \cdots b(p(z))^{d^{n-2}} b(z)^{d^{n-1}}$.
Hence 
\[
\dfrac{1}{d^n} \log |Q_z^n(w)| 
= \log |w| + \sum_{j = 0}^{n - 1} \dfrac{1}{d^{j+1}} \log |b(p^j(z))|.
\]
This finite sum tends to $\infty$ as $n \to \infty$
unless $b(p^j(z)) = 0$ for some $j \geq 0$,
since $\deg p = d$.
\end{proof}

Combining two theorems above, we get the following corollary.

\begin{cor}
If $\delta = d$ and $\gamma \neq 0$, then
\[
G_f^{\alpha} (z,w) =
\begin{cases}
\infty & \text{ on } A_f \\
\max \{ \alpha, 0 \} G_p(z) & \text{ on } B_f.
\end{cases}
\]
\end{cor}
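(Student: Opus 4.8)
The plan is to unwind $G_f^\alpha$ into a base contribution and a fiber contribution and then feed in Theorem~\ref{delta = d: main thm1} and the subsequent theorem (together with Proposition~\ref{delta = d: q = b(z) w^d} in the degenerate subcase), using crucially that $\delta=d$. With the norm $|(z,w)|_\alpha=\max\{|z|^{\max\{\alpha,0\}},|w|\}$ and $\lambda=\max\{\delta,d\}=d$,
\[
\frac{1}{d^n}\log^+|f^n(z,w)|_\alpha
=\max\left\{\,\max\{\alpha,0\}\,\frac{1}{d^n}\log^+|p^n(z)|,\ \frac{1}{d^n}\log^+|Q_z^n(w)|\,\right\}.
\]
Since $\delta=d$, the first entry of the bracket converges to $\max\{\alpha,0\}\,G_p(z)$ everywhere on $A_p\times\mathbb{C}$, so the corollary reduces to controlling $d^{-n}\log^+|Q_z^n(w)|$ on $A_f$ and on $B_f$, which is exactly the content of the two theorems just proved.

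On $A_f$: in all subcases we established $G_z\equiv\infty$, i.e.\ $d^{-n}\log^+|Q_z^n(w)|\to\infty$ — on $W_R$ by the inductive estimate in the proof of Theorem~\ref{delta = d: main thm1}, and then on all of $A_f$ via $G_z(w)=d^{-n}G_{p^n(z)}(Q_z^n(w))$. Since the base entry stays bounded, the maximum tends to $+\infty$ and $G_f^\alpha=\infty$ on $A_f$.

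On $B_f$ I would split on the sign of $\alpha$. If $\alpha\le 0$ (including $\alpha=-\infty$), then $\max\{\alpha,0\}=0$, the base entry disappears, and $G_f^\alpha=\lim_n d^{-n}\log^+|Q_z^n(w)|=G_z$, which equals $0$ on $B_f$ by the theorem covering $\alpha\le 0$ (resp.\ Proposition~\ref{delta = d: q = b(z) w^d} when $q=b(z)w^d$); hence $G_f^\alpha=0=\max\{\alpha,0\}G_p$ there. If $\alpha>0$, the point is that the existence of $G_z$ on $B_f$ is not known, so I avoid $G_z$ entirely: a point of $B_f$ has orbit missing $W_R$, so $|Q_z^n(w)|<R|p^n(z)|^\alpha$ for every $n$, whence $\limsup_n d^{-n}\log^+|Q_z^n(w)|\le\alpha G_p(z)$ (this is the estimate $\tilde G_z\le\alpha G_p$ of Theorem~\ref{delta = d: main thm1}), while the base entry alone already gives $\liminf_n d^{-n}\log^+|f^n(z,w)|_\alpha\ge\alpha G_p(z)$. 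Squeezing, the limit exists and equals $\alpha G_p(z)=\max\{\alpha,0\}G_p(z)$.

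The one genuinely delicate step is this last squeeze in the case $\alpha>0$ on $B_f$: the temptation is to write $G_f^\alpha=\max\{\alpha G_p,G_z\}$ and read off the answer, but $G_z$ is not known to exist on $B_f$, so instead one must play the trivial lower bound coming from the base dynamics against the upper bound $\tilde G_z\le\alpha G_p$. Everything else is bookkeeping: checking that $d^{-n}\log R$ and the replacement of $\log^+$ by $\log$ (legitimate once $|p^n(z)|>1$, hence for large $n$ since $z\in A_p$) contribute only $o(1)$, and that the admissible variants of $|(z,w)|_\alpha$ allowed in the paper differ by quantities that vanish upon dividing by $d^n$.
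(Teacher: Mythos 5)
Your proof is correct and follows the route the paper leaves implicit when it says ``Combining two theorems above.'' The decomposition $d^{-n}\log^+|f^n|_\alpha=\max\{\max\{\alpha,0\}\,d^{-n}\log^+|p^n|,\,d^{-n}\log^+|Q_z^n|\}$ is exactly what must be behind the paper's one-line argument, and you correctly identify the one real subtlety: on $B_f$ with $\alpha>0$ the existence of $G_z$ is open, so one cannot write $G_f^\alpha=\max\{\alpha G_p,G_z\}$ but must instead squeeze the trivial lower bound $\alpha G_p$ from the base against the upper bound $\tilde G_z\le\alpha G_p$ of Theorem~\ref{delta = d: main thm1}.
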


The dynamics of $\tilde{f}$ is similar to 
that mentioned in the previous section. 
The set $A_{\tilde{f}}$ is included in the attracting basin of $p_{\infty}$, 
and $B_{\tilde{f}}$ is the attracting basin of $I_{\tilde{f}} - \{ p_{\infty} \}$.
Hence the same claim as
Proposition {\rmfamily \ref{delta > d: Fatou and Julia}} holds
except the description of $J_{\tilde{f}}$ in terms of a Green function,
for which we use 
the Green function in Corollary {\rmfamily \ref{delta = d; cor2}} below 
instead of $G_z^{\alpha}$. 

Now we show the existence of
other Green functions
that are locally bounded on $\mathbb{C}^{2}$. 

\begin{theorem}
If $\delta = d$ and $\gamma \neq 0$, then
\[
\lim_{n \to \infty} \frac{1}{n \gamma d^{n-1}} \log |Q_z^n(w)| = G_p(z)
\text{ on } A_f.
\]
\end{theorem}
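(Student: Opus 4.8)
The plan is to establish the asymptotics of $Q_z^n(w)$ first on $W_R$, where Lemma~\ref{delta = d: main lem} applies, and then to spread it to all of $A_f$ by the cocycle identity $Q_z^n = Q_{p^m(z)}^{\,n-m}\circ Q_z^m$. On $W_R$ Lemma~\ref{delta = d: main lem} furnishes constants $0<r_1<1<r_2$ with $r_1|z^\gamma w^d|<|q(z,w)|<r_2|z^\gamma w^d|$, together with $f(W_R)\subset W_R$. Iterating this two-sided bound along an orbit, by the same induction as in the proof of Theorem~\ref{delta = d: main thm1} but now retaining both inequalities, gives for every $(z,w)$ in $W_R$
\[
\left| \frac{1}{d^n}\log|Q_z^n(w)| - \log|w| - \sum_{j=0}^{n-1}\frac{\gamma}{d^{j+1}}\log|p^j(z)| \right| \leq \frac{\log r}{d-1},
\]
where $\log r=\max\{-\log r_1,\log r_2\}$. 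Since $W_R$ is $f$-invariant and $|z|>R>1$ on $W_R$, one has $|p^j(z)|>1$ for all $j\geq 0$, so positive parts are harmless and $W_R\subset A_p\times\mathbb{C}$.

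The point now is that, because $\delta=d$, the tail sum grows linearly in $n$ and reproduces $G_p$. Writing $u_j(z)=d^{-j}\log^+|p^j(z)|$, which converges to $G_p(z)$ for $z\in A_p$, we have
\[
\sum_{j=0}^{n-1}\frac{\gamma}{d^{j+1}}\log|p^j(z)| = \frac{\gamma}{d}\sum_{j=0}^{n-1}u_j(z) = \frac{n\gamma}{d}\left(\frac1n\sum_{j=0}^{n-1}u_j(z)\right),
\]
and the Ces\`aro average of $u_j(z)$ again tends to $G_p(z)$. Dividing the displayed estimate on $W_R$ through by $n\gamma d^{n-1}/d^{n}=n\gamma/d$ therefore yields, on $W_R$,
\[
\frac{1}{n\gamma d^{n-1}}\log|Q_z^n(w)| = G_p(z) + \frac{d\log|w|}{n\gamma} + \left(\frac1n\sum_{j=0}^{n-1}u_j(z)-G_p(z)\right) + O\!\left(\tfrac1n\right),
\]
so that $\lim_{n\to\infty}(n\gamma d^{n-1})^{-1}\log|Q_z^n(w)| = G_p(z)$ for $(z,w)\in W_R$.

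Finally I would extend the conclusion from $W_R$ to $A_f$. Given $(z,w)\in A_f$, choose $m$ with $(z_m,w_m):=f^m(z,w)\in W_R$; then $Q_z^n(w)=Q_{z_m}^{\,n-m}(w_m)$ for $n\geq m$, and $G_p(z_m)=d^m G_p(z)$ (and $z\in A_p$ since $A_p$ is backward invariant). Applying the $W_R$ limit at $(z_m,w_m)$ and bookkeeping the normalizing factors gives
\[
\frac{1}{n\gamma d^{n-1}}\log|Q_z^n(w)| = \frac{n-m}{n}\cdot\frac{1}{d^m}\cdot\frac{\log|Q_{z_m}^{\,n-m}(w_m)|}{(n-m)\gamma d^{\,n-m-1}} \xrightarrow[n\to\infty]{} \frac{1}{d^m}\,G_p(z_m) = G_p(z),
\]
as required.

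I expect the only delicate point to be the passage $u_j(z)=d^{-j}\log^+|p^j(z)|\to G_p(z)$ and the resulting control of its Ces\`aro average. This is classical, obtained by telescoping the uniform bound $\bigl|d^{-1}\log^+|p(\zeta)|-\log^+|\zeta|\bigr|\leq\mathrm{const}$, which holds precisely because $\deg p=d$; and it is exactly here that the hypothesis $\delta=d$ enters decisively, since equality of degrees is what forces $\sum_{j<n}d^{-j-1}\log|p^j(z)|$ to be of order $n$ and hence produces the normalization $n\gamma d^{n-1}$.
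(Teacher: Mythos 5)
Your proof is correct and follows essentially the same route as the paper: iterate the two-sided bound from Lemma~\ref{delta = d: main lem} on the invariant region $W_R$, observe that after normalizing by $n\gamma d^{n-1}$ the dominant term is the Ces\`aro average of $d^{-j}\log|p^j(z)|\to G_p(z)$, and then transfer from $W_R$ to $A_f$ via $Q_z^n=Q_{p^m(z)}^{\,n-m}\circ Q_z^m$ and $G_p(p^m(z))=d^mG_p(z)$. The paper phrases the $W_R$ step as separate upper and lower estimates giving $\limsup\le G_p\le\liminf$, but the underlying computation is identical.
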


\begin{proof}
By Lemma {\rmfamily \ref{delta = d: main lem}},
there exist constants $r_1 < 1 < r_2$ such that 
$r_1 |z^{\gamma} w^d| < |q(z,w)| < r_2 |z^{\gamma} w^d|$ on $W_R$.
By using $|q(z,w)| < r_2 |z^{\gamma} w^d|$ inductively,
we get the upper estimate
\[
|Q_z^n(w)| < r_2^{1 + d + \cdots + d^{n-1}} |p^{n-1} (z)|^{\gamma} 
|p^{n-2} (z)|^{\gamma d} \cdots |z|^{\gamma d^{n-1}} |w|^{d^n}
\]
and so
\[
\frac{1}{n \gamma d^{n-1}} \log |Q_z^n(w)| 
< \dfrac{d}{n \gamma} \left( \log r_2 + \log |w| \right) 
\]
\[
+ \dfrac{1}{n} \left\{ \log |z| + \dfrac{1}{d} \log |p(z)| 
+ \cdots + \dfrac{1}{d^{n-1}} \log |p^{n-1}(z)| \right\}.
\]
Since $d^{-n} \log|p^{n}|$ converges to $G_p$ on $A_p$,
the right hand side converges to $G_p$ as $n \to \infty$.
Thus the inequality
\[
\limsup_{n \to \infty} \frac{1}{n \gamma d^{n-1}} \log |Q_z^n(w)| \leq G_p(z)
\]
holds on $W_R$.
By the same argument in terms of $r_1 |z^{\gamma} w^d| < |q(z,w)|$,
we get the inverse inequality.
Therefore, we get the required equation on $W_R$.

A similar argument as above induces the required equation on $A_f$,
because if $f^N (z,w)$ belongs to $W_R$
then $Q_z^n(w) = Q_{p^N(z)}^{n - N} (Q_z^N(w))$ approximates to 
$p^{n-1} (z)^{\gamma} p^{n-2} (z)^{\gamma d} \cdots p^N (z)^{\gamma d^{n-N-1}} Q_z^N(w)^{d^{n-N}}$
for $n \geq N$.
\end{proof}

It follows from this theorem that 
$\lim_{n \to \infty} (n \gamma d^{n-1})^{-1} \log^{+} |Q_z^n| = 0$ on $B_f$.
Therefore,

\begin{cor}\label{delta = d; cor2}
If $\delta = d$ and $\gamma \neq 0$, then
\[
\lim_{n \to \infty} \frac{1}{n \gamma d^{n-1}} \log^{+} |f^n(z,w)| = 
\lim_{n \to \infty} \frac{1}{n \gamma d^{n-1}} \log^{+} |Q_z^n(w)|  
\]
\[
= 
\begin{cases}
G_p(z) & \text{ on } \mathbb{C}^{2} - B_f \\
0      & \text{ on } B_f.
\end{cases}
\]
\end{cor}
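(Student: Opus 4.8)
The plan is to read this corollary as a collation: it simply records, in unified form, the preceding theorem together with the estimates already obtained on $B_f$ and the classical behaviour of $G_z$ on $K_p \times \mathbb{C}$, and then performs a short reduction from $|f^n|$ to $|Q_z^n|$. Write $\mathbb{C}^2 = A_f \sqcup B_f \sqcup (K_p \times \mathbb{C})$ (recall $A_f, B_f \subset A_p \times \mathbb{C}$ with $B_f = A_p \times \mathbb{C} - A_f$), so that $\mathbb{C}^2 - B_f = A_f \sqcup (K_p \times \mathbb{C})$. Since $G_p > 0$ on $A_p$ and $G_p = 0$ on $K_p$, the value $G_p(z)$ is positive on $A_f$ and zero on $K_p \times \mathbb{C}$, so the two pieces of the claimed formula agree along the seam and it suffices to prove the normalized limit equals $G_p(z)$ on each of $A_f$ and $K_p \times \mathbb{C}$ and equals $0$ on $B_f$.

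First I would treat $(n\gamma d^{n-1})^{-1}\log^{+}|Q_z^n(w)|$ on the three pieces. On $A_f$ the preceding theorem gives $(n\gamma d^{n-1})^{-1}\log|Q_z^n(w)| \to G_p(z)$; since $G_z = \infty$ on $A_f$ we have $|Q_z^n(w)| \to \infty$, hence $\log^{+}|Q_z^n(w)| = \log|Q_z^n(w)|$ for all large $n$ and the limit is unchanged when $\log$ is replaced by $\log^{+}$. On $B_f$, if $\alpha > 0$ then $\tilde{G}_z \le \alpha G_p$ while if $\alpha \le 0$ then $G_z = 0$; in both cases $\limsup_{n} d^{-n}\log^{+}|Q_z^n(w)| < \infty$ at every point of $B_f$, so $\log^{+}|Q_z^n(w)| = O(d^n)$ and therefore $(n\gamma d^{n-1})^{-1}\log^{+}|Q_z^n(w)| = O(1/n) \to 0$ --- this is precisely the remark stated just above the corollary. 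On $K_p \times \mathbb{C}$ the Favre--Guedj theorem provides that $G_z = \lim_{n} d^{-n}\log^{+}|Q_z^n|$ exists and is finite, so again $\log^{+}|Q_z^n(w)| = O(d^n)$ and the normalized limit is $0$, matching $G_p(z) = 0$ there.

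It remains to replace $|Q_z^n|$ by $|f^n|$. Since $|f^n(z,w)| = \max\{|p^n(z)|, |Q_z^n(w)|\}$ and $\log^{+}$ is non-decreasing, $\log^{+}|f^n(z,w)| = \max\{\log^{+}|p^n(z)|, \log^{+}|Q_z^n(w)|\}$. Because $\deg p = \delta = d$, $d^{-n}\log^{+}|p^n(z)| \to G_p(z)$, so $(n\gamma d^{n-1})^{-1}\log^{+}|p^n(z)| = (d/(n\gamma))(G_p(z) + o(1)) \to 0$ everywhere on $\mathbb{C}^2$. Thus the first term in the maximum is negligible, and since each of the two normalized quantities has a limit, the limit of their maximum is the maximum of the limits, namely $\max\{0, G_p(z)\} = G_p(z)$ on $\mathbb{C}^2 - B_f$ and $\max\{0,0\} = 0$ on $B_f$. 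I do not expect a real obstacle here: the content is bookkeeping, and the only points requiring a word of justification are the passage from $\log$ to $\log^{+}$ on $A_f$ (which uses $G_z = \infty$ there) and the invocation of the finiteness of $G_z$ on $K_p \times \mathbb{C}$ to force the normalized limit to vanish on that set.
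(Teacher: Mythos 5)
Your proof is correct and follows the same route as the paper, which derives the corollary in one line from the preceding theorem (giving the limit on $A_f$) together with the observation that the $n\gamma d^{n-1}$ normalization forces the limit to vanish wherever $\log^{+}|Q_z^n| = O(d^n)$, i.e.\ on $B_f$ and on $K_p\times\mathbb{C}$. Your write-up is simply more explicit than the paper's: you spell out the decomposition $\mathbb{C}^2 = A_f \sqcup B_f \sqcup (K_p\times\mathbb{C})$, justify the passage from $\log$ to $\log^{+}$ on $A_f$, invoke Favre--Guedj for finiteness of $G_z$ on $K_p\times\mathbb{C}$, and carry out the reduction from $|f^n|$ to $|Q_z^n|$ via $(n\gamma d^{n-1})^{-1}\log^{+}|p^n| \to 0$ --- all of which the paper leaves to the reader.
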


Finally, we prove the existence of $G$ using
Lemma {\rmfamily \ref{delta = d: main lem}}.
By definition,
$G(f^n(z,w)) = d^n G(z,w) + n \gamma d^{n-1} G_p(z)$
if it exists.

\begin{theorem}
The limit $G$ is defined, continuous and pluriharmonic on $A_f$.
Moreover, 
$G = \log|w| + o(1)$ on $W_R$,
$G \sim \log |w|$ as $w \to \infty$ for fixed $z$ in $A_p$,
and $G$ tends to $- \infty$ as $(z,w)$ in $A_f$ tends to 
any point in $\partial A_f - J_p \times \mathbb{C}$. 
\end{theorem}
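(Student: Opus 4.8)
The plan is to follow the scheme used for Theorems~\ref{delta > d: main thm} and \ref{delta < d: main thm}, the new feature being that the normalization $p^n(z)^{n\gamma/d}$ carries the index $n$ in its exponent; this is exactly what turns the limiting value at $\partial A_f$ from $0$ into $-\infty$. Put
\[
G_n(z,w) = \frac{1}{d^n}\log|Q_z^n(w)| - \frac{n\gamma}{d^{n+1}}\log|p^n(z)|,
\]
so $G_0(z,w) = \log|w|$ and $G = \lim_n G_n$ wherever this limit exists. Since $f(W_R)\subset W_R$ forces $|Q_z^n(w)| > R|p^n(z)|^\alpha > 0$ and $|p^n(z)| > R > 0$ on $W_R$ (and likewise on $\{|z|>R\}\times(\mathbb{C}\setminus\{0\})$ in the case $q = b(z)w^d$, where $\alpha = -\infty$), each $G_n$ is continuous and pluriharmonic there. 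First I would establish that $G_n \to G$ uniformly on $W_R$.

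For this I would use Lemma~\ref{delta = d: main lem} as in the earlier proofs: since $f^n(z,w)\in W_R$, one has $r_1 < |Q_z^{n+1}(w)| / \bigl(|p^n(z)|^\gamma\,|Q_z^n(w)|^d\bigr) < r_2$ for constants $0 < r_1 < 1 < r_2$, while $p(z)\sim z^d$ gives $\bigl|\log(|p^{n+1}(z)|/|p^n(z)|^d)\bigr| \le \log r_0$ on $\{|z|>R\}$. Telescoping the $\log|p^n(z)|$-terms, with $\log r = \max\{-\log r_1,\log r_2\}$, one finds
\[
\bigl|G_{n+1}(z,w) - G_n(z,w)\bigr| \le \frac{\log r}{d^{n+1}} + \frac{(n+1)\gamma\log r_0}{d^{n+2}}
\]
on $W_R$. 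The extra factor $n+1$ — absent in the cases $\delta\ne d$ — is the only novelty, and it is harmless because the right-hand side is still summable. Hence $G$ is continuous and pluriharmonic on $W_R$, and summing the bound gives $|G(z,w)-\log|w|| \le C_R$ on $W_R$ with $C_R\to 0$ as $R\to\infty$, that is, $G = \log|w| + o(1)$ on $W_R$. As in Theorems~\ref{delta > d: main thm} and \ref{delta < d: main thm}, the identity $G(z,w) = d^{-m}\bigl(G(f^m(z,w)) - m\gamma d^{m-1}G_p(z)\bigr)$ — valid as soon as $f^m(z,w)\in W_R$, and independent of such $m$ via the relation $G\circ f = dG + \gamma G_p$ on $W_R$ — extends $G$ to all of $A_f$, and continuity and pluriharmonicity propagate since $f$ is holomorphic and $G_p$ is continuous and pluriharmonic on $A_p$. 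For fixed $z\in A_p$, once $|p^m(z)|>R$ the point $f^m(z,w)$ lies in $W_R$ for $|w|$ large; there $G(f^m(z,w)) = \log|Q_z^m(w)| + O(1)$, and since $Q_z^m(\cdot)$ has degree $d^m$ on a non-degenerate fibre one gets $G(z,w) = \log|w| + O(1)$, hence $G\sim\log|w|$ as $w\to\infty$.

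It remains to treat the boundary. Let $E = \{|w| = R|z|^\alpha,\ |z|>R\}\subset\partial W_R$. On $f^{-n}(E)$ the point $f^n(z,w)$ lies in $E$, so $|G - \log|w|| \le C_R$ yields $\bigl|G(f^n(z,w)) - \log R - \alpha\log|p^n(z)|\bigr| \le C_R$, and therefore
\[
G(z,w) = \frac{1}{d^n}\bigl(\log R + \alpha\log|p^n(z)|\bigr) + \frac{1}{d^n}\,O(C_R) - \frac{n\gamma}{d}G_p(z) \quad\text{on } f^{-n}(E).
\]
Because $A_f\subset A_p\times\mathbb{C}$ with $A_p$ open, $\partial A_f - J_p\times\mathbb{C}\subset A_p\times\mathbb{C}$; so if $(\zeta_1,\zeta_2)$ is such a boundary point then $\zeta_1\in A_p$, $G_p(\zeta_1)>0$, and $d^{-n}\log|p^n(z)|\to G_p(\zeta_1)$ uniformly on compact subsets of $A_p$, while the first term stays bounded; since $\gamma\ge 1$, the term $-\tfrac{n\gamma}{d}G_p(z)$ drives $G(z,w)$ to $-\infty$. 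To obtain the same conclusion for an arbitrary sequence of $A_f$ converging to such a point I would argue with an upper bound: if $n = n(z,w)$ is the least index with $f^n(z,w)\in W_R$, then $f^{n-1}(z,w)\notin W_R$ although $|p^{n-1}(z)|>R$ (for $(z,w)$ near the boundary point), so $|Q_z^{n-1}(w)| \le R|p^{n-1}(z)|^\alpha$, and the defining property of $\alpha$ — every term $z^{n_j}w^{m_j}$ of $q$ satisfies $n_j + \alpha m_j \le \gamma + \alpha d$ — gives $|Q_z^n(w)| \le C|p^{n-1}(z)|^{\gamma + \alpha d}$; hence $G(z,w) \le d^{-n}\bigl(\log|Q_z^n(w)| + C_R\bigr) - \tfrac{n\gamma}{d}G_p(z) \to -\infty$ as $n\to\infty$. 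The case $q = b(z)w^d$ is handled by the direct computation of Proposition~\ref{delta = d: q = b(z) w^d}. I expect the convergence step to be the main obstacle: the index $n$ in the exponent of $p^n(z)^{n\gamma/d}$ means $|G_{n+1} - G_n|$ is no longer purely geometric, and one must telescope against $p(z)\sim z^d$ to see that the superfluous factor is absorbed; once this is in place, the asymptotics on $W_R$ and the divergence to $-\infty$ at the boundary follow as above.
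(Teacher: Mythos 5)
Your proposal is correct and takes essentially the same approach as the paper: the same choice of $G_n(z,w) = d^{-n}\log|p^n(z)^{-n\gamma/d}Q_z^n(w)|$, the same telescoping estimate using $q(z,w)\sim z^{\gamma}w^d$ and $p(z)\sim z^d$ to get $|G_{n+1}-G_n| \lesssim d^{-(n+1)}\log r + (n+1)d^{-(n+2)}\gamma\log r_0$ on $W_R$, the same extension to $A_f$ via $G\circ f^n = d^n G + n\gamma d^{n-1}G_p$, and the same treatment of the boundary using $f^{-n}(E)$. The small supplement you add (an explicit upper-bound argument for arbitrary sequences approaching $\partial A_f - J_p\times\mathbb{C}$) is not spelled out in the paper, which relies on the convention, established for Theorems \ref{delta > d: main thm} and \ref{delta < d: main thm}, that $f^{-n}(E)$ converges to $\partial A_f - J_p\times\mathbb{C}$.
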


\begin{proof}
The proof is similar to those of Theorems
{\rmfamily \ref{delta > d: main thm}} and
{\rmfamily \ref{delta < d: main thm}}.
We first show the uniform convergence to $G$ on $W_R$.
Let $G_n = d^{-n} \log |z_n^{- n \gamma /d} w_n|$,
where $(z_n,w_n) = f^n(z,w)$.
By Lemma {\rmfamily \ref{delta = d: main lem}},
there are constants $r > 1$ and $r_0 > 1$ such that
$|w_{n+1}| < r|z_n^{\gamma} w_n^d|$ and $|z_n^d| < r_0 |z_{n+1}|$ on $W_R$.
Hence 
\[
G_{n+1} - G_n = 
\dfrac{1}{d^{n+1}} \log \left| \dfrac{w_{n+1}}{w_n^d} \cdot 
\dfrac{z_n^{n \gamma}}{z_{n+1}^{(n+1) \frac{\gamma}{d}}} \right|
< \dfrac{1}{d^{n+1}} \log \left| r \left( \dfrac{z_n^d}{z_{n+1}} \right)^{(n+1) \frac{\gamma}{d}} \right|
\]
\[
< \dfrac{1}{d^{n+1}} \log \left| r {r_0}^{(n+1) \frac{\gamma}{d}} \right|
= \dfrac{1}{d^{n+1}} \log r + \dfrac{n+1}{d^{n+1}} \cdot \dfrac{\gamma}{d} \log r_0.
\]
Therefore, $G_n$ converges uniformly to $G$ on $W_R$,
which is continuous and pluriharmonic.
By the inequality above,
\begin{equation}\label{delta = d: eq}
|G(z,w) - \log |w|| < C_R \text{ on } W_R,
\end{equation}
where the constant $C_R > 0$ converges to $0$ as $R \to \infty$.
The equation $G = d^{-n} G \circ f^n - n \gamma d^{-1} G_p$
extends the domain of $G$ from $W_R$ to $A_f$. 

Next, we show the last statement.
By inequality (\ref{delta = d: eq}),
\[
\left |G(f^n(z,w)) - \log R|p^n(z)|^{\alpha} \right| < C_R \text{ on } f^{-n}(E),
\]
where $E = \{ |w| = R|z|^{\alpha}, |z| > R \} \subset \partial W_R$.
Thus, by the identity $G \circ f^n = d^n G + n \gamma d^{n-1} G_p$,
\[
\left |G(z,w) + n \dfrac{\gamma}{d} G_p(z) - \dfrac{\log R|p^n(z)|^{\alpha}}{d^n} \right| 
< \dfrac{C_R}{d^n} \text{ on } f^{-n}(E).
\]
Therefore, the values of $G$ on $f^{-n}(E)$ tend to $- \infty$ as $n \to \infty$. 
\end{proof}

This theorem guarantees that
$B_f \cap ( \{ z \} \times \mathbb{C} ) \neq \emptyset$
for any $z$ in $A_p$. 
Since $G = - \infty$ on $B_f$, 
it follows that $G$ is defined and plurisubharmonic on $A_p \times \mathbb{C}$, 
and $A_f = \{ (z,w) \in A_p \times \mathbb{C} : G > - \infty \}$. 
The identity $G \circ f^n = d^n G + n \gamma d^{n-1} G_p$ induces that 
$G(z,w)$ converges to $0$ as $(z,w)$ in $A_f$ tends to the intersection of 
the closure of $\partial A_f - J_p \times \mathbb{C}$ 
and $J_p \times \mathbb{C}$.
Hence the description of the last statement is different 
from those in the case $\delta \neq d$.

\section{Degree growth} 

In this section we give 
a remark on the relationship between algebraically stability and weight growth,
the list of inequalities of degree growth in terms of $\alpha$,
and a corollary on the existence of a weighted Green function 
that is normalized by $\deg (f^n)$.

\subsection{Algebraically stability and Weight growth}

Assume that $f$ is not a polynomial product.
We saw that if $\gamma \neq 0$ or $\delta > d$,
then $\alpha$ is a positive rational number,
$f$ extends to an algebraically stable map on $\mathbb{P} (r,s,1)$,
where $s/r = \alpha$,
and the weight of $Q_z^n$ is $\alpha \lambda^n$.   
Hence the weight of $f^n$ is equal to $\lambda^n$
if we define it as the maximum of 
the degree of $p^n$ and the weight of $Q_z^n$ times $\alpha^{-1}$;
namely,
$\text{weight} (f^n) = ( \text{weight} f )^n$.
This is an analogue of the well known fact:
if $\tilde{f}$ is algebraically stable on $\mathbb{P}^2$,
then $\deg (f^n) = (\deg f)^n$.

\subsection{Degree growth}
Next, we give the list of inequalities on the degree growth of $f$,
which follows from the definition of $\alpha$. 

\begin{enumerate} 
\item If $\delta > d$, then 
$\delta^n \leq \deg (f^n) \leq \max \{ \alpha, 1 \} \delta^n$.
\item If $\delta < d$ and $\alpha > 0$, then
\[
\left[ 1 + \dfrac{\gamma}{d - \delta} 
\left\{ 1 - \left( \dfrac{\delta}{d} \right)^n \right\} \right] d^n
\leq \deg (f^n)
\]
\[
\leq \left[ \max \{ \alpha, 1 \} + \max \left\{ \dfrac{1}{\alpha}, 1 \right\} 
\dfrac{\gamma}{d - \delta} \left\{ 1 - \left( \dfrac{\delta}{d} \right)^n \right\} \right] d^n.
\]
If $\delta < d$ and $\alpha \leq 0$, then 
\[
\deg (f^n) = \left[ 1 + \dfrac{\gamma}{d - \delta} 
\left\{ 1 - \left( \dfrac{\delta}{d} \right)^n \right\} \right] d^n.
\]
\item If $\delta = d$ and $\alpha > 0$, then 
\[
\left( \dfrac{\gamma}{d} n + 1 \right) d^n
\leq \deg (f^n) \leq 
\left[ \max \left\{ \dfrac{1}{\alpha}, 1 \right\} \dfrac{\gamma}{d} n + \max \{ \alpha, 1 \} \right] d^n.
\]
If $\delta = d$ and $\alpha \leq 0$, then
\[
\deg (f^n) = \left( \dfrac{\gamma}{d} n + 1 \right) d^n.
\]
\end{enumerate}

Note that it follows from the equalities above that,
if $\gamma = 0$, then 
$\lambda^n \leq \deg (f^n) \leq \max \{ \alpha, 1 \} \lambda^n$,
which was already stated in \cite{u-weight}.
In general,  
it is proved in \cite{fj-ev} that,
for any polynomial map $F$ 
which is not conjugate to a skew product 
such that $\delta = d$ and $\gamma \neq 0$,
if the dynamical degree $\lambda$ is larger than $1$, 
then there is $D \geq 1$ 
such that $\lambda^n \leq \deg (F^n) \leq D \lambda^n$.

\subsection{Another Green function}
In the final subsection
we consider the existence of the limit
\[
\lim_{n \to \infty} \frac{1}{\deg (f^n)} \log^{+} |f^n(z,w)|_{\alpha}.  
\]
If $\delta \neq d$, 
then the topological degree $\delta d$ is smaller 
than $\lambda^2$. 
In this case, the existence of the limit follows from 
Corollaries {\rmfamily \ref{delta > d; main cor}} 
and {\rmfamily \ref{delta < d; main cor}}
and the main result in \cite{bfj},
which can be restated as follows:
for a polynomial map $F$ 
whose topological degree is smaller 
than the square $\lambda^2$ of the dynamical degree,
the ratio of $\deg (F^n)$ and $\lambda^n$ 
converges to some positive number.
For the case $\delta = d$,
the existence follows if $\alpha \leq 0$
because of Corollary {\rmfamily \ref{delta = d; cor2}}
and the result above on the degree growth.
Consequently,

\begin{cor}
If $\delta \neq d$ or $\alpha \leq 0$, then
the limit of $(\deg (f^n))^{-1} \log^{+}$ $|f^n(z,w)|_{\alpha}$
is defined on $\mathbb{C}^2$.
Moreover, it is continuous on $\mathbb{C}^2$ if $\delta > d$,
on $A_p \times \mathbb{C}$ if $\delta < d$,
and on $\mathbb{C}^2 - B_f$ if $\delta = d$ and $\alpha \leq 0$.
\end{cor}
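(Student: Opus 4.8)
The plan is to derive the corollary in each admissible case by renormalizing an already-constructed Green function with the known asymptotics of $\deg(f^n)$. Accordingly I would split the argument into the case $\delta\neq d$ and the case $\delta=d$ with $\alpha\leq 0$.

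For $\delta\neq d$, the first step is that the topological degree of $f$ equals $\delta d$, which is strictly smaller than $\lambda^2=\max\{\delta,d\}^2$ precisely because $\delta\neq d$. The main result of \cite{bfj}, recalled above, then provides a positive constant $c$ with $\deg(f^n)/\lambda^n\to c$. Writing
\[
\frac{1}{\deg(f^n)}\log^{+}|f^n(z,w)|_{\alpha}=\frac{\lambda^n}{\deg(f^n)}\cdot\frac{1}{\lambda^n}\log^{+}|f^n(z,w)|_{\alpha},
\]
the existence of $G_f^{\alpha}$ makes the left-hand limit exist and equal $c^{-1}G_f^{\alpha}$. I would then invoke Corollaries \ref{delta > d; main cor} and \ref{delta < d; main cor}: if $\delta>d$ the limit is $c^{-1}\alpha G_p$, continuous on $\mathbb{C}^2$, and if $\delta<d$ the limit is $c^{-1}G_z$, continuous on $A_p\times\mathbb{C}$.

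For $\delta=d$ and $\alpha\leq 0$ I may assume $\gamma\neq 0$, since otherwise $f$ is a polynomial product or $q=w^d$, and both are dealt with directly from the explicit form of $f^n$. The first step is the observation that when $\alpha\leq 0$ the factor of $|\cdot|_{\alpha}$ built from $|z|$ is at most $1$ (also when $\alpha=-\infty$, i.e. $q(z,w)=b(z)w^d$), so that $\log^{+}|f^n(z,w)|_{\alpha}=\log^{+}|Q_z^n(w)|$. The second step uses the degree growth formula of Section 7.2, $\deg(f^n)=(\gamma n/d+1)d^n=n\gamma d^{n-1}+d^n$, whence $\deg(f^n)/(n\gamma d^{n-1})\to 1$. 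Combining this with Corollary \ref{delta = d; cor2} through
\[
\frac{1}{\deg(f^n)}\log^{+}|f^n(z,w)|_{\alpha}=\frac{n\gamma d^{n-1}}{\deg(f^n)}\cdot\frac{1}{n\gamma d^{n-1}}\log^{+}|Q_z^n(w)|
\]
shows that the limit exists, equals $G_p$ on $\mathbb{C}^2-B_f$ and $0$ on $B_f$; on $K_p\times\mathbb{C}\subset\mathbb{C}^2-B_f$ one uses that $d^{-n}\log^{+}|Q_z^n|$ converges by \cite{fg} while $d^n/\deg(f^n)\to 0$, so the value of the limit there is $0=G_p|_{K_p}$. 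Continuity on $\mathbb{C}^2-B_f=A_f\cup(K_p\times\mathbb{C})$ then follows from continuity of $G_p$ and the vanishing $G_p\equiv 0$ on $K_p$.

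Since every ingredient is already in place, I do not expect a genuinely hard analytic step; the construction is an assembly. The points that will require care are: the correct reading of $|\cdot|_{\alpha}$ when $\alpha\leq 0$ or $\alpha=-\infty$, so that the reduction to $\log^{+}|Q_z^n|$ is clean; the verification that the piecewise limit ($G_p$ versus $0$) in the case $\delta=d$ is continuous across $\partial A_f$ within $\mathbb{C}^2-B_f$; and the separate treatment of the degenerate subcases ($f$ a polynomial product, or $q=b(z)w^d$ via Proposition \ref{delta = d: q = b(z) w^d}) by direct computation rather than through Corollary \ref{delta = d; cor2}.
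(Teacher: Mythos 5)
Your proposal is correct and follows essentially the same route the paper takes: invoke the Bouckson--Favre--Jonsson degree-growth result when $\delta\neq d$ (since the topological degree $\delta d<\lambda^2$) to pass from $\lambda^{-n}$ to $(\deg f^n)^{-1}$ and then apply Corollaries \ref{delta > d; main cor} and \ref{delta < d; main cor}, and for $\delta=d$, $\alpha\leq0$ use the exact degree formula $\deg(f^n)=n\gamma d^{n-1}+d^n$ together with Corollary \ref{delta = d; cor2}. The paper only sketches this assembly; the extra details you add (the reduction of $\log^{+}|\cdot|_{\alpha}$ to $\log^{+}|Q_z^n|$ when $\alpha\leq0$, the separate check on $K_p\times\mathbb{C}$, and the disposal of the $\gamma=0$ polynomial-product subcase) are correct and fill small gaps the paper leaves implicit.
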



\end{document}